\newtheorem{thm}{Theorem}[section]
 \newtheorem{cor}{Corollary}[section]
 \newtheorem{lem}{Lemma}[section]
 \newtheorem{prop}{Proposition}[section]
 \newtheorem{defn}{Definition}[section]
\newtheorem{rem}{Remark}[section]
\begin{document}
\begin{center}
{\large{\bf The optimal decay estimates on the framework of Besov spaces for generally
dissipative systems}}
\end{center}
\begin{center}
\footnotesize{Jiang Xu}\\[2ex]
\footnotesize{Department of Mathematics, \\ Nanjing
University of Aeronautics and Astronautics, \\
Nanjing 211106, P.R.China,}\\
\footnotesize{jiangxu\underline{ }79@nuaa.edu.cn}\\

\vspace{3mm}

\footnotesize{Faculty of Mathematics, \\ Kyushu University, Fukuoka 812-8581, Japan}\\

\vspace{10mm}

\footnotesize{Shuichi Kawashima}\\[2ex]
\footnotesize{Faculty of Mathematics, \\ Kyushu University, Fukuoka 812-8581, Japan,}\\
\footnotesize{kawashim@math.kyushu-u.ac.jp}\\
\end{center}
\vspace{6mm}

\begin{abstract}
We give a new decay framework for general dissipative hyperbolic system and hyperbolic-parabolic composite system, which allow us to pay less attention on the traditional spectral analysis in comparison with previous efforts. New ingredients lie in the high-frequency and low-frequency decomposition of a pseudo-differential operator and an interpolation inequality related to homogeneous Besov spaces of negative order. Furthermore, we develop the Littlewood-Paley pointwise energy estimates and new time-weighted energy functionals to establish the optimal decay estimates on the framework of spatially critical Besov spaces for degenerately dissipative hyperbolic system of balance laws. Based on the $L^{p}(\mathbb{R}^{n})$ embedding and improved Gagliardo-Nirenberg inequality, the optimal $L^{p}(\mathbb{R}^{n})$-$L^{2}(\mathbb{R}^{n})(1\leq p<2)$ decay rates and $L^{p}(\mathbb{R}^{n})$-$L^{q}(\mathbb{R}^{n})(1\leq p<2\leq q\leq\infty)$ decay rates are further shown. Finally, as a direct application, the optimal decay rates for 3D damped compressible Euler equations are also obtained.
\end{abstract}

\noindent\textbf{AMS subject classification.} 35L60;\
35L45;\ 35F25;\ 35B40.\\
\textbf{Key words and phrases.} decay estimates; dissipative systems; Littlewood-Paley pointwise estimates; Besov spaces.

\section{Introduction}
In the following we shall consider a general $N$-component hyperbolic system of
balance laws, which is given by
\begin{equation}
U_{t}+\sum_{j=1}^{n}F^{j}(U)_{x_{j}}=G(U) \label{R-E1}
\end{equation}
with the initial data
\begin{equation}
U_{0}=U(0,x),\ \
  x\in\mathbb{R}^{n}\ (n\geq3), \label{R-E2}
\end{equation}
where $U=U(t,x)$ is the unknown $N$-vector valued function taking values in an open convex set $\mathcal{O}_{U}\subset
\mathbb{R}^{N}$ (called the state space). $F^{j}$ and $G$ are given
$N$-vector valued smooth functions on $\mathcal{O}_{U}$. The subscripts $t$ and $x_{j}$ refer to the partial derivatives with respect to $t$ and $x_{j}$, respectively.

As $G(U)\equiv0$, system (\ref{R-E1}) reduces to a system of conservation laws. As we all known, generally,
even for very good initial data, classical solutions may break down in finite
time, due to the appearance of singularities, either discontinuities or blow-up (see,
e.g., \cite{D1}).  As $G(U)\neq0$, system (\ref{R-E1}) describe a great number of non-equilibrium phenomena.
Important examples occur in the study of gas dynamics with relaxation, chemically reactive flows, radiation hydrodynamics,
traffic flows, nonlinear optics and so on. See \cite{Y3} and reference cited therein. In this case, the dissipative mechanism due
to the source term $G(U)$, even if it is sometimes partial, could prevent the formation of singularities. More precisely,
$G(U)$ has, or can be transformed by a
linear transformation into, the form
$$G(U)=\left(
 \begin{array}{c}
                                                                   0 \\
                                                                  g(U) \\
                                                                 \end{array}
                                                               \right),
$$
with $0\in \mathbb{R}^{N_{1}}, g(U)\in \mathbb{R}^{N_{2}}$, where
$N_{1}+N_{2}=N(N_{1}\neq0)$. As observed, the source term is not
present in all the components of (\ref{R-E1}).

To the best of our knowledge, the existence of an entropy and
the so-called Shizuta-Kawashima ([SK]) stability condition, which
was first formulated by \textsc{Shizuta} and the second author \cite{SK} for symmetric hyperbolic-parabolic
systems, could
guarantee the global-in-time existence of classical solutions, either in the Sobolev spaces $H^{l}(\mathbb{R}^{n})$ with higher regularity $l>1+n/2$ (see, e.g, \cite{HN}, \cite{Y})
or in the critical Besov space $B^{\sigma_{c}}_{2,1}(\mathbb{R}^{n})$ with $\sigma_{c}=1+n/2$ (see ourselves \cite{XK}). Let us first recall these efforts for the balance laws (\ref{R-E1}) briefly.

\textsc{Chen, Levermore} \& \textsc{Liu} \cite{CLL} formulated a notion of the entropy for (\ref{R-E1}),
which was a natural extension of the classical entropy due to \textsc{Godunov} \cite{G}, \textsc{Friedrichs}
and \textsc{Lax} \cite{FL} for conservation laws (i.e., $G(U)\equiv 0$). Regretfully, the dissipative
entropy was not strong enough to develop the global existence theory for (\ref{R-E1}). Subsequently,
a technical requirement was imposed on the entropy dissipation, and then the global existence of classical
solutions in a neighborhood of a constant equilibrium $\bar{U}\in \mathbb{R}^{N}$ satisfying $G(\bar{U})=0$
was carried out by \textsc{Hanouzet} and \textsc{Natalini} \cite{HN} in one space dimension and \textsc{Yong} \cite{Y}
in several space dimensions. In the two works, the [SK] condition was all time assumed. Later, the second author and \textsc{Yong} \cite{KY} removed
the technical requirement on the dissipative entropy assumed in \cite{HN,Y} and gave a perfect definition of the entropy for (\ref{R-E1}) (see Definition \ref{defn2.1}) to develop the global-in-time existence of classical solutions, see \cite{KY2}.

However, it should be pointed out that above efforts were established by the basic local-in-time existence theory of\textsc{ Kato} and \textsc{Majda} \cite{K,M} for generally symmetric hyperbolic systems, where the regularity index of spatially Sobolev spaces $H^{l}(\mathbb{R}^n)$
is usually required to be high ($l>1+n/2$). It remains a challenging open problem whether classical solutions of (\ref{R-E1}) still preserve
the global well-posedness and stability in the case of the critical regularity $\sigma_{c}=1+n/2$. To solve it, in the very recent work \cite{XK}, we have already introduced a class of mixed space-time Besov spaces (usually referred to the Chemin-Lerner spaces) which was initialed by Chemin and Lerner \cite{CL}, and developed an elementary fact that indicates the relation between homogeneous and inhomogeneous Chemin-Lerner spaces, which enable us to capture the degenerate dissipation rates generated from partial source terms and obtain the global well-posedness on the framework of spatially Besov spaces $B^{1+n/2}_{2,1}(\mathbb{R}^{n})$.

Concerning the large-time asymptotic stability of solutions,
\textsc{Umeda}, the second author and \textsc{Shizuta} \cite{UKS} first employed the pointwise energy estimates in Fourier spaces
to show the time-decay property for a general class of symmetric hyperbolic-parabolic composition system (including the symmetric hyperbolic system), provided that the initial data belong to $L^2(\mathbb{R}^{n})\cap L^p(\mathbb{R}^{n})(1\leq p<2)$, where the low-frequency and high-frequency integrals were performed respectively, based on the Hausdorff-Young inequality. Subsequently, the second author \cite{Ka} studied the corresponding nonlinear systems of a hyperbolic-parabolic composite type and obtained the decay estimates of classical solutions in $H^{l}(\mathbb{R}^{n})\cap L^p(\mathbb{R}^{n}) (l>2+n/2, 1\leq p<2)$, where he found that the overall decay rate in these norms was the heat kernel rate.
\textsc{Ruggeri} and \textsc{Serre} \cite{RS} showed that the constant equilibrium state $\bar{U}$ is time asymptotically $L^2$-stable for (\ref{R-E1}), by constructing the Lyapunov functionals. Moreover, it was proved by \textsc{Bianchini, Hanouzet} \& \textsc{Natalini} \cite{BHN} that classical solutions approach the constant equilibrium state in the $L^{p}$-norm at the rate $O(t^{-\frac{n}{2}(1-\frac{1}{p})})$, as $t\rightarrow\infty$, for $p\in[\min\{n,2\},\infty]$. Due to the strong quasilinearity of (\ref{R-E1}), however, the optimal $L^p$ decay rates are valid only for a few derivatives of solutions.
To make up for the deficiency, the second author and \textsc{Yong} \cite{KY2} employed the time-weighted energy functional which was first developed in \cite{Ma} for compressible Navier-Stokes equations to obtain the desired $L^p$ decay estimates.

As shown by \cite{XK}, the harmonic analysis technique allow to reduce significantly the regularity requirement and establish the
global-in-time existence of classical solutions for (\ref{R-E1}) in spatially critical Besov spaces. As the continuation study,
an interesting problem is how fast does the solution decay on the new functional framework.

The paper is organized as follows. In Sect.~\ref{sec:2}, we recall the entropy and symmetrization about the hyperbolic system as well as the [SK] stability condition. Main results will also be stated in this section. In Sect.~\ref{sec:3}, we introduce the Littlewood-Paley decomposition theory and give the definition of Besov spaces as well as some useful facts in Besov spaces. Sect.~\ref{sec:4} is devoted to develop the L-P pointwise energy estimates, which deduce the optimal decay estimates for the symmetric hyperbolic system on the framework of Besov spaces. In Sect.~\ref{sec:5}, we use the modified time-weighted energy approaches related to the low-frequency and high-frequency decomposition along with interpolation inequalities to deduce the corresponding decay estimates for the nonlinear symmetric hyperbolic system effectively.  As a
direct application of our results, in Sect.~\ref{sec:6}, we obtain the decay estimates for the damped 3D compressible Euler equations. The paper will be end with two Appendixes. In Appendix A, the decay framework for linearized dissipative systems, either the damped hyperbolic system or hyperbolic-parabolic composite system, will be shown, provided that the initial data belong to $L^2(\mathbb{R}^n)\cap \dot{B}^{-s}_{2,\infty}(\mathbb{R}^n)(s>0)$. It should be a great improvement in comparison with the framework in \cite{UKS} which was given by the second author etc. thirty years ago, since $L^{1}(\mathbb{R}^n)\hookrightarrow \dot{B}^{0}_{1,\infty}(\mathbb{R}^n)\hookrightarrow\dot{B}^{-n/2}_{2,\infty}(\mathbb{R}^n)$. Actually, this is the main motivation for
the present work. In Appendix B, for the convenience of readers, some interpolation inequalities related to Besov spaces, for instance, $L^{p}(\mathbb{R}^{n})$ embedding and improved Gagliardo-Nirenberg-Sobolev inequality, will be presented.

\section{Entropy, [SK] condition and main results}\setcounter{equation}{0}\label{sec:2}
It is convenient to state main results of this
paper, we first review the dissipative entropy and the stability
condition. To begin with, let us
set
\begin{eqnarray*}
\mathcal{M}=\{\psi\in\mathbb{R}^{N}: \langle\psi,G(U)\rangle=0\ \
\mbox{for any}\ U\in \mathcal{O}_{U}\}.
\end{eqnarray*}
Then $\mathcal{M}$ is a subset of $\mathbb{R}^{N}$ with $\mathrm{dim}
\mathcal{M}=N_{1}$. In the discrete kinetic theory, $\mathcal{M}$ is
called the space of summational (collision) invariants. From the
definition of $\mathcal{M}$, we have
\begin{eqnarray*}
G(U)\in\mathcal{M}^{\top}(\mbox{the orthogonal complement of}\
\mathcal{M}),\ \mbox{for any}\ U\in \mathcal{O}_{U}.
\end{eqnarray*}
Furthermore, corresponding to the orthogonal decomposition
$\mathbb{R}^{N}=\mathcal{M}\oplus\mathcal{M}^{\top}$, we may write
$U\in\mathbb{R}^{N}$ as
\begin{eqnarray*}
U=\left(
    \begin{array}{c}
      U_{1} \\
      U_{2} \\
    \end{array}
  \right)
\end{eqnarray*}
such that $U\in\mathcal{M}$ holds if and only if $U_{2}=0$. We
denote by $\mathcal{E}$ the set of equilibrium state for the balance
laws (\ref{R-E1}):
\begin{eqnarray*}
\mathcal{E}=\{U\in \mathcal{O}_{U}: G(U)=0\}.
\end{eqnarray*}

\subsection{Entropy and symmetrization}
In \cite{KY}, an entropy for (\ref{R-E1}) is defined as follows.
\begin{defn}\label{defn2.1}
Let\ \ $\eta=\eta(U)$ be a smooth function defined in a convex open
set $\mathcal{O}_{U}\subset\mathbb{R}^{N}$. Then $\eta=\eta(U)$ is
called an entropy for the balance laws (\ref{R-E1}) if the following
statements hold:
\begin{itemize}
\item [$(\bullet)$] $\eta=\eta(U)$ is strictly convex in $\mathcal{O}_{U}$
in the sense that the Hessian $D^2_{U}\eta(U)$ is positive definite
for\ $U\in \mathcal{O}_{U}$;
\item [$(\bullet)$] $D_{U}F_{j}(U)(D^2_{U}\eta(U))^{-1}$ is symmetric for $U\in \mathcal{O}_{U}$ and $j=1,...,n;$
\item [$(\bullet)$] $U\in\mathcal{E}$ if and only if $(D_{U}\eta(U))^{\top}\in
\mathcal{M}$;
\item [$(\bullet)$] For $U\in\mathcal{E}$, the matrix
$D_{U}G(U)(D^2_{U}\eta(U))^{-1}$ is symmetric and nonpositive
definite, and its null space coincides with $\mathcal{M}$.
\end{itemize}
\end{defn}
\noindent Here and below, $D_{U}$ stands for the (row) gradient
operator with respect to $U$. Let $\eta(U)$ be the entropy and set
\begin{eqnarray}W(U)=(D_{U}\eta(U))^{\top}. \label{R-E3}\end{eqnarray}
It was shown in \cite{KY} that the mapping $W=W(U)$ is a
diffeomorphism from $\mathcal{O}_{U}$ onto its range
$\mathcal{O}_{W}$. Let $U=U(W)$ be the inverse mapping which is also
a diffeomorphism from $\mathcal{O}_{W}$ onto its range
$\mathcal{O}_{U}$. Then (\ref{R-E1}) can be rewritten as

\begin{eqnarray}
A^{0}(W)W_{t}+\sum^{n}_{j=1}A^{j}(W)W_{x_{j}}=H(W) \label{R-E4}
\end{eqnarray}
with $$A^{0}(W)=D_{W}U(W),$$
$$A^{j}(W)=D_{W}F^{j}(U(W))=D_{U}F^{j}(U(W))D_{W}U(W),$$
$$H(W)=G(U(W)).$$
Moreover, define
$$L(W):=-D_{W}H(W)=-D_{U}G(U(W))D_{W}U(W).$$
In terms of (\ref{R-E3}), we have
$D_{W}U(W)=D^2_{U}\eta(U(W))^{-1}$. Then it was shown in \cite{KY}
that (\ref{R-E4}) is a \textit{symmetric dissipative} system in the
sense defined as follows.

\begin{defn}\label{defn2.2}
The system (\ref{R-E4}) is called symmetric dissipative if the
following statements hold:
\begin{itemize}
\item [$(\bullet)$] $A^{0}(W)$ is symmetric and positive definite for $W\in\mathcal{O}_{W}$;
\item [$(\bullet)$] $A^{j}(W)$ is symmetric for $W\in \mathcal{O}_{W}$ and $j=1,...,n;$
\item [$(\bullet)$] $H(W)=0$ if and only if $W\in \mathcal{M}$;
\item [$(\bullet)$] For $W\in \mathcal{M}$, the matrix
$L(W)$ is symmetric and nonnegative definite, and its null space
coincides with $\mathcal{M}$.
\end{itemize}
\end{defn}

Meantime, $H(W)$ has a useful expression for any fixed constant state $\bar{W}\in\mathcal{M}$ (see \cite{KY}):
\begin{eqnarray}
H(W)=-LW+r(W), \label{R-E5}
\end{eqnarray}
where $L=L(\bar{W}),\ r(W)\in \mathcal{M}^{\top}$ for all
$W\in\mathcal{O}_{W}$ and satisfies
\begin{eqnarray}
|r(W)|\leq C|W-\bar{W}||(I-\mathcal{P})W| \label{R-E6}
\end{eqnarray}
for $W\in\mathcal{O}_{W}$ close to $\bar{W}$, where $I$ the identity
mapping on $\mathbb{R}^{N}$ and $\mathcal{P}$ the orthogonal
projection onto $\mathcal{M}$.

In order to obtain the effective decay estimates for orthogonal part $(I-\mathcal{P})W$, it is convenient to
transform (\ref{R-E4}) into a symmetric dissipative system of \textit{normal form} in the sense
defined below.

\begin{defn}\label{defn2.3}
The symmetric dissipative system (\ref{R-E4}) is said to be of the
normal form if $A^{0}(W)$ is block-diagonal associated with
orthogonal decomposition
$\mathbb{R}^{N}=\mathcal{M}\oplus\mathcal{M}^{\bot}$.
\end{defn}

Use the partition as $$U=\left(
                           \begin{array}{c}
                             U_{1} \\
                             U_{2} \\
                           \end{array}
                         \right),\ \ \ W=\left(
                           \begin{array}{c}
                             W_{1} \\
                             W_{2} \\
                           \end{array}
                         \right)
$$
associated with the orthogonal decomposition
$\mathbb{R}^{N}=\mathcal{M}\oplus\mathcal{M}^{\bot}$. Consider
the mapping $U\rightarrow V$ defined by
$$V=\left(
                           \begin{array}{c}
                             V_{1} \\
                             V_{2} \\
                           \end{array}
                         \right)=\left(
                           \begin{array}{c}
                             U_{1} \\
                             W_{2} \\
                           \end{array}
                         \right)$$
with $W_{2}=(D_{U_{2}}\eta(U))^{\top}$.  This is a diffeomorphism
from $\mathcal{O}_{U}$ onto its range $\mathcal{O}_{V}$. Denote by
$U=U(V)$ the inverse mapping which is a diffeomorphism from
$\mathcal{O}_{V}$ onto its range $\mathcal{O}_{U}$. Hence, $W=W(V)$
is the diffeomorphism composed by $W=W(U)$ and $U=U(V)$. The
straightforward calculation gives
\begin{eqnarray}
\tilde{A}^{0}(V)V_{t}+\sum^{n}_{j=1}\tilde{A}^{j}(V)V_{x_{j}}=\tilde{H}(V)\label{R-E7}
\end{eqnarray}
with $$\tilde{A}^{0}(V)=(D_{V}W)^{\top}A^{0}(W)D_{V}W,$$
$$\tilde{A}^{j}(V)=(D_{V}W)^{\top}A^{j}(W)D_{V}W,$$
$$\tilde{H}(V)=(D_{V}W)^{\top}H(W),$$
where $W$ is evaluated at $W(V)$. It was shown by \cite{KY2} that (\ref{R-E7}) is a symmetric dissipative system of the
normal form. Precisely,
\begin{thm} \label{thm2.1}
The system (\ref{R-E7}) is a symmetric dissipative system of the
normal form and $\tilde{H}(V)=H(W)$. It holds $W\in\mathcal{M}$ if
and only if $V\in\mathcal{M}$ between the variables $W$ with $V$.
Furthermore, the matrix $\tilde{L}(V):=-D_{V}\tilde{H}(V)$ can be
expressed as
$$\tilde{L}(V)=(D_{V}W)^{\top}L(W)D_{V}W$$ and satisfies
$\tilde{L}(V)=L(W)$ if $V\in \mathcal{M}$ (i.e., $W\in
\mathcal{M}$).
\end{thm}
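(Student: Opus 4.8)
The plan is to exploit the explicit block structure induced by the orthogonal decomposition $\mathbb{R}^{N}=\mathcal{M}\oplus\mathcal{M}^{\bot}$ together with the three defining relations $W=(D_{U}\eta)^{\top}$, $V_{1}=U_{1}$ and $V_{2}=W_{2}=(D_{U_{2}}\eta)^{\top}$. Write $\eta_{ab}=D^{2}_{U_{a}U_{b}}\eta$ for the blocks of the Hessian, so that $D^{2}_{U}\eta$ is symmetric with $\eta_{12}=\eta_{21}^{\top}$. First I would pin down the Jacobian $D_{V}W$. Passing through $U$, I get $D_{V}W=(D_{U}W)(D_{V}U)=(D^{2}_{U}\eta)(D_{U}V)^{-1}$, where $D_{U}V=\begin{pmatrix} I & 0 \\ \eta_{21} & \eta_{22} \end{pmatrix}$ is block lower-triangular with invertible diagonal blocks ($\eta_{22}$ is a principal submatrix of a positive-definite Hessian). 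Carrying out the block product shows that $D_{V}W$ is block upper-triangular with $(2,1)$-block $0$ and $(2,2)$-block $I$; in fact this last fact is immediate from $V_{2}=W_{2}$, which forces $\partial W_{2}/\partial V_{1}=0$ and $\partial W_{2}/\partial V_{2}=I$. Since $W=W(V)$ is a diffeomorphism, $D_{V}W$ is nonsingular on $\mathcal{O}_{V}$.

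With the Jacobian in hand, the symmetric-dissipative structure of (\ref{R-E7}) is cheap: $\tilde{A}^{0}$ and $\tilde{A}^{j}$ are congruence transforms $(D_{V}W)^{\top}(\cdot)(D_{V}W)$ of the symmetric $A^{0},A^{j}$, so they stay symmetric, and positive definiteness of $A^{0}$ is preserved because $D_{V}W$ is nonsingular. For the normal form I would insert $A^{0}=(D^{2}_{U}\eta)^{-1}$ to obtain $\tilde{A}^{0}=(D_{U}V)^{-\top}(D^{2}_{U}\eta)(D_{U}V)^{-1}$; performing the block product and using $\eta_{12}=\eta_{21}^{\top}$ makes the off-diagonal blocks cancel, leaving $\tilde{A}^{0}=\mathrm{diag}\big(\eta_{11}-\eta_{12}\eta_{22}^{-1}\eta_{21},\,\eta_{22}^{-1}\big)$, a Schur complement paired with $\eta_{22}^{-1}$, which is exactly block-diagonal relative to $\mathcal{M}\oplus\mathcal{M}^{\bot}$. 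The identity $\tilde{H}(V)=H(W)$ and the equivalence $W\in\mathcal{M}\Leftrightarrow V\in\mathcal{M}$ are then purely structural: $H(W)$ lies in $\mathcal{M}^{\bot}$ and so has vanishing first block, while $(D_{V}W)^{\top}$ is block lower-triangular with identity $(2,2)$-block, so left-multiplication fixes every vector of $\mathcal{M}^{\bot}$; and both membership conditions reduce to the vanishing of the second block, with $V_{2}=W_{2}$ settling the equivalence.

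The only genuinely delicate point, and the part I expect to be the main obstacle, is the formula for $\tilde{L}(V)=-D_{V}\tilde{H}(V)$. The hard part is that I would differentiate the \emph{product} $\tilde{H}(V)=(D_{V}W)^{\top}H(W(V))$ rather than the already-simplified $H(W(V))$, because only the former delivers the symmetric expression directly. The product rule produces two contributions: a quadratic term $-(D_{V}W)^{\top}\big(D_{W}H\big)(D_{V}W)=(D_{V}W)^{\top}L(W)(D_{V}W)$, and a second-order term whose $(i,j)$-entry is $\sum_{k}(\partial^{2}W_{k}/\partial V_{i}\partial V_{j})\,H_{k}(W)$. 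The claim is precisely that this second-order term vanishes, and it does for a structural reason: $H(W)\in\mathcal{M}^{\bot}$ kills all but its $\mathcal{M}^{\bot}$-components $H_{k}$, and for exactly those indices $W_{k}$ is a component of $W_{2}=V_{2}$, which is linear in $V$ and hence has zero second derivatives. What must be made transparent is the reconciliation with the naive chain rule: differentiating the simplified form would instead give $L(W)(D_{V}W)$, and the two answers coincide only because $L(W)=-D_{W}H(W)$ always has vanishing first block row (as $H$ has vanishing first block).

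Finally, to get $\tilde{L}(V)=L(W)$ on $\mathcal{M}$, I would use Definition \ref{defn2.2}: on $\mathcal{M}$ the matrix $L(W)$ is symmetric with null space exactly $\mathcal{M}$, hence block-diagonal of the form $\mathrm{diag}(0,L_{22})$. Feeding this into $(D_{V}W)^{\top}L(W)(D_{V}W)$ and invoking the $(2,1)$-block $=0$, $(2,2)$-block $=I$ structure of $D_{V}W$ collapses the congruence to $L(W)$ itself, which completes the statement.
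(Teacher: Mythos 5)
The paper offers no proof of Theorem \ref{thm2.1} to compare against: it is quoted as a known result from \cite{KY2}. Judged on its own merits, your argument is correct, and it reconstructs the standard argument. The Jacobian structure is right: $D_{U}V$ is block lower-triangular with diagonal blocks $I$ and $\eta_{22}$ (invertible as a principal block of the positive definite Hessian), so $D_{V}W=(D^{2}_{U}\eta)(D_{U}V)^{-1}$ has second block row $(0\;\;I)$; the congruence $(D_{U}V)^{-\top}(D^{2}_{U}\eta)(D_{U}V)^{-1}$ indeed reduces to $\mathrm{diag}\big(\eta_{11}-\eta_{12}\eta_{22}^{-1}\eta_{21},\,\eta_{22}^{-1}\big)$, which gives the normal form; and the block lower-triangularity of $(D_{V}W)^{\top}$ with identity $(2,2)$-block fixes every vector of $\mathcal{M}^{\bot}$, yielding $\tilde{H}(V)=H(W)$, while $V_{2}=W_{2}$ settles $W\in\mathcal{M}\Leftrightarrow V\in\mathcal{M}$. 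You also correctly isolate the one genuinely delicate step: differentiating the product $(D_{V}W)^{\top}H(W(V))$ produces, besides $(D_{V}W)^{\top}L(W)D_{V}W$, a term with entries $-\sum_{k}(\partial^{2}W_{k}/\partial V_{i}\partial V_{j})H_{k}(W)$, and this vanishes because $H_{k}\equiv0$ for $\mathcal{M}$-indices (hence contributes nothing) while $W_{k}=V_{k}$ is linear in $V$ for $\mathcal{M}^{\bot}$-indices (hence has zero second derivatives). Your cross-check against the naive chain rule is also sound: since $H$ has vanishing first block identically, $L(W)=-D_{W}H(W)$ has vanishing first block row, so $(D_{V}W)^{\top}L(W)=L(W)$ and the two computations of $\tilde{L}$ agree; the same block structure, combined with $L(W)=\mathrm{diag}(0,L_{22})$ on $\mathcal{M}$ (symmetry plus null space $\mathcal{M}$), collapses the congruence to $L(W)$ there.

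The only gap is one of assembly, not substance: to conclude that (\ref{R-E7}) is symmetric dissipative in the sense of Definition \ref{defn2.2} you should also record the last two bullets, namely that $\tilde{H}(V)=0\Leftrightarrow V\in\mathcal{M}$ (immediate by chaining $\tilde{H}=H$, the property $H(W)=0\Leftrightarrow W\in\mathcal{M}$ of the $W$-system, and your equivalence $W\in\mathcal{M}\Leftrightarrow V\in\mathcal{M}$), and that for $V\in\mathcal{M}$ the matrix $\tilde{L}(V)$ is symmetric, nonnegative definite, with null space exactly $\mathcal{M}$ (immediate from $\tilde{L}(V)=L(W)$ and the assumed properties of $L(W)$). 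You prove every ingredient needed for these two bullets but never state the conclusions explicitly.
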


As a simple consequence, we have an analogue of (\ref{R-E5})-(\ref{R-E6}) that $\tilde{H}(V)$ has the following expression for any fixed constant state $\bar{V}\in\mathcal{M}$ (i.e., $\bar{U}\in\mathcal{M}$):
\begin{eqnarray}
\tilde{H}(V)=-LV+\tilde{r}(V),\label{R-E8}
\end{eqnarray}
where $L=L(\bar{W}),\ \tilde{r}(V)\in \mathcal{M}^{\top}$ for all
$V\in\mathcal{O}_{V}$. Furthermore,
\begin{eqnarray}
|\tilde{r}(V)|\leq
C|V-\bar{V}||(I-\mathcal{P})V| \label{R-E9}
\end{eqnarray}
for $V\in\mathcal{O}_{V}$ close to $\bar{V}$.

\subsection{[SK] condition}
As far as we known, the [SK] stability condition play a key role in the study of generally dissipative systems,
which was first formulated by \textsc{Shizuta} and the second author \cite{SK} to deduce the decay estimates of solutions.
Here, we formulate the [SK] condition for (\ref{R-E7}),
since we shall deal with the symmetric dissipative system of normal form
in the subsequent analysis. Let $\bar{V}\in\mathcal{M}$ be a
constant state and consider the linearized form of (\ref{R-E7}) at
$V=\bar{V}$:

\begin{equation}
\tilde{A}^{0}V_{t}+\sum_{j=1}^{n}\tilde{A}^{j}V_{x_{j}}+LV=0,\label{R-E10}
\end{equation}
where $\tilde{A}^{0}=\tilde{A}^{0}(\bar{V}),\
\tilde{A}^{j}=\tilde{A}^{j}(\bar{V})$ and $L=L(\bar{V})$. Taking the
Fourier transform on (\ref{R-E10}) with respect to $x\in
\mathbb{R}^{n}$, we obtain
\begin{equation}
\tilde{A}^{0}\hat{V}_{t}+i|\xi|\tilde{A}(\omega)\hat{V}+L\hat{V}=0,
\label{R-E11}
\end{equation}
where $\tilde{A}(\omega):=\sum_{j=1}^{n}\tilde{A}^{j}\omega_{j}$
with $\omega=\xi/|\xi|\in \mathbb{S}^{n-1}$ (the unit sphere). Let $\lambda=\lambda(i\xi)$ be the eigenvalues of
(\ref{R-E11}), which solves the characteristic equation
$$\mathrm{det}(\lambda\tilde{A}^{0}+i|\xi|\tilde{A}(\omega)+L)=0.$$
Then the stability condition for (\ref{R-E10}) is formulated as follows.

\begin{defn}\label{defn2.4}
The symmetric form (\ref{R-E10}) satisfies the stability condition at
$\bar{V}\in\mathcal{M}$ if the following holds true: Let $\phi\in
\mathbb{R}^{N}$ satisfies $\phi\in\mathcal{M}$ (i.e., $L\phi=0$) and
$\lambda\tilde{A}^{0}+\tilde{A}(\omega)\phi=0$ for some
$(\lambda,\omega)\in \mathbb{R}\times\mathbb{S}^{n-1}$, then
$\phi=0$.
\end{defn}

Actually, the [SK] condition was first formulated in \cite{SK} for a general class of
symmetric hyperbolic-parabolic composite systems including the present
symmetric hyperbolic system (\ref{R-E10}). Moreover, there is an equivalent characterization for the [SK] condition.

\begin{thm}\label{thm2.2}(\cite{SK}) The following statements are
equivalent to each other.
\begin{itemize}
\item [$(\bullet)$] The system (\ref{R-E5}) satisfies the stability condition at $\bar{V}\in\mathcal{M}$;
\item [$(\bullet)$] $\mathrm{Re}\lambda(i\xi)<0$ for $\xi\neq0$;
\item [$(\bullet)$] There is a constant $c>0$ such that $\mathrm{Re}\lambda(i\xi)\leq-c|\xi|^2/(1+|\xi|^2)$ for $\xi\in\mathbb{R}^{n}$;
\item [$(\bullet)$] There is an $N\times N$ matrix $\tilde{K}(\omega)$ depending smooth on $\omega\in\mathbb{S}^{n-1}$ satisfying the
properties:
\begin{itemize}
\item [(i)] $\tilde{K}(-\omega)=-\tilde{K}(\omega)$ for
$\omega\in\mathbb{S}^{n-1}$; \item [(ii)] $\tilde{K}(\omega)\tilde{A}^{0}$
is
skew-symmetric for $\omega\in\mathbb{S}^{n-1}$;
\item [(iii)]$[\tilde{K}(\omega)\tilde{A}(\omega)]'+L$ is positive definite for
$\omega\in\mathbb{S}^{n-1}$, where $[X]'$ denotes the symmetric part of the matrix $X$.
\end{itemize}
\end{itemize}
\end{thm}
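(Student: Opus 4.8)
The plan is to establish the four statements equivalent through the cycle $(\mathrm{S}_3)\Rightarrow(\mathrm{S}_2)\Rightarrow(\mathrm{S}_1)\Rightarrow(\mathrm{S}_4)\Rightarrow(\mathrm{S}_3)$, where $(\mathrm{S}_1),\dots,(\mathrm{S}_4)$ denote the four bullets in the order listed (the algebraic stability condition, the spectral negativity, the uniform parabolic bound, and the existence of the compensating matrix). A convenient preliminary reduction is to symmetrize the symbol: since $\tilde{A}^{0}$ is symmetric positive definite, the substitution $\hat{V}=(\tilde{A}^{0})^{-1/2}\psi$ followed by left multiplication of (\ref{R-E11}) by $(\tilde{A}^{0})^{-1/2}$ lets me assume $\tilde{A}^{0}=I$, with $\tilde{A}(\omega)$ still symmetric and $L$ symmetric nonnegative definite with $\ker L=\mathcal{M}$. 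In these variables property (ii) of $(\mathrm{S}_4)$ says simply that $\tilde{K}(\omega)$ is skew-symmetric, and the eigenvalue relation reads $(\lambda I+i|\xi|\tilde{A}(\omega)+L)\phi=0$ for a unit vector $\phi$.

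The implication $(\mathrm{S}_3)\Rightarrow(\mathrm{S}_2)$ is immediate, since $-c|\xi|^{2}/(1+|\xi|^{2})<0$ for $\xi\neq0$. For $(\mathrm{S}_2)\Rightarrow(\mathrm{S}_1)$ I would argue by contraposition: if the stability condition fails there are $\phi\neq0$, $\omega_{0}\in\mathbb{S}^{n-1}$ and $\lambda_{0}\in\mathbb{R}$ with $L\phi=0$ and $(\lambda_{0}I+\tilde{A}(\omega_{0}))\phi=0$. Freezing $\xi=s\omega_{0}$ and substituting $\lambda=is\lambda_{0}$, one checks directly that $(\lambda I+is\tilde{A}(\omega_{0})+L)\phi=0$; thus $\lambda=is\lambda_{0}$ is a purely imaginary eigenvalue at the nonzero frequency $\xi=s\omega_{0}$, contradicting $\mathrm{Re}\,\lambda(i\xi)<0$.

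For $(\mathrm{S}_4)\Rightarrow(\mathrm{S}_3)$ I would run the Fourier-space Lyapunov argument of \cite{SK,UKS}. Reading (\ref{R-E11}) as an ODE in $t$ for each frozen $\xi$ and testing against $\psi$ gives the basic dissipation identity $\frac{d}{dt}\tfrac12|\psi|^{2}=-\langle L\psi,\psi\rangle$, which vanishes on $\ker L$ and so is insufficient by itself. To compensate I would introduce the modified energy
\[
E(\xi)=\tfrac12|\psi|^{2}+\kappa\,\frac{|\xi|}{1+|\xi|^{2}}\,\mathrm{Im}\,\langle\tilde{K}(\omega)\psi,\psi\rangle,
\]
which, $\tilde{K}(\omega)$ being real and skew, is real and, for $\kappa$ small, comparable to $|\psi|^{2}$. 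Differentiating and eliminating $i|\xi|\tilde{A}(\omega)\psi$ via the equation, the cross term reproduces $|\xi|^{2}/(1+|\xi|^{2})$ times the quadratic form of $[\tilde{K}(\omega)\tilde{A}(\omega)]'$ (using that the commutator $[\tilde{K},\tilde{A}]$ is symmetric); writing $[\tilde{K}\tilde{A}]'=([\tilde{K}\tilde{A}]'+L)-L$ and invoking the coercivity in property (iii), while absorbing the lower-order terms carried by $L\psi$, I would arrive at $\frac{d}{dt}E\le -c\,|\xi|^{2}/(1+|\xi|^{2})\,|\psi|^{2}$. Specializing to the eigenmode $\psi(t)=e^{\lambda t}\psi_{0}$, so that all quadratic quantities scale as $e^{2\mathrm{Re}\,\lambda\,t}$, converts this into the pointwise bound $\mathrm{Re}\,\lambda\le -c|\xi|^{2}/(1+|\xi|^{2})$.

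The main obstacle is $(\mathrm{S}_1)\Rightarrow(\mathrm{S}_4)$, the construction of $\tilde{K}(\omega)$. In the symmetrized variables the requirement $[\tilde{K}\tilde{A}]'+L>0$ becomes $\tfrac12\langle[\tilde{K}(\omega),\tilde{A}(\omega)]\phi,\phi\rangle+\langle L\phi,\phi\rangle>0$ for all unit $\phi$, because the symmetric part of $\tilde{K}\tilde{A}$ equals $\tfrac12$ of the commutator when $\tilde{A}$ is symmetric and $\tilde{K}$ is skew. On $\ker L$ this forces $\langle[\tilde{K}(\omega),\tilde{A}(\omega)]\phi,\phi\rangle>0$, and $(\mathrm{S}_1)$ — no eigenvector of the symmetric matrix $\tilde{A}(\omega)$ lies in $\ker L$ — is precisely what makes such a $\tilde{K}$ available. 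I would reformulate $(\mathrm{S}_1)$ as the observability-type rank condition $\bigcap_{k\ge0}\ker\big(L\,\tilde{A}(\omega)^{k}\big)=\{0\}$ (equivalent because $\tilde{A}(\omega)$ is symmetric, hence diagonalizable), build a skew-symmetric $\tilde{K}(\omega)$ out of polynomials in $\tilde{A}(\omega)$ and $L$ so that the commutator is positive on $\ker L$, and then use a compactness argument on the sphere, together with a sufficiently small multiple of this $\tilde{K}$, to upgrade positivity on $\ker L$ to global positive definiteness of $[\tilde{K}\tilde{A}]'+L$. The two remaining technical points are the smooth dependence on $\omega$, which I would secure by writing $\tilde{K}(\omega)$ through a manifestly smooth (polynomial or resolvent-based) formula, and the oddness (i), which follows from $\tilde{A}(-\omega)=-\tilde{A}(\omega)$ by taking $\tilde{K}(-\omega)=-\tilde{K}(\omega)$.
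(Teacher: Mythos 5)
First, a point of reference: the paper offers no proof of Theorem \ref{thm2.2} at all --- it is quoted from \cite{SK} --- so your proposal can only be compared with the cited source and with the computations the paper actually performs. Three of your four implications are correct. $(\mathrm{S}_3)\Rightarrow(\mathrm{S}_2)$ is trivial; your contrapositive argument for $(\mathrm{S}_2)\Rightarrow(\mathrm{S}_1)$ is exactly right (an eigenvector $\phi\in\ker L$ of $\tilde{A}(\omega_{0})$ yields the purely imaginary eigenvalue $\lambda=is\lambda_{0}$ at every frequency $\xi=s\omega_{0}$); and your $(\mathrm{S}_4)\Rightarrow(\mathrm{S}_3)$, via the modified energy $E=\frac12|\psi|^{2}+\kappa\frac{|\xi|}{1+|\xi|^{2}}\mathrm{Im}\langle\tilde{K}(\omega)\psi,\psi\rangle$ followed by specialization to eigenmodes, is sound --- indeed it is precisely the Lyapunov computation the paper does carry out, for the different purpose of decay estimates, in Proposition \ref{prop4.1} (see (\ref{R-E28})--(\ref{R-E34})) and again in Appendix A.

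The genuine gap is in $(\mathrm{S}_1)\Rightarrow(\mathrm{S}_4)$, the only hard implication. Your structural plan (positivity of the commutator on $\ker L$, then compactness plus a small multiple) is right, and for each \emph{fixed} $\omega$ a suitable skew matrix exists: with $P_{j}(\omega)$ the spectral projections and $\mu_{j}(\omega)$ the distinct eigenvalues of $\tilde{A}(\omega)$, the choice $\tilde{K}(\omega)=\sum_{j\neq k}\frac{P_{j}(\omega)LP_{k}(\omega)}{\mu_{j}(\omega)-\mu_{k}(\omega)}$ gives $\tfrac12[\tilde{K},\tilde{A}]+L=\tfrac12 L+\tfrac12\sum_{j}P_{j}LP_{j}$, which is positive definite exactly when no eigenvector of $\tilde{A}(\omega)$ lies in $\ker L$. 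What fails is your route to smooth dependence on $\omega$: there is no ``manifestly smooth polynomial or resolvent-based formula,'' because the coefficients $(\mu_{j}-\mu_{k})^{-1}$ and the projections $P_{j}(\omega)$ are discontinuous at points of $\mathbb{S}^{n-1}$ where eigenvalues cross or change multiplicity, and one cannot avoid the spectral structure: the blocks $P_{j}\tilde{K}P_{j}$ commute with $\tilde{A}(\omega)$ modulo each eigenspace, so only the components of $\tilde{K}$ coupling distinct eigenspaces contribute to $[\tilde{K},\tilde{A}]$. The standard repair is local, not algebraic: positive definiteness of $[\tilde{K}\tilde{A}(\omega)]'+L$ for a \emph{frozen} matrix $\tilde{K}$ is an open condition in $\omega$, so the compensator built at $\omega_{0}$ works on a whole neighborhood; covering the compact sphere by finitely many such neighborhoods and gluing with a smooth partition of unity $\{\chi_{i}\}$ produces $\tilde{K}(\omega)=\sum_{i}\chi_{i}(\omega)\tilde{K}(\omega_{i})$, which is smooth, skew, and satisfies (iii) (a sum of nonnegative multiples of positive definite matrices, at least one strictly positive, is positive definite); its odd part still satisfies (iii) because $\tilde{A}(-\omega)=-\tilde{A}(\omega)$, which settles (i). Without this gluing device (or an equivalent), your construction is incomplete at exactly the point where the theorem is nontrivial.
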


\subsection{Main results}
 In this paragraph, we state main results of this paper. First, we recall the global-in-time existence of classical solutions to the Cauchy problem (\ref{R-E1})-(\ref{R-E2}), the interesting reader is referred to \cite{XK} for the detailed proof. Here and below, denote by $\sigma_{c}$ the critical regularity $\sigma_{c}:=1+n/2$.

\begin{thm}(\cite{XK})\label{thm2.3}
Suppose the balance laws (\ref{R-E1}) admits an entropy defined as
Definition \ref{defn2.1} and the corresponding symmetric system
(\ref{R-E10}) satisfies the stability condition at $\bar{V}\in
\mathcal{M}$, where $\bar{V}$ is the constant state corresponding to
$\bar{U}$. There exists a positive constant $\delta_{0}$ such that
if
\begin{eqnarray*}
\|U_{0}-\bar{U}\|_{B^{\sigma_{c}}_{2,1}(\mathbb{R}^{n})}\leq \delta_{0},
\end{eqnarray*}
then the Cauchy problem (\ref{R-E1})-(\ref{R-E2}) has a unique
global solution $U\in \mathcal{C}^{1}(\mathbb{R}^{+}\times
\mathbb{R}^{n})$ satisfying
\begin{eqnarray*}
U-\bar{U}\in
\widetilde{\mathcal{C}}(B^{\sigma_{c}}_{2,1}(\mathbb{R}^{n}))\cap
\widetilde{\mathcal{C}}^1(B^{\sigma_{c}-1}_{2,1}(\mathbb{R}^{n})).
\end{eqnarray*}
Moreover, there exist two positive constants $C_{0},$ and $\mu_{0}$ such that the following energy inequality holds
\begin{eqnarray}
&&\|U-\bar{U}\|_{\widetilde{L}^\infty(B^{\sigma_{c}}_{2,1}(\mathbb{R}^{n}))}
+\mu_{0}\Big(\|(I-\mathcal{P})V\|_{\widetilde{L}^2(B^{\sigma_{c}}_{2,1}(\mathbb{R}^{n}))}
+\|\nabla
V\|_{\widetilde{L}^2(B^{\sigma_{c}-1}_{2,1}(\mathbb{R}^{n}))}\Big)
\nonumber\\&\leq&
C_{0}\|U_{0}-\bar{U}\|_{B^{\sigma_{c}}_{2,1}(\mathbb{R}^{n})},
\label{R-E12}
\end{eqnarray}
where $\mathcal{P}$ is the orthogonal projection onto $\mathcal{M}$.
\end{thm}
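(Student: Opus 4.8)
The plan is to combine a local-in-time existence result in the critical Chemin--Lerner framework with a uniform a priori energy estimate, and then to extend the solution globally by a continuation argument once the data are small. First I would pass from \eqref{R-E1} to the symmetric dissipative normal form \eqref{R-E7}, whose structure is the engine of the whole argument: $\tilde A^{0}(V),\tilde A^{j}(V)$ are symmetric, $\tilde A^{0}(V)$ is positive definite, and the source splits as $\tilde H(V)=-LV+\tilde r(V)$ with $L$ symmetric nonnegative, $\ker L=\mathcal{M}$, while $\tilde r(V)$ obeys the quadratic bound \eqref{R-E9}. Local existence for \eqref{R-E7} in $\widetilde{\mathcal C}_{T}(B^{\sigma_{c}}_{2,1})$ with $\sigma_{c}=1+n/2$ I would obtain by a Friedrichs-type regularization or a linearize-and-iterate scheme, the point being that $B^{n/2}_{2,1}\hookrightarrow L^{\infty}$ is an algebra so that the quasilinear coefficients and the source are tame at one derivative above $n/2$. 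The real content is the global uniform bound \eqref{R-E12}, so I focus the sketch there.

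For the a priori estimate I would work blockwise. Applying the dyadic projector $\Delta_{q}$ to \eqref{R-E7} and commuting it past the variable coefficients, so that the commutators $[\Delta_{q},\tilde A^{j}(V)]\partial_{x_{j}}V$ and an analogous $\tilde A^{0}$-commutator are thrown onto the right-hand side, and then taking the $L^{2}$ inner product with $\Delta_{q}V$, the symmetry of $\tilde A^{j}(V)$ lets me integrate the flux terms by parts so that only the lower-order $\mathrm{div}\,\tilde A^{j}(V)$ contributions survive, while the positive definiteness of $\tilde A^{0}(V)$ turns the time term into $\tfrac{1}{2}\tfrac{d}{dt}\langle \tilde A^{0}(V)\Delta_{q}V,\Delta_{q}V\rangle$ up to a harmless $\partial_{t}\tilde A^{0}(V)$ factor. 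The dissipative term contributes $\langle L\Delta_{q}V,\Delta_{q}V\rangle\gtrsim\|(I-\mathcal P)\Delta_{q}V\|_{L^{2}}^{2}$ by coercivity of $L$ on $\mathcal{M}^{\perp}$. This basic energy identity controls only the non-equilibrium component $(I-\mathcal P)V$ and leaves the equilibrium part undamped, which is exactly where the [SK] condition must enter.

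To recover dissipation on the full gradient I would invoke the compensating matrix $\tilde K(\omega)$ of Theorem~\ref{thm2.2}, realized as the pseudo-differential operator $\tilde K(D/|D|)$ built on the constant-coefficient part at $\bar V$ (the coefficient variations being absorbed as errors of commutator type), and split into its high- and low-frequency pieces. The mechanism is cleanest through a pointwise-in-$\xi$ energy à la \cite{UKS}: since $\tilde K(\omega)\tilde A^{0}$ is skew-symmetric, $\mathrm{Im}\langle\tilde K(\omega)\tilde A^{0}\hat V,\hat V\rangle$ is a genuine correction, and differentiating it in time and substituting \eqref{R-E11} produces, via property (iii), the lower bound $\langle([\tilde K\tilde A(\omega)]'+L)\hat V,\hat V\rangle\gtrsim|\hat V|^{2}$. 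Forming the modified blockwise density $\mathcal E_{q}(\xi)=\langle\tilde A^{0}\widehat{\Delta_{q}V},\widehat{\Delta_{q}V}\rangle+\kappa\,\frac{|\xi|}{1+|\xi|^{2}}\,\mathrm{Im}\langle\tilde K(\omega)\tilde A^{0}\widehat{\Delta_{q}V},\widehat{\Delta_{q}V}\rangle$ (pointwise in $\xi$, with $\kappa$ small) and combining the two identities, I would reach a differential inequality whose left side retains both the direct dissipation $\langle L\widehat{\Delta_{q}V},\widehat{\Delta_{q}V}\rangle$ and the compensated gradient dissipation $\frac{|\xi|^{2}}{1+|\xi|^{2}}|\widehat{\Delta_{q}V}|^{2}$. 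Integrating in $\xi$, multiplying by $2^{2q\sigma_{c}}$, taking square roots and summing in $q$ with the $\ell^{1}$ weight of $B^{\cdot}_{2,1}$ then furnishes precisely the two dissipation norms $\|(I-\mathcal P)V\|_{\widetilde L^{2}(B^{\sigma_{c}}_{2,1})}$ and $\|\nabla V\|_{\widetilde L^{2}(B^{\sigma_{c}-1}_{2,1})}$ appearing in \eqref{R-E12}.

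It remains to absorb the right-hand side, and this is where I expect the main difficulty to lie. The commutator and product terms must be estimated at the critical index without any loss of derivatives, using Bony's paraproduct decomposition together with the sharp commutator and Moser-type product estimates in $B^{\sigma_{c}}_{2,1}$; the $\ell^{1}$ summability of the Besov index makes these borderline and forces careful bookkeeping of the dyadic sums. The nonlinear source is controlled by feeding the structural bound \eqref{R-E9} into the product estimates, which yields contributions of the type $\|V-\bar V\|_{\widetilde{L}^\infty(B^{\sigma_{c}}_{2,1})}\,\|(I-\mathcal P)V\|_{\widetilde L^{2}(B^{\sigma_{c}}_{2,1})}$; crucially the extra factor $(I-\mathcal P)V$ matches the dissipation already produced, so under the smallness hypothesis these terms are absorbed into the left-hand side. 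Defining the energy functional $E(T)$ as the left side of \eqref{R-E12} and closing a quadratic inequality $E(T)\le C_{0}\|U_{0}-\bar U\|_{B^{\sigma_{c}}_{2,1}}+C\,E(T)^{2}$, a continuity argument then yields the uniform bound for small data, and combined with the local theory this extends the solution to $t=\infty$ and establishes \eqref{R-E12}.
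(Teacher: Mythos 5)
The paper itself does not prove Theorem~\ref{thm2.3}: it is quoted from \cite{XK}, so your proposal can only be measured against the strategy of that reference, which the introduction summarizes. Your skeleton --- normal form \eqref{R-E7}, local existence in the Chemin--Lerner space, blockwise energy estimates with the compensating matrix $\tilde K(\omega)$ of Theorem~\ref{thm2.2}, continuity argument --- is the right family of argument, and your treatment of the source is sound: since $\tilde r(V)\in\mathcal{M}^{\top}$, its frequency blocks pair with $(I-\mathcal{P})\Delta_q V$ and are absorbed by the dissipation, exactly as you say.

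The genuine gap is in the claimed closure $E(T)\le C_{0}\|U_{0}-\bar U\|_{B^{\sigma_c}_{2,1}}+C\,E(T)^{2}$, and it sits exactly where you did not look: the lowest inhomogeneous block $q=-1$ of the \emph{equilibrium} component. Besides the source, your blockwise estimate produces the quasilinear coefficient terms $\langle\partial_{t}\tilde A^{0}(V)\Delta_{-1}V,\Delta_{-1}V\rangle$, $\langle(\mathrm{div}\,\tilde A^{j}(V))\Delta_{-1}V,\Delta_{-1}V\rangle$ and $\langle[\Delta_{-1},\tilde A^{j}(V)]\partial_{x_{j}}V,\Delta_{-1}V\rangle$. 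Each of these carries exactly one dissipative factor ($\nabla V$ or $(I-\mathcal{P})V$, controlled in $L^{2}$ in time by the right-hand side of \eqref{R-E12}) against \emph{two} copies of $\Delta_{-1}(V-\bar V)$, whose equilibrium part $\mathcal{P}\Delta_{-1}(V-\bar V)$ is undamped: it is controlled only in $L^{\infty}_{t}(L^{2})$, because neither $\|(I-\mathcal{P})V\|_{\widetilde L^{2}(B^{\sigma_c}_{2,1})}$ nor $\|\nabla V\|_{\widetilde L^{2}(B^{\sigma_c-1}_{2,1})}$ bounds $\|\mathcal{P}\Delta_{-1}(V-\bar V)\|_{L^{2}_{t}(L^{2})}$ (Bernstein's inequality goes the wrong way on a low-frequency block). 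The time integral of such a term is therefore only $O(\sqrt{T}\,E(T)^{2}D(T))$, so the quadratic inequality does not close uniformly in $T$; no smallness assumption repairs this. This is precisely the point where the entropy of Definition~\ref{defn2.1} must be used beyond mere symmetrization (your sketch never invokes its dissipativity): as in \cite{HN,Y,KY2} and in \cite{XK}, the $L^{2}$-in-space estimate is obtained from the exact relative-entropy identity of \eqref{R-E1}, which produces \emph{no} such error terms, while the $\tilde K(\omega)$-compensated frequency-localized estimates are run in the \emph{homogeneous} space $\dot B^{\sigma_c}_{2,1}$, where $\|V-\bar V\|_{\dot B^{\sigma_c}_{2,1}}\approx\|\nabla V\|_{\dot B^{\sigma_c-1}_{2,1}}$ is itself part of the dissipation, so every cubic error there has two time-integrable factors. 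The two estimates are then glued by the relation between homogeneous and inhomogeneous Chemin--Lerner spaces --- exactly the ``elementary fact'' that the introduction of the present paper singles out as the key ingredient of \cite{XK}, and which is absent from your proposal.
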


Let us explain the notation of functional spaces appearing in Theorem \ref{thm2.3}.
Define
$$\widetilde{\mathcal{C}}_{T}(B^{\sigma_{c}}_{2,1}):=\widetilde{L}^{\infty}_{T}(B^{\sigma_{c}}_{2,1})\cap\mathcal{C}([0,T],B^{\sigma_{c}}_{2,1})$$
and
$$\widetilde{\mathcal{C}}^1_{T}(B^{\sigma_{c}-1}_{2,1}):=\{f\in\mathcal{C}^1([0,T],B^{\sigma_{c}-1}_{2,1})|\partial_{t}f\in\widetilde{L}^{\infty}_{T}(B^{\sigma_{c}-1}_{2,1})\},$$
where the index $T$ can be omitted when $T=+\infty$. For the definition of mixed space-time Besov spaces (Chemin-Lerner spaces), see, e.g., \cite{BCD,CL}.

A natural question follows. How does the large-time behavior of solutions exhibit on the framework of spatially critical Besov spaces?
Following from the Gagliardo-Nirenberg-Sobolev inequality and complex interpolation inequality related to Besov spaces, the asymptotics
of solutions $U$ and $(I-\mathcal{P})U$ have been shown (see \cite{XK}), however, the definite decay rates are not available, so we intend to further
answer the question.

Based on the decay framework $L^2(\mathbb{R}^{n})\cap L^{p}(\mathbb{R}^{n})(1\leq p<2)$ for linearized dissipative systems (see \cite{UKS}), the second author \cite{Ka} further considered the generally hyperbolic-parabolic composite systems and obtained the optimal decay estimates in $H^{l}(\mathbb{R}^{n})\cap L^{1}(\mathbb{R}^{n})(l>2+n/2)$. The main analysis ingredients involved the pointwise
energy estimates and low-frequency and high-frequency integrals in Fourier spaces. $L^q(q\geq2)$ estimates
were then obtained by the Gagliardo-Nirenberg interpolation inequality. This effort has been developed great, for instance, by Hoff and Zumbrun \cite{HZ} for compressible Navier-Stokes equations. They performed the more elaborate spectral analysis on the Green's matrix and a number of Fourier multiplier and Paley-Wiener
theory, which allow to produce the $L^q$ decay rates for any $1\leq q\leq\infty$ in $H^{l}(\mathbb{R}^{n})\cap L^{1}(\mathbb{R}^{n})(l>2+n/2)$.

In this paper, we give a new decay framework for linearized dissipative systems in $L^2(\mathbb{R}^{n})\cap\dot{B}^{-s}_{2,\infty}(\mathbb{R}^{n})(s>0)$, which
improves the classical framework in \cite{UKS} essentially, since $L^{1}(\mathbb{R}^{n})\hookrightarrow\dot{B}^{0}_{1,\infty}(\mathbb{R}^{n})\hookrightarrow\dot{B}^{-\frac{n}{2}}_{2,\infty}(\mathbb{R}^{n})$. Let us sketch the technical obstruction and the strategy to overcome it. Firstly, due to the loss of integrality of $\dot{B}^{-s}_{2,\infty}(\mathbb{R}^{n})$, we have to go considerably beyond the usual low-frequency and high-frequency integrals based on the Hausdorff-Young inequality in \cite{Ka,UKS}. A key observation is that the symbol of pseudo-differential operator $(1-\Delta)^{-\frac{1}{2}}\nabla$ behaves $1$ when $|\xi|$ goes to infinity
and $\xi$ when $|\xi|$ goes to zero, which motivates the different considerations on the high-frequency part and low-frequency part of solutions. By virtue of the unit decomposition, we can see the high-frequency part of solutions decays exponentially in time, however, the low-frequency part of solutions need to be dealt with more carefully. Fortunately, thanks to the interpolation inequality related to the Besov space $\dot{B}^{-s}_{2,\infty}(\mathbb{R}^{n})$ and the fact that the $\dot{B}^{-s}_{2,\infty}$-norm of solutions is preserved along time evolution for the linearized dissipative equations, the desired differential inequality on the low-frequency of solutions is achieved, which leads to the desired decay estimates. We would like to point out such decay framework for generally dissipative systems is new, which allows to pay less attention on the traditional spectral analysis, e.g., as in \cite{BHN,HZ,Ka,UKS}, see Appendix A.

Secondly, the low-frequency and high-frequency ideas further inspire us to use the Littlewood-Paley decomposition and do more elaborate analysis, that is, to
develop the L-P pointwise energy estimates and obtain the optimal decay estimates on the framework of Besov spaces, either the inhomogeneous case or the homogeneous case. As shown by Sect.~\ref{sec:4}, we see that the proof ideas on the low-frequency part are quite different due to the restriction use of the interpolation inequality related to $\dot{B}^{-s}_{2,\infty}(\mathbb{R}^{n})$.
To treat the difficulty due to the strong quasilinearity of (\ref{R-E1}), as in \cite{KY2}, the time-weighted energy approach is mainly used. However, in order to take care of the topological relation between inhomegeous Besov spaces and homogeneous Besov spaces, the new energy functional contains different time-weighted norms according to the derivative index, which is used to overcome the technical difficulty in subsequent nonlinear analysis. Here, there is a little surprising that the energy functional can be regarded as the improvement of that in \cite{KY2}, where the derivative index of solutions can take all values in the total interval $[0,\sigma_{c}-1]$ rather than nonnegative integers only. To get round the obstruction originated from partially dissipative structures, the time-weighted energy functional for $(I-\mathcal{P})U$
is also introduced.  Furthermore, the Gagliardo-Nirenberg-Sobolev inequality as in \cite{N} is well improved, which allows to the case of fractional derivatives, see Lemma \ref{lem8.4}. In terms of the frequency-localization Duhamel principle, the nonlinear decay estimates are ultimately constructed by using the low-frequencey and high-frequency decomposition methods and the improved Gagliardo-Nirenberg-Sobolev inequality, see Sect.~\ref{sec:5} for details.

Denote $\Lambda^{\ell}f:=\mathcal{F}^{-1}|\xi|^{\ell}\mathcal{F}f$. Our main results are stated as follows.

\begin{thm}\label{thm2.4}
Let $U(t,x)$ be the global classical solution of Theorem \ref{thm2.3}. If further the initial data $U_{0}\in \dot{B}^{-s}_{2,\infty}(\mathbb{R}^{n})(0<s\leq n/2)$ and
$$E_{0}:=\|U_{0}-\bar{U}\|_{B^{\sigma_{c}}_{2,1}(\mathbb{R}^{n})\cap\dot{B}^{-s}_{2,\infty}(\mathbb{R}^{n})}$$
is sufficiently small. Then the classical solution $U(t,x)$  satisfies the following decay estimates
\begin{eqnarray}
\|\Lambda^{\ell}[U(t,\cdot)-\bar{U}]\|_{X_{1}(\mathbb{R}^{n})}\lesssim E_{0}(1+t)^{-\frac{s+\ell}{2}} \label{R-E13}
\end{eqnarray}
for $0\leq\ell\leq \sigma_{c}-1$, where $X_{1}:=B^{\sigma_{c}-1-\ell}_{2,1}$ if $0\leq\ell<\sigma_{c}-1$ and $X_{1}:=\dot{B}^{0}_{2,1}$ if $\ell=\sigma_{c}-1$;
\begin{eqnarray}
\|\Lambda^{\ell}(I-\mathcal{P})U(t,\cdot)\|_{X_{2}(\mathbb{R}^{n})}\lesssim E_{0}(1+t)^{-\frac{s+\ell+1}{2}} \label{R-E14}
\end{eqnarray}
for $0\leq\ell\leq \sigma_{c}-2$, where $X_{2}:=B^{\sigma_{c}-2-\ell}_{2,1}$ if $0\leq\ell<\sigma_{c}-2$ and $X_{2}:=\dot{B}^{0}_{2,1}$ if $\ell=\sigma_{c}-2$.
\end{thm}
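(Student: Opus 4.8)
The plan is to combine a Littlewood--Paley pointwise energy analysis of the linearized flow with a frequency-localized Duhamel principle and a bootstrap argument driven by time-weighted energy functionals. First I would pass to the symmetric dissipative normal form (\ref{R-E7}) and, using $\tilde H(V)=-LV+\tilde r(V)$ from (\ref{R-E8}) together with the quadratic structure (\ref{R-E9}) --- in which at least one factor is the dissipative component $(I-\mathcal P)V$ --- record the perturbation equation for $V-\bar V$. For the linear evolution $e^{t\mathcal B}$ generated by (\ref{R-E10}) I would establish frequency-localized pointwise estimates: applying $\dot\Delta_q$ and exploiting the compensating matrix $\tilde K(\omega)$ furnished by Theorem \ref{thm2.2} gives a block-wise Lyapunov inequality, hence $\|\dot\Delta_q e^{t\mathcal B}(V_0-\bar V)\|_{L^2}\lesssim e^{-c\frac{2^{2q}}{1+2^{2q}}t}\|\dot\Delta_q(V_0-\bar V)\|_{L^2}$. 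Thus high-frequency blocks decay exponentially, while at low frequency $\|\Lambda^\ell\dot\Delta_q e^{t\mathcal B}(V_0-\bar V)\|_{L^2}\lesssim 2^{q\ell}e^{-c2^{2q}t}2^{qs}\|V_0-\bar V\|_{\dot B^{-s}_{2,\infty}}$. Since the inhomogeneous space $X_1$ measures frequencies below one merely in $L^2$, summing $2^{q(\ell+s)}e^{-c2^{2q}t}$ over the low blocks produces the rate $(1+t)^{-\frac{s+\ell}{2}}$; the dissipative component gains an extra factor $2^q$ at low frequency, sharpening the rate to $(1+t)^{-\frac{s+\ell+1}{2}}$. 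This yields the linear versions of (\ref{R-E13})--(\ref{R-E14}).

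Next I would write the Duhamel representation
\begin{equation}
V(t)-\bar V=e^{t\mathcal B}(V_0-\bar V)+\int_0^t e^{(t-\tau)\mathcal B}\,\tilde r(V(\tau))\,d\tau,
\end{equation}
and introduce a functional tracking both target rates over the whole range of fractional indices, schematically
\begin{equation}
\mathcal E(t):=\sup_{0\le\tau\le t}\Big[\sup_{0\le\ell\le\sigma_c-1}(1+\tau)^{\frac{s+\ell}{2}}\|\Lambda^\ell(U-\bar U)\|_{X_1}+\sup_{0\le\ell\le\sigma_c-2}(1+\tau)^{\frac{s+\ell+1}{2}}\|\Lambda^\ell(I-\mathcal P)U\|_{X_2}\Big].
\end{equation}
The goal is to prove $\mathcal E(t)\lesssim E_0$, from which (\ref{R-E13})--(\ref{R-E14}) follow at once. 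Taking the supremum over the continuous interval $[0,\sigma_c-1]$ rather than over integer indices is what lets the statement cover all fractional $\ell$, and is legitimate because $\Lambda^\ell$ commutes with the dyadic projectors so the weights can be tuned block by block.

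The core is the nonlinear estimate of the Duhamel integral. Using (\ref{R-E9}), product and composition rules in Besov spaces, and the improved Gagliardo--Nirenberg--Sobolev inequality (Lemma \ref{lem8.4}) to accommodate fractional derivatives, I would bound $\tilde r(V(\tau))$ in the relevant norms by $\mathcal E(\tau)^2(1+\tau)^{-\beta}$, where the exponent $\beta$ is dictated by the two fast-decaying factors $(U-\bar U)$ and $(I-\mathcal P)U$. Splitting $\int_0^t=\int_0^{t/2}+\int_{t/2}^t$ and applying the exponential high-frequency decay and the heat-type low-frequency decay of $e^{(t-\tau)\mathcal B}$ on each piece, a direct time-integration check would show the integral reproduces $(1+t)^{-\frac{s+\ell}{2}}$ (respectively $(1+t)^{-\frac{s+\ell+1}{2}}$ for the dissipative part). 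Combined with the linear bound this yields the closed inequality $\mathcal E(t)\lesssim E_0+\mathcal E(t)^2$; smallness of $E_0$ together with a standard continuity argument then gives $\mathcal E(t)\lesssim E_0$.

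The hard part will be the low-frequency analysis in the weak topology of $\dot B^{-s}_{2,\infty}$. Because this is a negative-order, $\ell^\infty$-type norm, the classical low-/high-frequency integrals of \cite{Ka,UKS} and the usual product estimates are unavailable; one must instead use that the $\dot B^{-s}_{2,\infty}$ bound is essentially propagated along the flow and couple it with the interpolation inequality relating $\dot B^{-s}_{2,\infty}$ to the homogeneous norms, so that the heat-type decay is extracted block by block without losing the sharp exponent. This is compounded by the \emph{degenerate} dissipation: since $L$ damps only $(I-\mathcal P)V$, the decay of the full solution $U-\bar U$ cannot be read off directly but must be recovered through the compensating structure of Theorem \ref{thm2.2}, and one has to verify that the quadratic source $\tilde r(V)$, whose favorable factor is precisely $(I-\mathcal P)V$, feeds correctly into this mechanism so that (\ref{R-E13}) and the faster rate (\ref{R-E14}) close simultaneously.
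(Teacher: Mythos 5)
Your overall architecture (Littlewood--Paley Lyapunov estimates for the linearized system, a frequency-localized Duhamel formula, and a time-weighted bootstrap over the continuous range of derivative indices) is the same as the paper's, and your low-frequency mechanism --- summing $2^{q(\ell+s)}e^{-c2^{2q}t}$ over $q<0$ to extract $(1+t)^{-\frac{s+\ell}{2}}$ --- is exactly the argument of Proposition \ref{prop4.2}. However, there are two genuine gaps. First, your Duhamel representation carries only $\tilde r(V)$ as a source. After freezing the coefficients at $\bar V$, the correct source is $\mathcal{R}=\mathcal{R}_{1}+\mathcal{R}_{2}$ with $\mathcal{R}_{1}=-\sum_{j}\tilde A^{0}\bigl(\tilde A^{0}(V)^{-1}\tilde A^{j}(V)-(\tilde A^{0})^{-1}\tilde A^{j}\bigr)V_{x_{j}}$ and $\mathcal{R}_{2}$ containing in addition $\bigl(\tilde A^{0}(V)^{-1}-(\tilde A^{0})^{-1}\bigr)LV$. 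The quasilinear term $\mathcal{R}_{1}$ is the ``strong quasilinearity'' of the problem and carries \emph{no} factor of $(I-\mathcal{P})V$; your claim that the exponent $\beta$ is dictated by ``the two fast-decaying factors $(U-\bar U)$ and $(I-\mathcal{P})U$'' fails for it. Its decay must be recovered from the extra derivative on $V_{x_{j}}$, and estimating it in $L^{p}$ with $1/p=s/n+1/2$ (so that $L^{p}\hookrightarrow\dot B^{-s}_{2,\infty}$ by Lemma \ref{lem8.5}) requires H\"older plus the improved Gagliardo--Nirenberg inequality with the case distinction $0<s\le n/2-1$ versus $n/2-1<s\le n/2$; this is precisely what guarantees the resulting exponent ($s+1$, resp. $s+\frac12$) is integrable in time, and it is absent from your outline.

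Second, and more structurally, you derive the faster rate (\ref{R-E14}) from the assertion that ``the dissipative component gains an extra factor $2^{q}$ at low frequency'' in the linear semigroup. The Lyapunov inequality your method produces, namely $\frac{d}{dt}E[\widehat{\dot\Delta_{q}w}]+c\,|(I-\mathcal{P})\widehat{\dot\Delta_{q}w}|^{2}+c\,\frac{|\xi|^{2}}{1+|\xi|^{2}}|\widehat{\dot\Delta_{q}w}|^{2}\le 0$, yields pointwise-in-time decay of the \emph{full} block and only time-integrated control of $(I-\mathcal{P})\widehat{\dot\Delta_{q}w}$; it does not give a pointwise bound on the dissipative component with an extra $|\xi|$. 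Substantiating that factor would require the eigenprojection expansion of the symbol, i.e.\ exactly the spectral analysis this framework is designed to avoid. The paper instead obtains (\ref{R-E14}) from the normal form (Lemma \ref{lem5.4}): since $\tilde A^{0}(V)$ is block diagonal, the $\mathcal{M}^{\bot}$-component $V_{2}$ satisfies the damped equation $\tilde A^{0}_{22}V_{2t}+L_{22}V_{2}=\tilde{\mathcal{R}}$ with $(\tilde A^{0}_{22})^{-1}L_{22}$ positive definite, where $\tilde{\mathcal{R}}$ collects the derivative terms $\tilde A^{j}_{21}(V)V_{1x_{j}}$, $\tilde A^{j}_{22}(V)V_{2x_{j}}$, the coefficient differences and $\tilde r_{12}(V)$; solving this ODE and inserting the already-established decay of $\Lambda^{\ell+1}\tilde w$ (one extra derivative gives the extra half power) closes the estimate. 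Without this step, or a genuine spectral argument in its place, your bootstrap cannot close, because the functional $\mathcal{E}$ couples the rate of $(I-\mathcal{P})U$ back into the estimate of $U-\bar U$.
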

Note that the $L^p(\mathbb{R}^{n})$ embedding in Lemma \ref{lem8.5}, we can obtain the optimal decay rates.

\begin{thm}\label{thm2.5}
Let $U(t,x)$ be the global classical solution of Theorem \ref{thm2.3}. If further the initial data $U_{0}\in L^p(\mathbb{R}^{n})(1\leq p<2)$ and
$$\widetilde{E}_{0}:=\|U_{0}-\bar{U}\|_{B^{\sigma_{c}}_{2,1}(\mathbb{R}^{n})\cap L^p(\mathbb{R}^{n})}$$
is sufficiently small. Then the classical solutions $U(t,x)$  satisfies the following optimal decay estimates
\begin{eqnarray}
\|\Lambda^{\ell}[U(t,\cdot)-\bar{U}]\|_{X_{1}(\mathbb{R}^{n})}\lesssim E_{0}(1+t)^{-\gamma_{p,2}-\frac{\ell}{2}} \label{R-E15}
\end{eqnarray}
for $0\leq\ell\leq \sigma_{c}-1$,
and
\begin{eqnarray}
\|\Lambda^{\ell}(I-\mathcal{P})U(t,\cdot)\|_{X_{2}(\mathbb{R}^{n})}\lesssim E_{0}(1+t)^{-\gamma_{p,2}-\frac{\ell+1}{2}} \label{R-E16}
\end{eqnarray}
for $0\leq\ell\leq \sigma_{c}-2$, where $X_{1}$ and $X_{2}$ are the same space notations as in Theorem \ref{thm2.4}. We denote by $\gamma_{p,2}:=\frac{n}{2}(\frac{1}{p}-\frac{1}{2})$ the $L^p(\mathbb{R}^{n})$-$L^2(\mathbb{R}^{n})$ decay rates for the heat kernerl.
\end{thm}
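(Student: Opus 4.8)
The plan is to derive Theorem \ref{thm2.5} as a direct consequence of Theorem \ref{thm2.4}, the only genuinely new ingredient being the conversion of the $L^{p}$-hypothesis into membership in a homogeneous Besov space of negative order. The decisive tool is the embedding recorded in Lemma \ref{lem8.5},
\[
L^{p}(\mathbb{R}^{n})\hookrightarrow \dot{B}^{-s}_{2,\infty}(\mathbb{R}^{n}),\qquad s=n\Big(\tfrac{1}{p}-\tfrac{1}{2}\Big),
\]
valid for $1\leq p<2$ (and itself obtained by composing $L^{p}\hookrightarrow\dot{B}^{0}_{p,\infty}$ with the scaling embedding $\dot{B}^{0}_{p,\infty}\hookrightarrow\dot{B}^{-s}_{2,\infty}$, the latter because $0-n/p=-s-n/2$). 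First I would record the elementary identity that, with this choice, $s=2\gamma_{p,2}$, since by definition $\gamma_{p,2}=\frac{n}{2}(\frac{1}{p}-\frac{1}{2})$; hence $\frac{s}{2}=\gamma_{p,2}$.

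Next I would verify that this $s$ lies in the admissible window $(0,n/2]$ required by Theorem \ref{thm2.4}. As $p\mapsto n(\frac1p-\frac12)$ is strictly decreasing on $[1,2)$, the exponent attains its maximal value $n/2$ at $p=1$ and decreases to $0$ as $p\uparrow 2$; thus $0<s\leq n/2$ throughout $1\leq p<2$, and Theorem \ref{thm2.4} applies with precisely this $s$. The embedding then transfers the smallness hypothesis: since the high-regularity parts coincide,
\[
E_{0}=\|U_{0}-\bar{U}\|_{B^{\sigma_{c}}_{2,1}\cap\dot{B}^{-s}_{2,\infty}}\lesssim \|U_{0}-\bar{U}\|_{B^{\sigma_{c}}_{2,1}\cap L^{p}}=\widetilde{E}_{0},
\]
so that $\widetilde{E}_{0}\ll1$ forces $E_{0}\ll1$ and the conclusions \eqref{R-E13}--\eqref{R-E14} of Theorem \ref{thm2.4} become available.

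It then remains only to substitute. Putting $s=2\gamma_{p,2}$ turns the factor $(1+t)^{-(s+\ell)/2}$ into $(1+t)^{-\gamma_{p,2}-\ell/2}$ and the factor $(1+t)^{-(s+\ell+1)/2}$ into $(1+t)^{-\gamma_{p,2}-(\ell+1)/2}$, which are exactly \eqref{R-E15}--\eqref{R-E16} on the respective ranges $0\leq\ell\leq\sigma_{c}-1$ and $0\leq\ell\leq\sigma_{c}-2$, with the same space notations $X_{1},X_{2}$. The constant $E_{0}$ appearing on the right-hand side is the one furnished by Theorem \ref{thm2.4} and is itself controlled by $\widetilde{E}_{0}$ through the displayed embedding, so the estimates are consistent with the stated hypothesis.

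Since all the analytic labour has already been expended in proving Theorem \ref{thm2.4}, no real obstacle remains: the argument is purely a matter of matching parameters. The one point that demands care is the correct identification of the embedding exponent $s=n(\frac1p-\frac12)$ in Lemma \ref{lem8.5}, together with the checks that it equals $2\gamma_{p,2}$ and stays within $(0,n/2]$ for every $1\leq p<2$. The optimality asserted in the statement is then inherited verbatim from the heat-kernel rate $\gamma_{p,2}$, which the embedding $L^{p}\hookrightarrow\dot{B}^{-2\gamma_{p,2}}_{2,\infty}$ renders transparent.
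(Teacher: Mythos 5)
Your proposal is correct and is essentially the paper's own argument: the paper likewise deduces the $L^{p}$ result from the $\dot{B}^{-s}_{2,\infty}$ result (Theorem \ref{thm2.4}, via Proposition \ref{prop5.3}) by invoking the embedding $L^{p}(\mathbb{R}^{n})\hookrightarrow\dot{B}^{-s}_{2,\infty}(\mathbb{R}^{n})$ of Lemma \ref{lem8.5} with $s=n(\tfrac{1}{p}-\tfrac{1}{2})=2\gamma_{p,2}$. Your additional checks (that $s\in(0,n/2]$ for all $1\leq p<2$, and that $E_{0}\lesssim\widetilde{E}_{0}$ so smallness transfers) are exactly the parameter matching the paper leaves implicit.
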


\begin{rem}\label{rem2.1}
Let us mention that Theorems \ref{thm2.4}-\ref{thm2.5} exhibit the various decay rates of solution and its higher order derivatives. The harmonic analysis allow to reduce significantly the regularity requirements on the initial data in comparison with the previous works \cite{BHN,HZ,Ka,UKS}.
It is worth noting that the derivative index $\ell$ can take values in the interval, for example, $[0,\sigma_{c}-1]$ rather than nonnegative integers only. Additionally, the decay of the orthogonal part $(I-\mathcal{P})U$ of solution is faster at half rate among all the components of solution.
\end{rem}

\begin{rem}\label{rem2.2}
The proofs of Theorems \ref{thm2.4}-\ref{thm2.5} consists several steps. The first step is to develop L-P pointwise energy estimates, which lead to the decay estimates for the linearized dissipative hyperbolic system, see Sect.~\ref{sec:4}. The next step is to develop the frequency-localization Duhamel principle
and time-weighted energy functionals to construct the decay estimates for nonlinear dissipative system. A new point is that the low-frequency and high-frequency decomposition method and improved Gagliardo-Nirenberg-Sobolev inequality will be firstly used. To close the Lyapunov inequality related to time-weighted energy functionals, the last step is to construct the decay estimates for the orthogonal part $(I-\mathcal{P})U$ of solution based on the symmetric hyperbolic system of norm form.
\end{rem}

As the immediate consequence of Theorems \ref{thm2.4}-\ref{thm2.5}, the optimal decay rates in the usual $L^2(\mathbb{R}^{n})$ space are available.
\begin{cor}\label{cor2.1}
Let $U(t,x)$ be the global classical solutions of Theorem \ref{thm2.3}.
\begin{itemize}
\item [(i)]  If $E_{0}$ is sufficiently small, then
\begin{eqnarray}
\|\Lambda^{\ell}[U(t,\cdot)-\bar{U}]\|_{L^2(\mathbb{R}^{n})}\lesssim E_{0}(1+t)^{-\frac{\ell+s}{2}},  \  0\leq\ell\leq\sigma_{c}-1; \label{R-E17}
\end{eqnarray}
\begin{eqnarray}
\|\Lambda^{\ell}(I-\mathcal{P})U(t,\cdot)\|_{L^2(\mathbb{R}^{n})}\lesssim  E_{0}(1+t)^{-\frac{s+\ell+1}{2}}, \  0\leq\ell\leq \sigma_{c}-2. \label{R-E18}
\end{eqnarray}

\item [(ii)] If $\widetilde{E}_{0}$ is sufficiently small, then
\begin{eqnarray}
\|\Lambda^{\ell}[U(t,\cdot)-\bar{U}]\|_{L^2(\mathbb{R}^{n})}\lesssim \widetilde{E}_{0}(1+t)^{-\gamma_{p,2}-\frac{\ell}{2}}, \ 0\leq\ell\leq\sigma_{c}-1; \label{R-E19}
\end{eqnarray}
\begin{eqnarray}
\|\Lambda^{\ell}(I-\mathcal{P})U(t,\cdot)\|_{L^2(\mathbb{R}^{n})}\lesssim \widetilde{E}_{0}(1+t)^{-\gamma_{p,2}-\frac{\ell+1}{2}},\  0\leq\ell\leq \sigma_{c}-2. \label{R-E20}
\end{eqnarray}
\end{itemize}
\end{cor}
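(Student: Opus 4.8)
The plan is to read Corollary~\ref{cor2.1} as nothing more than a functional-space downgrade of Theorems~\ref{thm2.4} and~\ref{thm2.5}: in every estimate the target space $X_{1}$ or $X_{2}$ is continuously embedded in $L^{2}(\mathbb{R}^{n})$, so the decay rates already established there pass verbatim to the weaker $L^{2}$ norm. Thus the whole argument reduces to checking a single embedding statement and then quoting the two theorems; no new smallness hypothesis or nonlinear estimate enters.

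First I would record the embedding. At the homogeneous endpoint we use $\dot{B}^{0}_{2,1}(\mathbb{R}^{n})\hookrightarrow L^{2}(\mathbb{R}^{n})$, which is immediate from Plancherel's theorem together with the inclusion $\ell^{1}\hookrightarrow\ell^{2}$ applied to the dyadic Littlewood--Paley energies (equivalently $\dot{B}^{0}_{2,1}\hookrightarrow\dot{B}^{0}_{2,2}=L^{2}$). For the inhomogeneous targets, whenever the regularity index is nonnegative, that is $\sigma_{c}-1-\ell\geq0$ for $X_{1}$ and $\sigma_{c}-2-\ell\geq0$ for $X_{2}$ (exactly guaranteed by the ranges $0\leq\ell\leq\sigma_{c}-1$ and $0\leq\ell\leq\sigma_{c}-2$), one has the chain $B^{\sigma}_{2,1}(\mathbb{R}^{n})\hookrightarrow B^{0}_{2,1}(\mathbb{R}^{n})\hookrightarrow L^{2}(\mathbb{R}^{n})$. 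Consequently $\|\cdot\|_{L^{2}}\lesssim\|\cdot\|_{X_{1}}$ and $\|\cdot\|_{L^{2}}\lesssim\|\cdot\|_{X_{2}}$ hold uniformly across both the generic case and the endpoint case $\ell=\sigma_{c}-1$ (resp. $\ell=\sigma_{c}-2$).

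With this in hand, part~(i) follows by applying $\|\cdot\|_{L^{2}}\lesssim\|\cdot\|_{X_{1}}$ to the left-hand side of~(\ref{R-E13}) and $\|\cdot\|_{L^{2}}\lesssim\|\cdot\|_{X_{2}}$ to that of~(\ref{R-E14}), which produces~(\ref{R-E17}) and~(\ref{R-E18}) with the very same time weights $(1+t)^{-\frac{s+\ell}{2}}$ and $(1+t)^{-\frac{s+\ell+1}{2}}$ under the smallness of $E_{0}$. Identically, part~(ii) is obtained by feeding~(\ref{R-E15}) and~(\ref{R-E16}) of Theorem~\ref{thm2.5} into the same two embeddings, yielding~(\ref{R-E19})--(\ref{R-E20}) under the smallness of $\widetilde{E}_{0}$.

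Since the mechanism is a one-line embedding, there is essentially no analytical obstacle; the only point requiring attention, and the single place the hypotheses are genuinely used, is verifying that the regularity indices of the inhomogeneous targets remain nonnegative, so that the embedding into $L^{2}$ is legitimate rather than merely into a space of negative order. This is precisely why the ranges of $\ell$ in~(\ref{R-E17})--(\ref{R-E20}) coincide with those in Theorems~\ref{thm2.4}--\ref{thm2.5}, and why the endpoint values $\ell=\sigma_{c}-1$ and $\ell=\sigma_{c}-2$ must be routed through the homogeneous space $\dot{B}^{0}_{2,1}$ rather than an inhomogeneous one.
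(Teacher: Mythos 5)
Your proposal is correct and matches the paper's intended argument: the paper states Corollary \ref{cor2.1} as an ``immediate consequence'' of Theorems \ref{thm2.4}--\ref{thm2.5}, precisely because $X_{1},X_{2}\hookrightarrow L^{2}$ (via $\dot{B}^{0}_{2,1}\hookrightarrow L^{2}$ at the endpoints and $B^{\sigma}_{2,1}\hookrightarrow L^{2}$ for $\sigma>0$, cf.\ Lemma \ref{lem3.2}), which is exactly the embedding chain you verify. Your write-up simply makes explicit what the paper leaves implicit, with the same ranges of $\ell$ and the same time weights.
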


In terms of improved Gagliardo-Nirenberg-Sobolev inequality (Lemma \ref{lem8.4}), furthermore, we can obtain the optimal $L^{p}(\mathbb{R}^{n})$-$L^{q}(\mathbb{R}^{n})$ decay estimates for (\ref{R-E1}).
\begin{cor}\label{cor2.2} Let $U(t,x)$ be the global classical solutions of Theorem \ref{thm2.3}. Additionally, if $\widetilde{E}_{0}$ is sufficiently small,
then
\begin{eqnarray}
\|\Lambda^{k}[U(t,\cdot)-\bar{U}]\|_{L^{q}(\mathbb{R}^{n})}\lesssim \widetilde{E}_{0} (1+t)^{-\gamma_{p,q}-\frac{k}{2}}, \label{R-E21}
\end{eqnarray}
for $2\leq q\leq\infty$ and $0\leq k\leq\sigma_{c}-1-2\gamma_{2,q}$, and
\begin{eqnarray}
\|\Lambda^{k}(I-\mathcal{P})U(t,\cdot)\|_{L^{q}(\mathbb{R}^{n})}\lesssim \widetilde{E}_{0} (1+t)^{-\gamma_{p,q}-\frac{k+1}{2}}, \label{R-E22}
\end{eqnarray}
for $2\leq q\leq n$ and $0\leq k\leq\sigma_{c}-2-2\gamma_{2,q}$, where $\gamma_{p,q}=\frac{n}{2}(\frac{1}{p}-\frac{1}{q})$ is the $L^p(\mathbb{R}^{n})$-$L^q(\mathbb{R}^{n})$ decay rates for the heat kernerl.
\end{cor}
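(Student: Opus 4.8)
The plan is to obtain the $L^q$ rates directly from the $L^2$ rates already available in Corollary~\ref{cor2.1}(ii), using the improved Gagliardo--Nirenberg--Sobolev inequality of Lemma~\ref{lem8.4} to trade homogeneous derivative levels in $L^2$ for integrability in $L^q$. The algebraic engine is the additivity of the heat-kernel exponents: writing $2\gamma_{2,q}=\tfrac n2-\tfrac nq$, I first record
\begin{equation*}
\gamma_{p,2}+\gamma_{2,q}=\tfrac n2\bigl(\tfrac1p-\tfrac12\bigr)+\tfrac n2\bigl(\tfrac12-\tfrac1q\bigr)=\tfrac n2\bigl(\tfrac1p-\tfrac1q\bigr)=\gamma_{p,q},
\end{equation*}
which is what collapses a product of two decay factors into the single target rate.

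For (\ref{R-E21}), fix $2\le q\le\infty$ and $0\le k\le\sigma_{c}-1-2\gamma_{2,q}$ and apply Lemma~\ref{lem8.4} to $f=U-\bar U$: there are $k_{1},k_{2}$ and $\theta\in[0,1]$ with $(1-\theta)k_{1}+\theta k_{2}=k+2\gamma_{2,q}$ and
\begin{equation*}
\|\Lambda^{k}[U-\bar U]\|_{L^{q}}\lesssim \|\Lambda^{k_{1}}[U-\bar U]\|_{L^{2}}^{1-\theta}\,\|\Lambda^{k_{2}}[U-\bar U]\|_{L^{2}}^{\theta}.
\end{equation*}
The admissible range of $k$ is exactly the condition $0\le k+2\gamma_{2,q}\le\sigma_{c}-1$, which permits a choice of $k_{1},k_{2}\in[0,\sigma_{c}-1]$, so both $L^2$ factors are governed by Corollary~\ref{cor2.1}(ii). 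Inserting $\|\Lambda^{k_{i}}[U-\bar U]\|_{L^{2}}\lesssim\widetilde E_{0}(1+t)^{-\gamma_{p,2}-k_{i}/2}$ and summing exponents yields
\begin{equation*}
-(1-\theta)\bigl(\gamma_{p,2}+\tfrac{k_{1}}2\bigr)-\theta\bigl(\gamma_{p,2}+\tfrac{k_{2}}2\bigr)=-\gamma_{p,2}-\tfrac12\bigl(k+2\gamma_{2,q}\bigr)=-\gamma_{p,q}-\tfrac k2,
\end{equation*}
which is precisely (\ref{R-E21}). The orthogonal estimate (\ref{R-E22}) is structurally identical, now applied to $f=(I-\mathcal P)U$ and fed by the faster rate $\|\Lambda^{\ell}(I-\mathcal P)U\|_{L^{2}}\lesssim\widetilde E_{0}(1+t)^{-\gamma_{p,2}-(\ell+1)/2}$ of Corollary~\ref{cor2.1}(ii); the extra $\tfrac12$ in each factor survives the convex combination and produces $-\gamma_{p,q}-\tfrac{k+1}2$. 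The distinct $q$-ranges are forced solely by the derivative budget: $U-\bar U$ is controlled in $L^2$ up to level $\sigma_{c}-1$, so $q$ may run to $\infty$, whereas $(I-\mathcal P)U$ is controlled only to level $\sigma_{c}-2=\tfrac n2-1$, forcing $2\gamma_{2,q}\le\tfrac n2-1$, i.e.\ $\tfrac nq\ge1$ and hence $q\le n$.

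The main obstacle is the endpoint behaviour and the verification that Lemma~\ref{lem8.4} really reaches $q=\infty$. For interior values $q<\infty$ with $k+2\gamma_{2,q}<\sigma_{c}-1$ a genuine interpolation with $k_{1}<k_{2}$ inside $[0,\sigma_{c}-1]$ is available, and the $L^{2}$ input from Corollary~\ref{cor2.1} suffices. At the boundary $k+2\gamma_{2,q}=\sigma_{c}-1$, in particular for $q=\infty$ where $2\gamma_{2,\infty}=\tfrac n2$ forces $k=0$, the interpolation degenerates and a plain $L^{2}$ bound on $\Lambda^{\sigma_{c}-1}f$ is no longer enough: one must invoke the critical Besov embedding $\dot B^{\,n/2}_{2,1}\hookrightarrow L^{\infty}$ and therefore the sharper $\dot B^{0}_{2,1}$ endpoint control supplied at $\ell=\sigma_{c}-1$ in Theorem~\ref{thm2.5}. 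This is exactly why Lemma~\ref{lem8.4} is formulated in the Besov rather than the Sobolev framework; once the endpoint is secured through this embedding, the interior cases follow from the scale-invariant interpolation above, completing the proof.
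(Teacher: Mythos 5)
Your proposal follows essentially the same route as the paper's own proof (given for Corollary \ref{cor5.2}): there, too, Lemma \ref{lem8.4} is applied with $r=2$ to interpolate $\|\Lambda^{k}\tilde w\|_{L^{q}}$ between $\|\Lambda^{m}\tilde w\|_{L^{2}}$ and $\|\Lambda^{\varrho}\tilde w\|_{L^{2}}$ under the relation $k+n(\tfrac12-\tfrac1q)=m(1-\theta)+\varrho\theta$, the $L^{2}$ rates of Corollary \ref{cor5.1} (equivalently Corollary \ref{cor2.1}(ii)) are inserted, and the identity $\gamma_{p,2}+\gamma_{2,q}=\gamma_{p,q}$ collapses the exponents; the admissible ranges of $k$ and $q$ come from $0\le m<\varrho\le\sigma_{c}-1$ (resp.\ $\sigma_{c}-2$ for the orthogonal part), exactly as in your derivation. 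The one point where you go beyond the paper is the endpoint $q=\infty$ (hence $k=0$) in (\ref{R-E21}): there the constraint forces $\theta=1$ and $\varrho=\sigma_{c}-1=n/2$, so the interpolation degenerates to $\|f\|_{L^{\infty}}\lesssim\|\Lambda^{n/2}f\|_{L^{2}}$, i.e.\ the critical Sobolev embedding, which is false; the paper's proof silently invokes Lemma \ref{lem8.4} at this degenerate parameter, although its own justification (via Lemma \ref{lem8.3}, which requires $0<\theta<1$) does not cover it. Your patch --- the embedding $\dot B^{n/2}_{2,1}\hookrightarrow L^{\infty}$ combined with the $\dot B^{0}_{2,1}$ endpoint estimate at $\ell=\sigma_{c}-1$ in Theorem \ref{thm2.5}, whose rate $(1+t)^{-\gamma_{p,2}-(\sigma_{c}-1)/2}=(1+t)^{-\gamma_{p,\infty}}$ is exactly the target --- closes this gap correctly, so your argument is, if anything, more complete than the paper's. (For $q<\infty$ the boundary case $\theta=1$ is harmless, being the valid embedding $\dot H^{\varrho-k}\hookrightarrow L^{q}$, which is why no such patch is needed for (\ref{R-E22}), where $q\le n<\infty$.)
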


Let us point out again that our results for general dissipative systems were obtained by assuming the standard dissipative entropy in \cite{KY} and
all the time [SK] condition, where the degenerate matrix $L$ is symmetric in Definition \ref{defn2.2}. These decay results can be also applied to the 3D damped compressible Euler equations for perfect gas flow and to obtain the optimal decay rates on the framework of spatially Besov spaces, see Sect. \ref{sec:6}. Unfortunately, if the degenerate matrix $L$ is not symmetric, for example, for some concrete models, like the Timoshenko system and Euler-Maxwell system, etc., then
these decay results including Theorem \ref{thm2.3} (global-in-time existence) can no longer hold. Recently, \textsc{Ueda}, \textsc{Duan} and
the second author \cite{UDK} have formulated a new structural condition and analyze the weaker dissipative structure for general systems.  We expect that the decay framework in the present paper can be further generalized to the nonstandard but interesting case, which is the next consideration.

Finally, for readers we would like to mention the recent work \cite{SS} for the Boltzmann equation. Based on the global existence of solutions in
weighted Sobolev spaces of geometric fractional order (see \cite{GS}), \textsc{Sohinger} and \textsc{Strain} first introduced the mixed Besov space $\dot{B}^{-s}_{2,\infty}L^2_{v} (s>0)$ to investigate the optimal time decay rates. In constrast, their arguments need more regularity and can not work in spatially critical Besov spaces effectively, however, their efforts provided us the inspiration, when we deduced the decay estimates in the homogeneous Besov spaces, see Proposition \ref{prop4.2}. \\

\textbf{Notations}. Throughout the paper, we use
$(\cdot,\cdot)$ to denote the standard inner product in
the real $\mathbb{R}^{N}$ or complex $\mathbb{C}^{N}$. $f\lesssim g$ denotes $f\leq Cg$, where $C>0$
is a generic constant. $f\thickapprox g$ means $f\lesssim g$ and
$g\lesssim f$.  Denote by $\mathcal{C}([0,T],X)$ (resp., $\mathcal{C}^{1}([0,T],X)$)
the space of continuous (resp., continuously differentiable)
functions on $[0,T]$ with values in a Banach space $X$. For
simplicity, the notation $\|(f,g)\|_{X}$ means $
\|f\|_{X}+\|g\|_{X}$ with $f,g\in X$.

\section{Littlewood-Paley theory and Besov spaces}\setcounter{equation}{0}\label{sec:3}
The proofs of most of the results presented  require a
dyadic decomposition of Fourier variables, so we recall briefly the
Littlewood-Paley decomposition and related Besov spaces, see the recent book
\cite{BCD} for details.

Let us start with the Fourier transform. The Fourier transform $\hat{f}$ (or $\mathcal{F}f$)
of a $L^1$-function $f$ is given by
$$\mathcal{F}f=\int_{\mathbb{R}^{n}}f(x)e^{-2\pi x\cdot\xi}dx.$$ More
generally, the Fourier transform of a tempered distribution $f\in\mathcal{S}'$ is defined by
the dual argument in the standard way.

We now introduce a dyadic partition of $\mathbb{R}^{n}$. Choose
$\phi_{0}\in \mathcal{S}$ such that $\phi_{0}$ is even,
$$\mathrm{supp}\phi_{0}:=A_{0}=\Big\{\xi\in\mathbb{R}^{n}:\frac{3}{4}\leq|\xi|\leq\frac{8}{3}\Big\},\  \mbox{and}\ \ \phi_{0}>0\ \ \mbox{on}\ \ A_{0}.$$
Set $A_{q}=2^{q}A_{0}$ for $q\in\mathbb{Z}$. Furthermore, we define
$$\phi_{q}(\xi)=\phi_{0}(2^{-q}\xi)$$ and define $\Phi_{q}\in
\mathcal{S}$ by
$$\mathcal{F}\Phi_{q}(\xi)=\frac{\phi_{q}(\xi)}{\sum_{q\in \mathbb{Z}}\phi_{q}(\xi)}.$$
It follows that both $\mathcal{F}\Phi_{q}(\xi)$ and $\Phi_{q}$ are
even and satisfy the following properties:
$$\mathcal{F}\Phi_{q}(\xi)=\mathcal{F}\Phi_{0}(2^{-q}\xi),\ \ \ \mathrm{supp}\ \mathcal{F}\Phi_{q}(\xi)\subset A_{q},\ \ \ \Phi_{q}(x)=2^{qn}\Phi_{0}(2^{q}x)$$
and
$$\sum_{q=-\infty}^{\infty}\mathcal{F}\Phi_{q}(\xi)=\begin{cases}1,\ \ \ \mbox{if}\ \ \xi\in\mathbb{R}^{n}\setminus \{0\},
\\ 0, \ \ \ \mbox{if}\ \ \xi=0.\end{cases}
$$

Let $P$ be the class of all polynomials of $\mathbb{R}^{n}$ and denote by $S'_{0}:=S/P$ the tempered
distributions modulo polynomials. As a consequence, for any $f\in S'_{0},$ we have
$$\sum_{q=-\infty}^{\infty}\Phi_{q}\ast f=f.$$

Next, we give the definition of homogeneous Besov spaces. To do this,
 we set
$$\dot{\Delta}_{q}f=\Phi_{q}\ast f,\ \ \ \ q=0,\pm1,\pm2,...$$

\begin{defn}\label{defn3.1}
For $s\in \mathbb{R}$ and $1\leq p,r\leq\infty,$ the homogeneous
Besov spaces $\dot{B}^{s}_{p,r}$ is defined by
$$\dot{B}^{s}_{p,r}=\{f\in S'_{0}:\|f\|_{\dot{B}^{s}_{p,r}}<\infty\},$$
where
$$\|f\|_{\dot{B}^{s}_{p,r}}
=\begin{cases}\Big(\sum_{q\in\mathbb{Z}}(2^{qs}\|\dot{\Delta}_{q}f\|_{L^p})^{r}\Big)^{1/r},\
\ r<\infty, \\ \sup_{q\in\mathbb{Z}}
2^{qs}\|\dot{\Delta}_{q}f\|_{L^p},\ \ r=\infty.\end{cases}
$$\end{defn}

To define the inhomogeneous Besov spaces, we set $\Psi\in
\mathcal{C}_{0}^{\infty}(\mathbb{R}^{n})$ be even and satisfy
$$\mathcal{F}\Psi(\xi)=1-\sum_{q=0}^{\infty}\mathcal{F}\Phi_{q}(\xi).$$
It is clear that for any $f\in S'_{0}$, yields
$$\Psi*f+\sum_{q=0}^{\infty}\Phi_{q}\ast f=f.$$
We further set
$$\Delta_{q}f=\begin{cases}0,\ \ \ \ \ \ \ \, \ j\leq-2,\\
\Psi*f,\ \ \ j=-1,\cr \Phi_{q}\ast f, \ \ j=0,1,2,...,\end{cases}$$
which leads to the definition of inhomogeneous Besov spaces.

\begin{defn}\label{defn3.2}
For $s\in \mathbb{R}$ and $1\leq p,r\leq\infty,$ the inhomogeneous
Besov spaces $B^{s}_{p,r}$ is defined by
$$B^{s}_{p,r}=\{f\in S':\|f\|_{B^{s}_{p,r}}<\infty\},$$
where
$$\|f\|_{B^{s}_{p,r}}
=\begin{cases}\Big(\sum_{q=-1}^{\infty}(2^{qs}\|\Delta_{q}f\|_{L^p})^{r}\Big)^{1/r},\
\ r<\infty, \\ \sup_{q\geq-1} 2^{qs}\|\Delta_{q}f\|_{L^p},\ \
r=\infty.\end{cases}$$
\end{defn}

For convenience of reader, we present some useful facts as follows. The first one is the improved
Bernstein inequality, see, e.g., \cite{W}.

\begin{lem}\label{lem3.1}
Let $0<R_{1}<R_{2}$ and $1\leq a\leq b\leq\infty$.
\begin{itemize}
\item [(i)] If $\mathrm{Supp}\mathcal{F}f\subset \{\xi\in \mathbb{R}^{n}: |\xi|\leq
R_{1}\lambda\}$, then
\begin{eqnarray*}
\|\Lambda^{\alpha}f\|_{L^{b}}
\lesssim \lambda^{\alpha+n(\frac{1}{a}-\frac{1}{b})}\|f\|_{L^{a}}, \ \  \mbox{for any}\ \  \alpha\geq0;
\end{eqnarray*}

\item [(ii)]If $\mathrm{Supp}\mathcal{F}f\subset \{\xi\in \mathbb{R}^{n}:
R_{1}\lambda\leq|\xi|\leq R_{2}\lambda\}$, then
\begin{eqnarray*}
\|\Lambda^{\alpha}f\|_{L^{a}}\approx\lambda^{\alpha}\|f\|_{L^{a}}, \ \  \mbox{for any}\ \ \alpha\in\mathbb{R}.
\end{eqnarray*}
\end{itemize}
\end{lem}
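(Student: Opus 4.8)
The plan is to reduce both parts to a single kernel estimate obtained via scaling and Young's convolution inequality, treating the fractional operator $\Lambda^{\alpha}$ as a Fourier multiplier that may be freely truncated to the spectral region where $f$ lives.

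First I would set up part (i). Choose a fixed $\phi\in\mathcal{C}_{0}^{\infty}(\mathbb{R}^{n})$ with $\phi\equiv1$ on $\{|\xi|\le R_{1}\}$ and $\mathrm{Supp}\,\phi\subset\{|\xi|\le 2R_{1}\}$. Since $\mathcal{F}f$ is supported in $\{|\xi|\le R_{1}\lambda\}$, the multiplier $|\xi|^{\alpha}$ may be truncated there without changing the result, so that
$$\Lambda^{\alpha}f=\mathcal{F}^{-1}\big[|\xi|^{\alpha}\phi(\xi/\lambda)\mathcal{F}f\big]=\big(\mathcal{F}^{-1}h_{\lambda}\big)\ast f,\qquad h_{\lambda}(\xi):=|\xi|^{\alpha}\phi(\xi/\lambda).$$
Writing $g(\xi):=|\xi|^{\alpha}\phi(\xi)$ and $K:=\mathcal{F}^{-1}g$, the scaling relation $h_{\lambda}(\xi)=\lambda^{\alpha}g(\xi/\lambda)$ gives $\mathcal{F}^{-1}h_{\lambda}(x)=\lambda^{\alpha+n}K(\lambda x)$. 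Young's inequality with $1+\tfrac{1}{b}=\tfrac{1}{a}+\tfrac{1}{c}$, where $\tfrac{1}{c}=1-\tfrac{1}{a}+\tfrac{1}{b}\in[0,1]$ precisely because $a\le b$, then yields
$$\|\Lambda^{\alpha}f\|_{L^{b}}\le\big\|\lambda^{\alpha+n}K(\lambda\cdot)\big\|_{L^{c}}\|f\|_{L^{a}}=\lambda^{\alpha+n(\frac{1}{a}-\frac{1}{b})}\|K\|_{L^{c}}\|f\|_{L^{a}},$$
which is the claimed bound once we know $K\in L^{c}$ for every $c\in[1,\infty]$.

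The main obstacle is exactly this integrability of $K$, and it is where the hypothesis $\alpha\ge0$ enters. Away from the origin $g$ is smooth and compactly supported; the only difficulty is the point $\xi=0$, where $g$ coincides with $|\xi|^{\alpha}$. Since $\alpha\ge0$ the function $g$ is bounded with compact support, so $K$ is continuous and bounded, i.e. $K\in L^{\infty}$. For the large-$x$ behavior I would use that the Fourier transform of the homogeneous symbol $|\xi|^{\alpha}$ is a constant multiple of $|x|^{-n-\alpha}$, so the singular part of $g$ near the origin produces a tail $|K(x)|\lesssim|x|^{-n-\alpha}$ for $|x|\ge1$, while the smooth compactly supported remainder contributes Schwartz decay. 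Because $\alpha\ge0$ this tail is integrable at infinity (for $\alpha=0$, $g$ is itself smooth and $K$ is Schwartz), whence $K\in L^{1}$; interpolating with $K\in L^{\infty}$ gives $K\in L^{c}$ for all $c\in[1,\infty]$, completing (i).

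For part (ii) I would run the same argument but place the cutoff away from the origin: pick $\tilde{\phi}\in\mathcal{C}_{0}^{\infty}(\mathbb{R}^{n})$ with $\tilde{\phi}\equiv1$ on the annulus $\{R_{1}\le|\xi|\le R_{2}\}$ and $\mathrm{Supp}\,\tilde{\phi}\subset\{R_{1}/2\le|\xi|\le 2R_{2}\}$. Now $g(\xi)=|\xi|^{\alpha}\tilde{\phi}(\xi)$ belongs to $\mathcal{C}_{0}^{\infty}$ for \emph{every} real $\alpha$, since $|\xi|^{\alpha}$ is smooth on the support of $\tilde{\phi}$; hence $K=\mathcal{F}^{-1}g$ is Schwartz and lies in every $L^{c}$. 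The same scaling and Young estimate, here with $a=b$ and $c=1$, give the upper bound $\|\Lambda^{\alpha}f\|_{L^{a}}\lesssim\lambda^{\alpha}\|f\|_{L^{a}}$ for all $\alpha\in\mathbb{R}$. The reverse inequality follows by applying this upper bound to $\Lambda^{-\alpha}$: since $\Lambda^{\alpha}f$ still has spectrum in the annulus and $f=\Lambda^{-\alpha}(\Lambda^{\alpha}f)$, we obtain $\|f\|_{L^{a}}\lesssim\lambda^{-\alpha}\|\Lambda^{\alpha}f\|_{L^{a}}$, i.e. $\lambda^{\alpha}\|f\|_{L^{a}}\lesssim\|\Lambda^{\alpha}f\|_{L^{a}}$. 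Combining the two bounds yields the equivalence $\|\Lambda^{\alpha}f\|_{L^{a}}\approx\lambda^{\alpha}\|f\|_{L^{a}}$, and explains why (ii) holds for all real $\alpha$ whereas (i) requires $\alpha\ge0$.
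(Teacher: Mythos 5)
Your proof is correct. Note first that the paper does not actually prove Lemma \ref{lem3.1}: it is quoted from the literature (``see, e.g., \cite{W}''), so there is no internal proof to compare against. Your argument is the standard one behind that citation: truncate the multiplier $|\xi|^{\alpha}$ by a cutoff adapted to the spectral support, rescale to unit scale, and apply Young's inequality, so that the whole lemma rests on the kernel $K=\mathcal{F}^{-1}\big[|\xi|^{\alpha}\phi\big]$ belonging to every $L^{c}$, $1\leq c\leq\infty$. The exponent bookkeeping ($1/c=1-1/a+1/b\in[0,1]$ because $a\leq b$, and the scaling factor $\lambda^{\alpha+n(1/a-1/b)}$) is right, and in part (ii) the lower bound via $f=\Lambda^{-\alpha}(\Lambda^{\alpha}f)$, using that $\Lambda^{\alpha}f$ keeps its spectrum in the annulus, is exactly how the equivalence is obtained.

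The one place where your write-up is lighter than a complete proof is the claim $K\in L^{1}$ in part (i). The decomposition you invoke (``singular part near the origin gives a $|x|^{-n-\alpha}$ tail, smooth remainder is Schwartz'') cannot be taken quite literally: after subtracting a compactly supported smooth piece you are left with $|\xi|^{\alpha}\psi(\xi)$, not the homogeneous symbol $|\xi|^{\alpha}$, and the correction $|\xi|^{\alpha}(1-\psi)$ is neither compactly supported nor integrable, while the Fourier transform of $|\xi|^{\alpha}$ itself exists only as a tempered distribution. The standard airtight version is dyadic: split the ball into annuli $|\xi|\sim 2^{j}$, $j\leq 0$, note that the piece $g_{j}=|\xi|^{\alpha}\chi_{j}\phi$ satisfies $|\nabla^{N}g_{j}|\lesssim 2^{j(\alpha-N)}$ on a set of measure $\sim 2^{jn}$, hence $\|\mathcal{F}^{-1}g_{j}\|_{L^{1}}\lesssim 2^{j\alpha}$, and sum the geometric series $\sum_{j\leq 0}2^{j\alpha}<\infty$. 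This convergence is precisely where $\alpha>0$ is spent (the case $\alpha=0$ being trivial), which matches your observation about where the hypothesis $\alpha\geq 0$ enters, and it also yields the pointwise tail bound you asserted. With that substitution your proof is complete as a self-contained replacement for the citation.
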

As a consequence of the above inequality, we have
$$\|\Lambda^{\alpha} f\|_{B^s_{p, r}}\lesssim\|f\|_{B^{s +\alpha}_{p,
r}} \ (\alpha\geq0); \ \ \ \|\Lambda^{\alpha} f\|_{\dot{B}^s_{p, r}}\approx \|f\|_{\dot{B}^{s+\alpha}_{p, r}} \ (\alpha\in\mathbb{R}).$$
The following lemma gives basic embedding properties in Besov spaces.
\begin{lem}\label{lem3.2} Let $s\in \mathbb{R}$ and $1\leq
p,r\leq\infty$. Then
\begin{itemize}
\item[(1)] $\dot{B}^{0}_{p,1}\hookrightarrow L^p\hookrightarrow\dot{B}^{0}_{p,\infty},\ \  \dot{B}^{0}_{p,1}\hookrightarrow B^{0}_{p,1}$;
\item[(2)]$B^{s}_{p,r}=L^{p}\cap \dot{B}^{s}_{p,r} (s>0);$
\item[(3)]$B^{s}_{p,r}\hookrightarrow
B^{\tilde{s}}_{p,\tilde{r}}$ whenever $\tilde{s}< s$ or  $\tilde{s}=s$ and $r\leq\tilde{r}$;
\item[(4)]$\dot{B}^{s}_{p,r}\hookrightarrow \dot{B}^{s-n(\frac{1}{p}-\frac{1}{\tilde{p}})}_{\tilde{p},r}
$ and $B^{s}_{p,r}\hookrightarrow
B^{s-n(\frac{1}{p}-\frac{1}{\tilde{p}})}_{\tilde{p},r}$ whenever $p\leq\tilde{p}$;
\item[(5)]$\dot{B}^{n/p}_{p,1}\hookrightarrow\mathcal{C}_{0},\ \ B^{n/p}_{p,1}\hookrightarrow\mathcal{C}_{0}(p<\infty),$
where $\mathcal{C}_{0}$ is the space of continuous bounded functions
which decay at infinity.
\end{itemize}
\end{lem}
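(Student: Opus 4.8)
The plan is to treat each of the five items as an essentially self-contained statement about the defining Littlewood--Paley sequence norms of Definitions \ref{defn3.1} and \ref{defn3.2}, feeding in only two analytic inputs: the Bernstein inequality (Lemma \ref{lem3.1}) and Young's convolution inequality. The one recurring estimate I will establish at the outset is the uniform bound $\|\dot{\Delta}_{q}f\|_{L^{p}}\lesssim\|f\|_{L^{p}}$, valid for every $q\in\mathbb{Z}$; this follows from $\dot{\Delta}_{q}f=\Phi_{q}\ast f$, Young's inequality, and the scaling identity $\Phi_{q}(x)=2^{qn}\Phi_{0}(2^{q}x)$, which forces $\|\Phi_{q}\|_{L^{1}}=\|\Phi_{0}\|_{L^{1}}$ independently of $q$. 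With this in hand, part (1) is quick: the triangle inequality applied to $f=\sum_{q}\dot{\Delta}_{q}f$ gives $\dot{B}^{0}_{p,1}\hookrightarrow L^{p}$; the uniform bound gives $L^{p}\hookrightarrow\dot{B}^{0}_{p,\infty}$; and for $\dot{B}^{0}_{p,1}\hookrightarrow B^{0}_{p,1}$ I will use that, by the spectral definitions, $\mathcal{F}\Psi=\sum_{q\le-1}\mathcal{F}\Phi_{q}$ on $\mathbb{R}^{n}\setminus\{0\}$, so the inhomogeneous low block satisfies $\|\Delta_{-1}f\|_{L^{p}}\le\sum_{q\le-1}\|\dot{\Delta}_{q}f\|_{L^{p}}$, while the blocks $\Delta_{q}=\dot{\Delta}_{q}$ coincide for $q\ge0$; summing yields the claim.

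Parts (3) and (4) are pure bookkeeping. For part (3), the case $\tilde{s}=s$, $r\le\tilde{r}$ is just the sequence embedding $\ell^{r}\hookrightarrow\ell^{\tilde{r}}$; the case $\tilde{s}<s$ follows by writing $2^{q\tilde{s}}\|\Delta_{q}f\|_{L^{p}}=2^{q(\tilde{s}-s)}\bigl(2^{qs}\|\Delta_{q}f\|_{L^{p}}\bigr)$ and summing the geometric factor $2^{q(\tilde{s}-s)}$ over $q\ge-1$, which converges precisely because $\tilde{s}-s<0$ and the index $q$ is bounded below. For part (4), I will apply Bernstein (i) block by block with $a=p$, $b=\tilde{p}$, $\alpha=0$ and $\lambda=2^{q}$ (each annular support $A_{q}$ lies in a ball of radius $\sim2^{q}$), obtaining $\|\dot{\Delta}_{q}f\|_{L^{\tilde{p}}}\lesssim2^{qn(1/p-1/\tilde{p})}\|\dot{\Delta}_{q}f\|_{L^{p}}$; this factor exactly cancels the shift in the regularity index, so taking $\ell^{r}$ norms closes the estimate. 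The inhomogeneous version is identical, the block $\Delta_{-1}$ being handled by the ball case of the same inequality.

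Part (2) is the statement that for $s>0$ the inhomogeneous norm is equivalent to $\|\cdot\|_{L^{p}}+\|\cdot\|_{\dot{B}^{s}_{p,r}}$. For the bound $\|f\|_{L^{p}}+\|f\|_{\dot{B}^{s}_{p,r}}\lesssim\|f\|_{B^{s}_{p,r}}$ I will control the homogeneous \emph{low} blocks by the uniform estimate, $2^{qs}\|\dot{\Delta}_{q}f\|_{L^{p}}\lesssim2^{qs}\|f\|_{L^{p}}$ for $q<0$, which are $\ell^{r}$-summable against the geometric weight because $s>0$, while the high blocks agree with the inhomogeneous ones; and $\|f\|_{L^{p}}\lesssim\|f\|_{B^{s}_{p,r}}$ comes from (3) together with (1). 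Conversely, the high inhomogeneous blocks coincide with the homogeneous ones, and the single low block $\|\Delta_{-1}f\|_{L^{p}}\lesssim\|f\|_{L^{p}}$ by Young, giving $\|f\|_{B^{s}_{p,r}}\lesssim\|f\|_{L^{p}}+\|f\|_{\dot{B}^{s}_{p,r}}$. Finally, part (5) combines part (4) at $\tilde{p}=\infty$, namely $\dot{B}^{n/p}_{p,1}\hookrightarrow\dot{B}^{0}_{\infty,1}$, with the observation that $\sum_{q}\|\dot{\Delta}_{q}f\|_{L^{\infty}}<\infty$ makes the series $\sum_{q}\dot{\Delta}_{q}f$ converge uniformly; since each band-limited block is continuous and decays at infinity, the uniform limit lies in $\mathcal{C}_{0}$, and likewise in the inhomogeneous case.

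The one genuinely delicate point, on which I would spend the most care, is the interface between the homogeneous and inhomogeneous decompositions near the zero frequency, which underlies both the embedding $\dot{B}^{0}_{p,1}\hookrightarrow B^{0}_{p,1}$ and the equivalence in part (2): one must correctly account for which homogeneous blocks $\dot{\Delta}_{q}f$ with $q\le-1$ are absorbed into $\Delta_{-1}f$, and invoke the restriction $f\in S'_{0}$ (distributions modulo polynomials) so that the low-frequency identity $\mathcal{F}\Psi=\sum_{q\le-1}\mathcal{F}\Phi_{q}$ is legitimate. A secondary subtlety is the upgrade from an $L^{\infty}$ bound to genuine membership in $\mathcal{C}_{0}$ in part (5), where the uniform convergence of the block series, rather than mere boundedness, is what guarantees continuity and decay. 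Everything else reduces to manipulations of $\ell^{r}$ norms and the two elementary inequalities already cited; full details follow the standard treatment in \cite{BCD}.
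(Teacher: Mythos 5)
The paper offers no proof of this lemma at all: it is quoted as standard background, with the reader referred to the book \cite{BCD}, so there is no argument of the paper's to compare against. Your proposal is correct and is precisely the standard treatment — the uniform bound $\|\dot{\Delta}_{q}f\|_{L^{p}}\lesssim\|f\|_{L^{p}}$ from Young's inequality plus scaling, block-by-block Bernstein for (4), geometric summation for (3), the low/high frequency splitting for (1) and (2), and uniform convergence of the block series for (5) — and you correctly flag the two genuine subtleties (the low-frequency interface, i.e. $\mathcal{F}\Psi=\sum_{q\le-1}\mathcal{F}\Phi_{q}$ away from the origin for the canonical representative in $S_{0}'$, and the upgrade to $\mathcal{C}_{0}$). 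Two small remarks: in part (2) the bound $\|f\|_{L^{p}}\lesssim\|f\|_{B^{s}_{p,r}}$ needs the inhomogeneous analogue of (1), namely $B^{0}_{p,1}\hookrightarrow L^{p}$, which follows by the same triangle-inequality argument over $q\ge-1$ rather than from (1) itself; and in part (5) your claim that each block ``decays at infinity'' is valid only for $p<\infty$ (each block is then a Schwartz function convolved with an $L^{p}$ function), and indeed the homogeneous embedding fails for $p=\infty$ (e.g. $\sin x_{1}\in\dot{B}^{0}_{\infty,1}\setminus\mathcal{C}_{0}$), so the restriction $p<\infty$ in the lemma should be read as applying to both embeddings — exactly as your own argument requires.
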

Let us state the Moser-type product estimates, which plays an important role in the estimate of bilinear
terms.
\begin{prop}\label{prop3.1}
Let $s>0$ and $1\leq
p,r\leq\infty$. Then $\dot{B}^{s}_{p,r}\cap L^{\infty}$ is an algebra and
$$
\|fg\|_{\dot{B}^{s}_{p,r}}\lesssim \|f\|_{L^{\infty}}\|g\|_{\dot{B}^{s}_{p,r}}+\|g\|_{L^{\infty}}\|f\|_{\dot{B}^{s}_{p,r}}.
$$
Let $s_{1},s_{2}\leq n/p$ such that $s_{1}+s_{2}>n\max\{0,\frac{2}{p}-1\}. $  Then one has
$$\|fg\|_{\dot{B}^{s_{1}+s_{2}-n/p}_{p,1}}\lesssim \|f\|_{\dot{B}^{s_{1}}_{p,1}}\|g\|_{\dot{B}^{s_{2}}_{p,1}}.$$
\end{prop}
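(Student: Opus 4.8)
The plan is to prove both inequalities through Bony's paraproduct decomposition. Writing $S_{q-1}f:=\sum_{k\leq q-2}\dot{\Delta}_k f$ for the low-frequency truncation and $\widetilde{\dot{\Delta}}_q:=\dot{\Delta}_{q-1}+\dot{\Delta}_q+\dot{\Delta}_{q+1}$, I decompose
$$fg=T_fg+T_gf+R(f,g),\quad T_fg:=\sum_q S_{q-1}f\,\dot{\Delta}_q g,\quad R(f,g):=\sum_q \dot{\Delta}_q f\,\widetilde{\dot{\Delta}}_q g.$$
The two paraproducts are spectrally supported in dyadic annuli $\{|\xi|\approx 2^q\}$, whereas each block of the remainder is supported in a ball $\{|\xi|\lesssim 2^q\}$; this distinction governs the summation conditions and is exploited through the two parts of Lemma \ref{lem3.1}.

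For the first (algebra) estimate I bound the three pieces in $\dot{B}^s_{p,r}$ separately. For $T_fg$ the annular localization forces $\dot{\Delta}_j(S_{q-1}f\,\dot{\Delta}_q g)$ to vanish unless $|j-q|\leq N_0$, so H\"older together with $\|S_{q-1}f\|_{L^\infty}\lesssim\|f\|_{L^\infty}$ gives $\|\dot{\Delta}_j(T_fg)\|_{L^p}\lesssim\sum_{|j-q|\leq N_0}\|f\|_{L^\infty}\|\dot{\Delta}_q g\|_{L^p}$; weighting by $2^{js}$ and taking the $\ell^r$-norm (a finite-width convolution) yields $\|T_fg\|_{\dot{B}^s_{p,r}}\lesssim\|f\|_{L^\infty}\|g\|_{\dot{B}^s_{p,r}}$, and $T_gf$ is symmetric. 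For $R(f,g)$ the ball localization only gives $\dot{\Delta}_j R=\sum_{q\geq j-N_0}\dot{\Delta}_j(\dot{\Delta}_q f\,\widetilde{\dot{\Delta}}_q g)$, so after $\|\dot{\Delta}_q f\,\widetilde{\dot{\Delta}}_q g\|_{L^p}\leq\|f\|_{L^\infty}\|\widetilde{\dot{\Delta}}_q g\|_{L^p}$ I am left with the one-sided sum $2^{js}\|\dot{\Delta}_j R\|_{L^p}\lesssim\|f\|_{L^\infty}\sum_{q\geq j-N_0}2^{(j-q)s}\,2^{qs}\|\widetilde{\dot{\Delta}}_q g\|_{L^p}$; here the hypothesis $s>0$ makes the kernel $(2^{ks})_{k\leq N_0}$ summable, so Young's inequality closes the bound. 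Adding the three contributions proves both the stated inequality and the algebra property.

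For the second estimate I keep the same decomposition but measure everything in $\dot{B}^{s_1+s_2-n/p}_{p,1}$. In the paraproduct $T_fg$ there is no $L^\infty$ bound on $f$, so I estimate $\|S_{q-1}f\|_{L^\infty}\lesssim\sum_{k\leq q-2}2^{kn/p}\|\dot{\Delta}_k f\|_{L^p}\lesssim 2^{q(n/p-s_1)}\|f\|_{\dot{B}^{s_1}_{p,1}}$ by Bernstein (Lemma \ref{lem3.1}(i)); this is exactly where $s_1\leq n/p$ is needed, so that the exponent $n/p-s_1$ is nonnegative and the geometric sum stays controlled (the borderline $s_1=n/p$ being the embedding $\dot{B}^{n/p}_{p,1}\hookrightarrow L^\infty$ of Lemma \ref{lem3.2}(5)). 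Inserting this into the annular bound and weighting by $2^{j(s_1+s_2-n/p)}$ produces a finite-width convolution of the $\ell^1$-sequences $(2^{ks_i}\|\dot{\Delta}_k\cdot\|_{L^p})_k$, giving $\|T_fg\|_{\dot{B}^{s_1+s_2-n/p}_{p,1}}\lesssim\|f\|_{\dot{B}^{s_1}_{p,1}}\|g\|_{\dot{B}^{s_2}_{p,1}}$, and likewise for $T_gf$ using $s_2\leq n/p$.

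The remainder is the main obstacle, and it is where the threshold $s_1+s_2>n\max\{0,\tfrac{2}{p}-1\}$ enters. When $p\geq2$ the product of two blocks lies in $L^{p/2}$ with $p/2\geq1$, so I estimate $R(f,g)$ in the auxiliary space $\dot{B}^{s_1+s_2}_{p/2,1}$ — the one-sided sum over $q\geq j-N_0$ converging precisely because $s_1+s_2>0$ — and then descend to the target via the embedding $\dot{B}^{s_1+s_2}_{p/2,1}\hookrightarrow\dot{B}^{s_1+s_2-n/p}_{p,1}$ of Lemma \ref{lem3.2}(4), whose loss is exactly $n/p$. When $1\leq p<2$ this route is blocked because $p/2<1$ is no longer an admissible Besov exponent and one must reach $L^p$ directly; the delicate point is to bound $\|\dot{\Delta}_q f\,\widetilde{\dot{\Delta}}_q g\|_{L^p}$ through a H\"older split combined with the band-limited Bernstein inequality, at a cost measured by $n(\tfrac{2}{p}-1)$, which is the source of the sharper lower bound on $s_1+s_2$ for small $p$ and the only step demanding genuine care. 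Once this block estimate is in hand, the same one-sided $\ell^1$-convolution argument, now convergent thanks to $s_1+s_2>n(\tfrac{2}{p}-1)$, finishes the remainder, and summing the three pieces completes the proof.
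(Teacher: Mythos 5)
The paper itself offers no proof of Proposition \ref{prop3.1}: it is quoted as a standard Moser-type estimate, with Section \ref{sec:3} deferring to the literature (the book of Bahouri--Chemin--Danchin). So your Bony-paraproduct route is exactly the expected one, and most of it is sound: the algebra estimate (both paraproducts, plus the remainder closed by the one-sided convolution using $s>0$), the two paraproduct bounds for the second inequality using $s_{1},s_{2}\leq n/p$, and the remainder for $p\geq2$ via $\dot{B}^{s_{1}+s_{2}}_{p/2,1}\hookrightarrow\dot{B}^{s_{1}+s_{2}-n/p}_{p,1}$ are all correct.

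The gap is in the one step you yourself flag as delicate: the remainder for $1\leq p<2$. The block estimate you propose, namely $\|\dot{\Delta}_{q}f\,\widetilde{\dot{\Delta}}_{q}g\|_{L^{p}}\lesssim 2^{qn(\frac{2}{p}-1)}\|\dot{\Delta}_{q}f\|_{L^{p}}\|\widetilde{\dot{\Delta}}_{q}g\|_{L^{p}}$, is false for $1<p<2$: any H\"older-plus-Bernstein scheme landing the product of two $L^{p}$-blocks back in $L^{p}$ costs $2^{qn/p}$, and this exponent is attained (take $\dot{\Delta}_{q}f\approx e^{i2^{q}x_{1}}\phi(2^{q}x)$ and $\widetilde{\dot{\Delta}}_{q}g\approx e^{-i2^{q}x_{1}}\phi(2^{q}x)$, for which the ratio of the two sides is $\approx2^{qn/p}$). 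With the true cost $2^{qn/p}$ your convolution argument needs $s_{1}+s_{2}>n/p$, strictly stronger than the hypothesis; with your claimed cost the exponents do not even balance --- a stray factor $2^{-qn(1-\frac{1}{p})}$ is left over, which is unbounded as $q\to-\infty$, so the sum cannot be closed either way (only at $p=1$, where $n(\frac{2}{p}-1)=\frac{n}{p}$, does your version go through). The correct mechanism splits the cost between the two frequencies: writing $p'=p/(p-1)$, first bound $\|\dot{\Delta}_{q}f\,\widetilde{\dot{\Delta}}_{q}g\|_{L^{1}}\leq\|\dot{\Delta}_{q}f\|_{L^{p}}\|\widetilde{\dot{\Delta}}_{q}g\|_{L^{p'}}\lesssim 2^{qn(\frac{2}{p}-1)}\|\dot{\Delta}_{q}f\|_{L^{p}}\|\widetilde{\dot{\Delta}}_{q}g\|_{L^{p}}$ (Bernstein $L^{p}\to L^{p'}$, legitimate since $p\leq2\leq p'$), and only then return to $L^{p}$ on the outer block, $\|\dot{\Delta}_{j}(\dot{\Delta}_{q}f\,\widetilde{\dot{\Delta}}_{q}g)\|_{L^{p}}\lesssim 2^{jn(1-\frac{1}{p})}\|\dot{\Delta}_{q}f\,\widetilde{\dot{\Delta}}_{q}g\|_{L^{1}}$. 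The $j$-dependent factor $2^{jn/p'}$ combines with the weight $2^{j(s_{1}+s_{2}-n/p)}$ to give $2^{j(s_{1}+s_{2}-n(\frac{2}{p}-1))}$, the $q$-dependent factors cancel exactly, and the resulting kernel $2^{k(s_{1}+s_{2}-n(\frac{2}{p}-1))}$, $k\leq N_{0}$, is summable precisely under the stated hypothesis $s_{1}+s_{2}>n(\frac{2}{p}-1)$. The hypothesis thus enters through this $j$/$q$ splitting of the Bernstein cost, not through a pure block-in-$L^{p}$ bound; without that splitting your $p<2$ case does not close.
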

Finally, we state a result of continuity for the composition function.
\begin{prop}\label{prop3.2}
Let $s>0$, $1\leq p, r\leq \infty$ and $F'\in
W^{[s]+1,\infty}_{loc}(I;\mathbb{R})$. Assume that $v\in \dot{B}^{s}_{p,r}\cap
L^{\infty},$ then $F(v)\in \dot{B}^{s}_{p,r}$ and
$$\|F(v)\|_{\dot{B}^{s}_{p,r}}\lesssim
(1+\|v\|_{L^{\infty}})^{n}\|F'\|_{W^{[s]+1,\infty}(I)}\|v\|_{\dot{B}^{s}_{p,r}}.$$
\end{prop}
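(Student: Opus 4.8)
The plan is to prove the estimate by a telescoping (Meyer-type) argument combined with the mean value theorem, treating first the model range $0<s<1$ and then the general case by transferring derivatives onto a slowly varying multiplier. Since we work modulo polynomials in the homogeneous setting, I may assume without loss of generality that $F(0)=0$, so that $\|F'\|_{W^{[s]+1,\infty}(I)}$ controls $F$ on the range of $v$. The starting point is the low-frequency cutoff $S_{k}:=\sum_{q<k}\dot{\Delta}_{q}$, for which $S_{k+1}v-S_{k}v=\dot{\Delta}_{k}v$, with $S_{k}v\to0$ as $k\to-\infty$ and $S_{k}v\to v$ as $k\to+\infty$ in $S'_{0}$. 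Writing the telescoping series and applying the fundamental theorem of calculus gives the representation
\[
F(v)=\sum_{k\in\mathbb{Z}}\big(F(S_{k+1}v)-F(S_{k}v)\big)=\sum_{k\in\mathbb{Z}}m_{k}\,\dot{\Delta}_{k}v,\qquad m_{k}:=\int_{0}^{1}F'\big(S_{k}v+\tau\dot{\Delta}_{k}v\big)\,d\tau,
\]
where $\|m_{k}\|_{L^{\infty}}\le\|F'\|_{L^{\infty}}$ uniformly in $k$.

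For the model case $0<s<1$ I would bypass the representation above and use instead the finite-difference characterization of $\dot{B}^{s}_{p,r}$, namely that the norm is equivalent to the $L^{r}(dh/|h|^{n})$-norm of $|h|^{-s}\|\tau_{h}f-f\|_{L^{p}}$ with $\tau_{h}f=f(\cdot-h)$. The mean value inequality gives the pointwise bound $|F(v(x-h))-F(v(x))|\le\|F'\|_{L^{\infty}}\,|v(x-h)-v(x)|$, whence $\|\tau_{h}F(v)-F(v)\|_{L^{p}}\le\|F'\|_{L^{\infty}}\,\|\tau_{h}v-v\|_{L^{p}}$ and the desired estimate follows at once (here $[s]+1=1$, consistent with the hypothesis). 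This already exhibits the mechanism by which $\|F'\|$ and $\|v\|_{\dot{B}^{s}_{p,r}}$ enter, without any power of $1+\|v\|_{L^\infty}$.

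For general $s>0$ the work lies in estimating $\dot{\Delta}_{j}F(v)=\sum_{k}\dot{\Delta}_{j}(m_{k}\dot{\Delta}_{k}v)$ in $L^{p}$, weighting by $2^{js}$, and summing in $\ell^{r}(j)$. The key observation is that $\dot{\Delta}_{k}v$ is spectrally localized near $|\xi|\sim2^{k}$ while $m_{k}$ is slowly varying at that scale: by the Fa\`a di Bruno formula together with Bernstein's inequality (Lemma \ref{lem3.1}, giving $\|\nabla^{a}S_{k}v\|_{L^{\infty}}\lesssim 2^{ka}\|v\|_{L^{\infty}}$) one controls $\|\nabla^{\ell}m_{k}\|_{L^{\infty}}\lesssim 2^{k\ell}(1+\|v\|_{L^{\infty}})^{\ell}\|F'\|_{W^{\ell,\infty}}$ for $0\le\ell\le[s]+1$, the powers of $(1+\|v\|_{L^{\infty}})$ arising from the number of factors $\nabla^{a_{i}}S_{k}v$ produced by the chain rule (their count fixes the exponent claimed in the statement). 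Splitting the sum at $k\approx j$, the terms with $k\gtrsim j$ are bounded directly by $\|m_{k}\|_{L^{\infty}}\|\dot{\Delta}_{k}v\|_{L^{p}}$, whereas for $k\ll j$ one integrates by parts against $\dot{\Delta}_{j}$, moving $[s]+1$ derivatives onto $m_{k}$ and gaining a factor $2^{(k-j)([s]+1)}$. Multiplying by $2^{js}$ and summing then produces geometric series in $j-k$ with ratios $2^{-(j-k)s}$ and $2^{-(j-k)([s]+1-s)}$, both convergent precisely because $0<s<[s]+1$; a convolution (Young) inequality for the $\ell^{r}$ sums yields $\|F(v)\|_{\dot{B}^{s}_{p,r}}\lesssim(1+\|v\|_{L^{\infty}})^{[s]+1}\|F'\|_{W^{[s]+1,\infty}}\|v\|_{\dot{B}^{s}_{p,r}}$, which is the asserted bound.

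The main obstacle is the last paragraph: establishing the smoothness estimates on the multiplier $m_{k}$ uniformly in $k$ and handling the off-diagonal interactions $k\ll j$, where the spectral support of $m_{k}\dot{\Delta}_{k}v$ is \emph{not} contained in the annulus selected by $\dot{\Delta}_{j}$, so that one must genuinely exploit the regularity $F'\in W^{[s]+1,\infty}$ to transfer derivatives and create summable decay. The convergence of the telescoping series in $S'_{0}$ and the bookkeeping of the chain-rule factors (which fixes the power of $1+\|v\|_{L^{\infty}}$) are the delicate points; everything else reduces to Bernstein's inequality and the summation of geometric series.
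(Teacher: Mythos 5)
The paper never proves Proposition 3.2: it is quoted as a standard continuity-of-composition result from the Littlewood--Paley literature (Meyer's ``first linearization''; see the book \cite{BCD} invoked at the start of Sect.~3), so there is no in-paper argument to compare yours against. Your proposal is exactly that classical proof, and its skeleton is sound: the telescoping identity $F(v)=\sum_{k}m_{k}\dot{\Delta}_{k}v$ with $m_{k}=\int_{0}^{1}F'\bigl(S_{k}v+\tau\dot{\Delta}_{k}v\bigr)\,d\tau$; the Fa\`a di Bruno/Bernstein bound $\|\nabla^{\ell}m_{k}\|_{L^{\infty}}\lesssim 2^{k\ell}(1+\|v\|_{L^{\infty}})^{\ell}\|F'\|_{W^{\ell,\infty}}$; the direct estimate $\|\dot{\Delta}_{j}(m_{k}\dot{\Delta}_{k}v)\|_{L^{p}}\le\|F'\|_{L^{\infty}}\|\dot{\Delta}_{k}v\|_{L^{p}}$ for $k\gtrsim j$; the transfer of $[s]+1$ derivatives onto $m_{k}$ for $k\ll j$ (via the vanishing moments of the kernel of $\dot{\Delta}_{j}$, equivalently dividing its symbol by $|\xi|^{[s]+1}$ on the annulus), which gains $2^{(k-j)([s]+1)}$; and $\ell^{r}$ summation by Young's inequality, convergent precisely because $0<s<[s]+1$. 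The finite-difference treatment of $0<s<1$ is also correct, though logically dispensable.

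Two caveats. First, this argument produces the exponent $[s]+1$ on $(1+\|v\|_{L^{\infty}})$, not the exponent $n$ appearing in the statement, so your closing claim that this ``is the asserted bound'' is literally inaccurate: for $s\ge n$ your bound is weaker than the stated one and does not imply it. In the paper's range of use this is harmless --- every application has $s\le\sigma_{c}=1+n/2$ with $n\ge3$, whence $[s]+1\le n$ and your bound implies the stated one --- and $[s]+1$ is in fact the exponent given in the standard references, so the ``$n$'' in the proposition should be read as a transcription artifact; but you should state this implication rather than assert identity. Second, the convergence points you flag are genuine and should be closed explicitly, though they are routine under the paper's convention $S'_{0}=S'/P$: the normalization $F(0)=0$ costs nothing since constants are polynomials; $v-S_{K}v\to0$ in $L^{p}$ as $K\to+\infty$ follows from $\sum_{q\ge K}\|\dot{\Delta}_{q}v\|_{L^{p}}\lesssim 2^{-Ks}\|v\|_{\dot{B}^{s}_{p,\infty}}$ together with the Lipschitz bound $|F(v)-F(S_{K}v)|\le\|F'\|_{L^{\infty}}|v-S_{K}v|$; and the vanishing of $F(S_{k}v)$ as $k\to-\infty$ is exactly where one uses that low frequencies are quotiented out (or, in the alternative $S'_{h}$ convention, that $\|S_{k}v\|_{L^{\infty}}\to0$). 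With those two points made precise, the proof is complete.
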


\section{The L-P pointwise energy estimates}\setcounter{equation}{0}\label{sec:4}
In this section, we consider the perturbation system at $\bar{V}$ of the linearized dissipative system (\ref{R-E10}) satisfying the [SK] condition in Definition \ref{defn2.4}, which reads precisely:
\begin{equation}
\left\{
\begin{array}{l}
\tilde{A}^{0}\tilde{w}_{t}+\sum^{n}_{j=1}\tilde{A}^{j}\tilde{w}_{x_{j}}+L\tilde{w}=0, \\
\tilde{w}_{0}=V_{0}-\bar{V},
\end{array} \right.\label{R-E23}
\end{equation}
with $\tilde{w}=V-\bar{V}$, where
$\tilde{A}^{0}=\tilde{A}^{0}(\bar{V}),
\tilde{A}^{j}=\tilde{A}^{j}(\bar{V})$ and $L=L(\bar{V})$ are the constant
matrices.

\begin{prop}\label{prop4.1}
If $\tilde{w}_{0}\in \dot{B}^{\sigma}_{2,1}(\mathbb{R}^{n})\cap \dot{B}^{-s}_{2,\infty}(\mathbb{R}^{n})$ for $\sigma\geq0$ and $s>0$, then the solutions $\tilde{w}(t,x)$ of (\ref{R-E23}) has the decay estimate
\begin{equation}
\|\Lambda^{\ell}\tilde{w}\|_{B_{2,1}^{\sigma-\ell}}\lesssim \|\tilde{w}_{0}\|_{\dot{B}_{2,1}^{\sigma}\cap \dot{B}_{2,\infty}^{-s}}(1+t)^{-\frac{\ell+s}{2}} \label{R-E24}
\end{equation}
for $0\leq\ell\leq\sigma$. In particular, if $\tilde{w}_{0}\in \dot{B}^{\sigma}_{2,1}(\mathbb{R}^{n})\cap L^p(\mathbb{R}^{n})(1\leq p<2$), one further has
\begin{equation}
\|\Lambda^{\ell}\tilde{w}\|_{B_{2,1}^{\sigma-\ell}}\lesssim \|\tilde{w}_{0}\|_{\dot{B}_{2,1}^{\sigma}\cap L^{p}}(1+t)^{-\frac{n}{2}(\frac{1}{p}-\frac{1}{2})-\frac{\ell}{2}} \label{R-E25}
\end{equation}
for $0\leq\ell\leq\sigma$.
\end{prop}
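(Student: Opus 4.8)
The plan is to reduce everything to a single frequency-localized (Littlewood--Paley pointwise) energy estimate and then to sum the resulting dyadic series separately in the high- and low-frequency regimes. First I would apply the homogeneous block $\dot{\Delta}_q$ to (\ref{R-E23}); since the $\dot{\Delta}_q$ are Fourier multipliers they commute with the constant-coefficient operator, so $\dot{\Delta}_q\tilde{w}$ solves the same system. Taking the $L^2$ inner product with $\dot{\Delta}_q\tilde{w}$ and using that $\tilde{A}^0$ is symmetric positive definite, that each $\tilde{A}^j$ is symmetric (so its contribution integrates to zero), and that $L$ is symmetric nonnegative, yields
$$\frac{1}{2}\frac{d}{dt}(\tilde{A}^0\dot{\Delta}_q\tilde{w},\dot{\Delta}_q\tilde{w})_{L^2}+(L\dot{\Delta}_q\tilde{w},\dot{\Delta}_q\tilde{w})_{L^2}=0.$$
Because $L$ is only degenerate, this is not enough, and the crucial step is to recover the missing dissipation through the compensating matrix $\tilde{K}(\omega)$ furnished by the [SK] characterization in Theorem \ref{thm2.2}: working on the Fourier side and exploiting properties (i)--(iii), the correction term built from $\tilde{K}$ upgrades the above identity to a Lyapunov inequality of the form
$$\frac{d}{dt}\mathcal{E}_q+c\,\frac{2^{2q}}{1+2^{2q}}\,\mathcal{E}_q\leq0,\qquad \mathcal{E}_q\approx\|\dot{\Delta}_q\tilde{w}\|_{L^2}^2,$$
with the frequency-dependent rate dictated precisely by the bound $\mathrm{Re}\,\lambda(i\xi)\leq -c|\xi|^2/(1+|\xi|^2)$. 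Integrating gives the pointwise estimate $\|\dot{\Delta}_q\tilde{w}(t)\|_{L^2}\lesssim e^{-c\rho_q t}\|\dot{\Delta}_q\tilde{w}_0\|_{L^2}$ with $\rho_q=2^{2q}/(1+2^{2q})$.

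Next I would split $\|\Lambda^{\ell}\tilde{w}\|_{B^{\sigma-\ell}_{2,1}}$ into its high- and low-frequency contributions. For the high frequencies ($q\geq0$, where $\rho_q\approx1$), the blocks decay uniformly like $e^{-ct}$, so after applying Lemma \ref{lem3.1} to turn $\Lambda^{\ell}$ into the factor $2^{q\ell}$ one obtains a bound $\lesssim e^{-ct}\|\tilde{w}_0\|_{\dot{B}^{\sigma}_{2,1}}$ (this needs $0\le\ell\le\sigma$ so that the target index $\sigma-\ell$ is nonnegative), which decays faster than any polynomial and is therefore harmless. For the low-frequency part I re-expand the single inhomogeneous block $\Delta_{-1}\Lambda^{\ell}\tilde{w}$ into the homogeneous pieces $\dot{\Delta}_q$, $q\le0$ (where $\rho_q\approx2^{2q}$), on which the pointwise estimate applies; this is where the real work lies. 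Bounding $\|\dot{\Delta}_q\tilde{w}_0\|_{L^2}\leq 2^{qs}\|\tilde{w}_0\|_{\dot{B}^{-s}_{2,\infty}}$ directly from the definition of the negative-order norm, the low-frequency contribution is controlled by
$$\|\tilde{w}_0\|_{\dot{B}^{-s}_{2,\infty}}\sum_{q\leq0}2^{q(\ell+s)}e^{-c2^{2q}t}.$$

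The key estimate --- and the step I expect to be the main obstacle --- is to show that this dyadic sum decays at exactly the claimed rate, namely $\sum_{q\leq0}2^{q(\ell+s)}e^{-c2^{2q}t}\lesssim(1+t)^{-(\ell+s)/2}$. I would prove it by splitting at the critical index $q_0$ determined by $2^{q_0}\approx t^{-1/2}$: for $q\leq q_0$ the exponential is $\approx1$ and the geometric sum $\sum2^{q(\ell+s)}$ (convergent since $\ell+s>0$) is dominated by its largest term $\approx t^{-(\ell+s)/2}$, while for $q_0<q\leq0$ the substitution $x_q=2^{2q}t$ converts the tail into $t^{-(\ell+s)/2}$ times a convergent series $\sum x_q^{(\ell+s)/2}e^{-cx_q}$; for $t\leq1$ one simply uses convergence of $\sum_{q\leq0}2^{q(\ell+s)}$. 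Combining the exponential high-frequency bound with this polynomial low-frequency bound yields (\ref{R-E24}). The delicate point throughout is that, unlike the classical $L^1$--$L^2$ framework based on the Hausdorff--Young inequality, here one cannot integrate a Fourier symbol directly; the decay must instead be squeezed out of the interplay between the block-by-block exponential rate $e^{-c2^{2q}t}$ and the single supremum-type bound supplied by $\dot{B}^{-s}_{2,\infty}$.

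Finally, the second estimate (\ref{R-E25}) follows from (\ref{R-E24}) with no new analysis: chaining the embeddings $L^p\hookrightarrow\dot{B}^0_{p,\infty}$ from Lemma \ref{lem3.2}(1) and $\dot{B}^0_{p,\infty}\hookrightarrow\dot{B}^{-n(\frac1p-\frac12)}_{2,\infty}$ from Lemma \ref{lem3.2}(4) gives $L^p\hookrightarrow\dot{B}^{-s}_{2,\infty}$ with $s=n(\frac1p-\frac12)=\gamma_{p,2}$, and the range $1\leq p<2$ corresponds exactly to $0<s\leq n/2$. Substituting this value of $s$ into (\ref{R-E24}) reproduces the rate $(1+t)^{-\frac{n}{2}(\frac1p-\frac12)-\frac{\ell}{2}}$.
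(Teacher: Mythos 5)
Your argument is correct, but the route you take through the low-frequency part is genuinely different from the paper's proof of Proposition \ref{prop4.1}: it is, in essence, the method the paper reserves for Proposition \ref{prop4.2}. The paper never decomposes the low frequencies into annuli; it keeps the single inhomogeneous block $\tilde w_{-1}=\Delta_{-1}\tilde w$ intact, derives from the pointwise estimate the inequality $\frac{d}{dt}\mathcal{E}[\tilde w_{-1}]^{2}+c\|\Lambda^{\ell+1}\tilde w_{-1}\|_{L^{2}}^{2}\leq 0$, and then closes it with the interpolation inequality of Lemma \ref{lem8.2} combined with the propagation property $\|\tilde w(t)\|_{\dot B^{-s}_{2,\infty}}\leq\|\tilde w_{0}\|_{\dot B^{-s}_{2,\infty}}$ (proved separately in (\ref{R-E41})--(\ref{R-E42})); this produces a nonlinear differential inequality whose solution decays like $(1+t)^{-(\ell+s)}$, giving (\ref{R-E44}). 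You instead expand $\Delta_{-1}$ into homogeneous blocks, extract blockwise exponential decay $e^{-c2^{2q}t}$ from the same Lyapunov functional localized to the annulus $|\xi|\approx 2^{q}$, bound the data blocks by $2^{qs}\|\tilde w_{0}\|_{\dot B^{-s}_{2,\infty}}$, and sum the dyadic series by splitting at $2^{q_{0}}\approx t^{-1/2}$ --- precisely the Sohinger--Strain-type computation in (\ref{R-E388})--(\ref{R-E448}). The trade-off is real: your summation argument needs only $\ell+s>0$, never uses the solution's $\dot B^{-s}_{2,\infty}$ norm at positive times, and dispenses with Lemma \ref{lem8.2} altogether (whose restrictions $k\geq0$, $\varrho>0$ are exactly what force the paper to switch to your method in Proposition \ref{prop4.2}), so for this proposition your proof is, if anything, leaner and more general. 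What the paper's interpolation/ODE route buys is independence from the dyadic structure of the low-frequency piece: it works verbatim when the low frequencies are carved out by a single cut-off rather than by annuli, which is how the same scheme is run in the decay framework of Appendix A (Propositions \ref{prop7.1}--\ref{prop7.2}). Your treatment of the high frequencies and your deduction of (\ref{R-E25}) from (\ref{R-E24}) via $L^{p}\hookrightarrow\dot B^{0}_{p,\infty}\hookrightarrow\dot B^{-n(1/p-1/2)}_{2,\infty}$ coincide with the paper's (the latter embedding is Lemma \ref{lem8.5}).
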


\begin{proof}
Applying the inhomogeneous localization operator $\Delta_{q}(q\geq-1)$ to (\ref{R-E23}) gives
\begin{eqnarray}
\tilde{A}^{0}\partial_{t}\Delta_{q}\tilde{w} + \sum_{j=1}^{n}\tilde{A}^{j}\Delta_{q}\tilde{w}_{x_{j}}+L\Delta_{q}\tilde{w}=0. \label{R-E26}
\end{eqnarray}
Then we perform the Fourier transform of (\ref{R-E26}) to get
\begin{eqnarray}
\tilde{A}^{0}\partial_{t}\widehat{\Delta_{q}\tilde{w}} + (i|\xi|\tilde{A}(\omega)+L)\widehat{\Delta_{q}\tilde{w}}=0, \label{R-E27}
\end{eqnarray}
where $\omega=\xi/|\xi|$ and $\tilde{A}(\omega)=\Sigma_{j}\tilde{A}^{j}\omega_{j}.$

From Definition \ref{defn2.2} and Theorem \ref{thm2.1}, there exists a constant $c_{0}>0$ such that the following inequality holds
\begin{eqnarray}
\frac{1}{2}\frac{d}{dt}(\tilde{A}^{0}\widehat{\Delta_{q}\tilde{w}},\widehat{\Delta_{q}\tilde{w}})+c_{0}|(I-\mathcal{P})\widehat{\Delta_{q}\tilde{w}}|^2\leq 0. \label{R-E28}
\end{eqnarray}

On the other hand, multiplying (\ref{R-E27}) by $-i|\xi|\tilde{K}(\omega)$, performing the inner product with $\widehat{\Delta_{q}\tilde{w}}$ and then taking the real part of each term in the resulting equality, we obtain
\begin{eqnarray}
&&\frac{1}{2}\frac{d}{dt}\mathrm{Im}(|\xi|\tilde{K}(\omega)\tilde{A}^{0}\widehat{\Delta_{q}\tilde{w}},\widehat{\Delta_{q}\tilde{w}})
+|\xi|^2([\tilde{K}(\omega)\tilde{A}(\omega)]'\widehat{\Delta_{q}\tilde{w}},\widehat{\Delta_{q}\tilde{w}})\nonumber\\&=&|\xi|\mathrm{Im}(\tilde{K}(\omega)L\widehat{\Delta_{q}\tilde{w}},\widehat{\Delta_{q}\tilde{w}}). \label{R-E29}
\end{eqnarray}
By Theorem 2.2, it follows from $[\tilde{K}(\omega)\tilde{A}(\omega)]'+L$ is positive definite for $\omega\in \mathbb{S}^{n-1}$ that
\begin{eqnarray}
|\xi|^2([\tilde{K}(\omega)\tilde{A}(\omega)]'\widehat{\Delta_{q}\tilde{w}},\widehat{\Delta_{q}\tilde{w}})\geq c_{1}|\xi|^2|\widehat{\Delta_{q}\tilde{w}}|^2-|\xi|^2|(I-\mathcal{P})\widehat{\Delta_{q}\tilde{w}}|^2 \label{R-30}
\end{eqnarray}
for some constant $c_{1}>0$. Moreover,  by virtue of Young's inequality, the right side of (\ref{R-E29}) can be estimated as
\begin{eqnarray}
\Big||\xi|\mathrm{Im}(\tilde{K}(\omega)L\widehat{\Delta_{q}\tilde{w}},\widehat{\Delta_{q}\tilde{w}})\Big|\leq \epsilon |\xi|^2|\widehat{\Delta_{q}\tilde{w}}|^2+C_{\epsilon}|(I-\mathcal{P})\widehat{\Delta_{q}\tilde{w}}|^2 \label{R-E31}
\end{eqnarray}
for any $\epsilon>0$, where $C_{\epsilon}$ is a constant depending on $\epsilon$. Combining  (\ref{R-E29})-(\ref{R-E31}), we can deduce that
\begin{eqnarray}
&&\frac{1}{2}\frac{d}{dt}\mathrm{Im}\Big(\frac{|\xi|}{1+|\xi|^2}\tilde{K}(\omega)\tilde{A}^{0}\widehat{\Delta_{q}\tilde{w}},\widehat{\Delta_{q}\tilde{w}}\Big)
+\frac{c_{1}}{2}\frac{|\xi|^2}{1+|\xi|^2}|\widehat{\Delta_{q}\tilde{w}}|^2\nonumber\\ &\leq& C|(I-\mathcal{P})\widehat{\Delta_{q}\tilde{w}}|^2, \label{R-E32}
\end{eqnarray}
where we have chosen $\epsilon$ such that $\epsilon\leq c_{1}/2$.

Therefore, it follows from (\ref{R-E32}) and (\ref{R-E28}) that the symmetric system (\ref{R-E23}) admits a Lyapunov function of the form
\begin{eqnarray}
E[\widehat{\Delta_{q}\tilde{w}}]=\frac{1}{2}(\tilde{A}^{0}\widehat{\Delta_{q}\tilde{w}},\widehat{\Delta_{q}\tilde{w}})
+\frac{\kappa}{2}\mathrm{Im}\Big(\frac{|\xi|}{1+|\xi|^2}K(\omega)\tilde{A}^{0}\widehat{\Delta_{q}\tilde{w}},\widehat{\Delta_{q}\tilde{w}}\Big), \label{R-E33}
\end{eqnarray}
where $\kappa>0$ is a small constant. Actually, by straightforward
computations, we show that
\begin{eqnarray}
\frac{d}{dt}E[\widehat{\Delta_{q}\tilde{w}}]+(c_{0}-\kappa C)|(I-\mathcal{P})\widehat{\Delta_{q}\tilde{w}}|^2+\frac{c_{1}\kappa|\xi|^2}{2(1+|\xi|^2)}|\widehat{\Delta_{q}\tilde{w}}|^2\leq0, \label{R-E34}
\end{eqnarray}
where we choose $\kappa>0$ so small that $c_{0}-\kappa C\geq0$ and $E[\widehat{\Delta_{q}\tilde{w}}]\approx|\widehat{\Delta_{q}\tilde{w}}|^2$, since $\tilde{A}^{0}$ is positive definite. To get the desired decay estimates, in the following, we divide (\ref{R-E34}) into high-frequency and low-frequency cases, respectively.

\underline{\textit{Case 1}($q\geq0$)}

In this case, $|\xi|\sim 2^{q}$, there exists a constant $c_{2}>0$ such that the differential inequality holds
\begin{eqnarray}
\frac{d}{dt}E[\widehat{\Delta_{q}\tilde{w}}]+c_{2}|\widehat{\Delta_{q}\tilde{w}}|^2\leq0, \label{R-E35}
\end{eqnarray}
which implies that
\begin{eqnarray}
\|\Delta_{q}\tilde{w}\|_{L^2}\lesssim e^{-c_{2}t}\|\Delta_{q}\tilde{w}_{0}\|_{L^2}. \label{R-E36}
\end{eqnarray}
Then multiplying the factor $2^{q\sigma}$  on both sides of (\ref{R-E36}) and summing the resulting inequality over $q\geq0$, we arrive at
\begin{eqnarray}
&&\sum_{q\geq0}2^{q(\sigma-\ell)}\|\Delta_{q}\Lambda^{\ell}\tilde{w}\|_{L^2}\nonumber\\&\lesssim& e^{-c_{2}t}\sum_{q\geq0}2^{q(\sigma-\ell)}\|\Delta_{q}\Lambda^{\ell}\tilde{w}_{0}\|_{L^2}\lesssim e^{-c_{2}t} \|\tilde{w}_{0}\|_{\dot{B}_{2,1}^{\sigma}}, \label{R-E37}
\end{eqnarray}
where we have used Lemma \ref{lem3.1}.

\underline{\textit{Case 2}($q=-1$)}

In this case, there exists a constant $c_{3}>0$ such that the differential inequality holds
\begin{eqnarray}
\frac{d}{dt}E[\widehat{\tilde{w}_{-1}}]+c_{3}|\xi|^2|\widehat{\tilde{w}_{-1}}|^2\leq0, \label{R-E38}
\end{eqnarray}
where $\tilde{w}_{-1}:=\Delta_{-1}\tilde{w}.$

Multiplying (\ref{R-E38}) with $|\xi|^{2\ell}$ and integrating the resulting inequality  over $\mathbb{R}^{n}_{\xi}$, with the aid of Plancherel's theorem, we arrive at
\begin{eqnarray}
\frac{d}{dt}\mathcal{E}[\tilde{w}_{-1}]^2+c_{3}\|\Lambda^{\ell+1}\tilde{w}_{-1}\|^2_{L^2}\leq0, \label{R-E39}
\end{eqnarray}
where $$\mathcal{E}[\tilde{w}_{-1}]:=\Big(\int_{\mathbb{R}^{n}_{\xi}}|\xi|^{2\ell}E[\widehat{\tilde{w}_{-1}}]d\xi\Big)^{1/2}\approx\|\Lambda^{\ell}\tilde{w}_{-1}\|_{L^2}.$$
According to the interpolation inequality related the Besov space $\dot{B}^{-s}_{2,\infty}$ (see Lemma \ref{lem8.2}), we arrive at
\begin{eqnarray}
\|\Lambda^{\ell}\tilde{w}_{-1}\|_{L^2} &\lesssim&\|\Lambda^{\ell+1}\tilde{w}_{-1}\|^{\theta}_{L^2}\|\tilde{w}_{-1}\|^{1-\theta}_{\dot{B}^{-s}_{2,\infty}}\ \ \  \Big(\theta=\frac{\ell+s}{\ell+1+s}\Big)\nonumber\\ &\lesssim& \|\Lambda^{\ell+1}\tilde{w}_{-1}\|^{\theta}_{L^2}\|\tilde{w}\|^{1-\theta}_{\dot{B}^{-s}_{2,\infty}}, \label{R-E40}
\end{eqnarray}

In addition, by applying the homogeneous operator $\dot{\Delta}_{q}(q\in \mathbb{Z})$ to the system (\ref{R-E23}) and performing the inter product with
$\dot{\Delta}_{q}\tilde{w}$, we can infer that there exists a constant $c_{4}>0$ such that
\begin{eqnarray}
\frac{1}{2}\frac{d}{dt}(\tilde{A}^{0}\widehat{\dot{\Delta}_{q}\tilde{w}},\widehat{\dot{\Delta}_{q}\tilde{w}})+c_{4}\|(I-\mathcal{P})\dot{\Delta}_{q}\tilde{w}\|^2_{L^2}\leq0, \label{R-E41}
\end{eqnarray}
which immediately leads to
\begin{eqnarray}
\|\tilde{w}\|_{\dot{B}^{-s}_{2,\infty}}\leq\|\tilde{w}_{0}\|_{\dot{B}^{-s}_{2,\infty}}.  \label{R-E42}
\end{eqnarray}
Together with (\ref{R-E40}) and (\ref{R-E42}), we are led to the differential inequality
\begin{eqnarray}
\frac{d}{dt}\mathcal{E}[\tilde{w}_{-1}]^2+C\|\tilde{w}_{0}\|_{\dot{B}^{-s}_{2,\infty}}^{-\frac{2}{s}}(\|\Lambda^{\ell}\tilde{w}_{-1}\|^2_{L^2})^{1+\frac{1}{\ell+s}}\leq0,\label{R-E43}
\end{eqnarray}
which yields
\begin{eqnarray}
\|\Lambda^{\ell}w_{-1}\|_{L^2}\lesssim\|\tilde{w}_{0}\|_{\dot{B}^{-s}_{2,\infty}}(1+t)^{-\frac{\ell+s}{2}}.\label{R-E44}
\end{eqnarray}
Hence, it follows from (\ref{R-E37}) and (\ref{R-E44}) that
\begin{eqnarray}
&&\|\Lambda^{\ell}\tilde{w}\|_{B_{2,1}^{\sigma-\ell}}\nonumber\\&\lesssim& \|\tilde{w}_{0}\|_{\dot{B}^{-s}_{2,\infty}}(1+t)^{-\frac{\ell+s}{2}}+ \|\tilde{w}_{0}\|_{\dot{B}_{2,1}^{\sigma}}e^{-c_{2}t}\nonumber\\&\lesssim& \|\tilde{w}_{0}\|_{\dot{B}_{2,1}^{\sigma}\cap \dot{B}_{2,\infty}^{-s}}(1+t)^{-\frac{\ell+s}{2}}. \label{R-E45}
\end{eqnarray}

Based on (\ref{R-E45}), the optimal decay estimate (\ref{R-E25}) follows from the embedding $L^p(\mathbb{R}^{n})\hookrightarrow\dot{B}^{-s}_{2,\infty}(\mathbb{R}^{n})(s=n(1/p-1/2))$ in Lemma \ref{lem8.5}. Therefore, the proof of Proposition \ref{prop4.1} is complete.
\end{proof}


Additionally, if the inhomogeneous operator $\Delta_{q}(q\geq-1)$ is replaced with the homogeneous operator $\dot{\Delta}_{q}(q\in \mathbb{Z})$ in the proof of Proposition \ref{prop4.1}, we
can obtain the decay estimates in homogeneous Besov spaces by using the low-frequency and high-frequency decomposition techniques.
\begin{prop}\label{prop4.2}
If $\tilde{w}_{0}\in \dot{B}^{\sigma}_{2,1}(\mathbb{R}^{n})\cap \dot{B}^{-s}_{2,\infty}(\mathbb{R}^{n})$ for $\sigma\in \mathbb{R}, s\in \mathbb{R}$ satisfying $\sigma+s>0$, then the solution $\tilde{w}(t,x)$ of (\ref{R-E23}) has the decay estimate
\begin{equation}
\|\tilde{w}\|_{\dot{B}_{2,1}^{\sigma}}\lesssim \|\tilde{w}_{0}\|_{\dot{B}_{2,1}^{\sigma}\cap \dot{B}_{2,\infty}^{-s}}(1+t)^{-\frac{\sigma+s}{2}}. \label{R-E244}
\end{equation}
In particular, if $\tilde{w}_{0}\in \dot{B}^{\sigma}_{2,1}(\mathbb{R}^{n})\cap L^p(\mathbb{R}^{n})(1\leq p<2$), one further has
\begin{equation}
\|\tilde{w}\|_{\dot{B}_{2,1}^{\sigma}}\lesssim \|\tilde{w}_{0}\|_{\dot{B}_{2,1}^{\sigma}\cap L^{p}}(1+t)^{-\frac{n}{2}(\frac{1}{p}-\frac{1}{2})-\frac{\sigma}{2}}. \label{R-E255}
\end{equation}
\end{prop}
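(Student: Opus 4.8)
The plan is to repeat the energy argument of Proposition~\ref{prop4.1} essentially verbatim, but with the inhomogeneous blocks $\Delta_q$ ($q\geq-1$) replaced by the homogeneous blocks $\dot\Delta_q$ ($q\in\mathbb{Z}$). Applying $\dot\Delta_q$ to (\ref{R-E23}) and taking the Fourier transform produces exactly the system (\ref{R-E27}) for $\widehat{\dot\Delta_q\tilde w}$, so the construction of the Lyapunov functional $E[\widehat{\dot\Delta_q\tilde w}]$ in (\ref{R-E33}) and the pointwise-in-$\xi$ differential inequality (\ref{R-E34}) carry over unchanged, together with the equivalence $E[\widehat{\dot\Delta_q\tilde w}]\approx|\widehat{\dot\Delta_q\tilde w}|^2$. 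Dropping the nonnegative projection term in (\ref{R-E34}) and using this equivalence gives, for each fixed $\xi$, an ODE inequality whose integration yields
\begin{equation*}
E[\widehat{\dot\Delta_q\tilde w}](t,\xi)\leq \exp\!\Big(-c\,\tfrac{|\xi|^2}{1+|\xi|^2}\,t\Big)E[\widehat{\dot\Delta_q\tilde w}](0,\xi).
\end{equation*}
The key structural difference from Proposition~\ref{prop4.1} is that the low frequencies now occupy infinitely many blocks $q<0$ rather than the single block $q=-1$; I would therefore exploit the explicit per-block exponential rate above and sum it against the $\dot B^{-s}_{2,\infty}$ bound, which sidesteps the interpolation inequality used in the inhomogeneous case.

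For the high-frequency range $q\geq0$ one has $|\xi|\sim2^q\geq1$, so $|\xi|^2/(1+|\xi|^2)\geq1/2$ and, after integrating in $\xi$ and invoking Plancherel, $\|\dot\Delta_q\tilde w\|_{L^2}\lesssim e^{-c_2t}\|\dot\Delta_q\tilde w_0\|_{L^2}$. Multiplying by $2^{q\sigma}$ and summing over $q\geq0$ gives
\begin{equation*}
\sum_{q\geq0}2^{q\sigma}\|\dot\Delta_q\tilde w\|_{L^2}\lesssim e^{-c_2t}\|\tilde w_0\|_{\dot B^\sigma_{2,1}}\lesssim(1+t)^{-\frac{\sigma+s}{2}}\|\tilde w_0\|_{\dot B^\sigma_{2,1}},
\end{equation*}
the last step because an exponential decays faster than any fixed polynomial rate.

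For the low-frequency range $q<0$ one has $|\xi|\sim2^q<1$, so $|\xi|^2/(1+|\xi|^2)\sim|\xi|^2\gtrsim2^{2q}$, and the pointwise bound above localizes to $\|\dot\Delta_q\tilde w\|_{L^2}\lesssim e^{-c2^{2q}t}\|\dot\Delta_q\tilde w_0\|_{L^2}$. Bounding the data blockwise by $\|\dot\Delta_q\tilde w_0\|_{L^2}\leq2^{qs}\|\tilde w_0\|_{\dot B^{-s}_{2,\infty}}$ and summing $2^{q\sigma}$ over $q<0$ yields
\begin{equation*}
\sum_{q<0}2^{q\sigma}\|\dot\Delta_q\tilde w\|_{L^2}\lesssim\|\tilde w_0\|_{\dot B^{-s}_{2,\infty}}\sum_{q<0}2^{q(\sigma+s)}e^{-c2^{2q}t}.
\end{equation*}
The main obstacle is the elementary summation estimate $\sum_{q<0}2^{q(\sigma+s)}e^{-c2^{2q}t}\lesssim(1+t)^{-\frac{\sigma+s}{2}}$: this is exactly where the hypothesis $\sigma+s>0$ is used, both to guarantee convergence of the series as $q\to-\infty$ (where $e^{-c2^{2q}t}\to1$ and the terms behave like $2^{q(\sigma+s)}$) and to produce the correct rate, via comparison with $\int_0^\infty r^{\sigma+s-1}e^{-cr^2t}\,dr\approx t^{-\frac{\sigma+s}{2}}$ for $t\geq1$, the bound for $t\leq1$ being the convergent constant $\sum_{q<0}2^{q(\sigma+s)}$. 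Adding the high- and low-frequency ranges then gives (\ref{R-E244}).

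Finally, the refinement (\ref{R-E255}) follows immediately from (\ref{R-E244}) through the embedding $L^p\hookrightarrow\dot B^{-s}_{2,\infty}$ with $s=n(1/p-1/2)$ provided by Lemma~\ref{lem8.5}, since then $\frac{\sigma+s}{2}=\frac{\sigma}{2}+\frac n2\big(\frac1p-\frac12\big)$, matching the stated rate.
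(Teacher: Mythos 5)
Your proof is correct and follows essentially the same route as the paper's: the high-frequency blocks are handled exactly as in Proposition~\ref{prop4.1}, while for the low frequencies the paper likewise derives the per-block bound $\|\dot\Delta_q\tilde w\|_{L^2}\lesssim e^{-c2^{2q}t}\|\dot\Delta_q\tilde w_0\|_{L^2}$, pays $2^{qs}\|\tilde w_0\|_{\dot B^{-s}_{2,\infty}}$ on the data, and sums $2^{q(\sigma+s)}e^{-c2^{2q}t}$ over $q<0$ (bounding the sum by extracting $(1+t)^{-\frac{\sigma+s}{2}}$ and observing the remaining weights $(2^q\sqrt t)^{\sigma+s}e^{-\frac{c}{2}(2^q\sqrt t)^2}$ are uniformly summable, which is your integral comparison in disguise). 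Your explicit remark on where $\sigma+s>0$ enters, and the treatment of $t\le 1$, make the summation step more transparent than the paper's, but the argument is the same.
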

\begin{proof}
Due to the fact that the operator $\dot{\Delta}_{q}$ consists with $\Delta_{q}$ for $q\geq0$, the same process leading to (\ref{R-E37}) gives the high-frequency inequality:
\begin{eqnarray}
\sum_{q\geq0}2^{q\sigma}\|\tilde{w}_{q}\|_{L^2}\lesssim e^{-c_{2}t}\sum_{q\geq0}2^{q\sigma}\|\tilde{w}_{q0}\|_{L^2}\lesssim e^{-c_{2}t} \|\tilde{w}_{0}\|_{\dot{B}_{2,1}^{\sigma}}, \label{R-E377}
\end{eqnarray}
where $\tilde{w}_{q}:=\dot{\Delta}_{q}\tilde{w}.$ However, the low-frequency estimate will be shown by a different idea inspired by \cite{SS}, since Lemma \ref{lem8.2} is not true for all $\sigma, s\in \mathbb{R}$. More precisely, we first have an analogue of the inequality (\ref{R-E38}):
\begin{eqnarray}
\frac{d}{dt}E[\widehat{\tilde{w}_{q}}]+c_{3}|\xi|^2|\widehat{\tilde{w}_{q}}|^2\leq0, \label{R-E388}
\end{eqnarray}
where $|\xi|\sim 2^{q}(q\leq0)$. Then it follows from (\ref{R-E388}) that
\begin{eqnarray}
E[\widehat{\tilde{w}_{q}}]\leq e^{-c_{3}2^{2q}t}E[\widehat{\tilde{w}_{q0}}]. \label{R-E389}
\end{eqnarray}
By integrating (\ref{R-E389}) over $\mathbb{R}^{n}_{\xi}$, with the aid of Plancherel's theorem, we arrive at
\begin{eqnarray}
\|\tilde{w}_{q}\|_{L^2}\lesssim e^{-\frac{1}{2}c_{3}(2^{q}\sqrt{t})^2}\|\tilde{w}_{q0}\|_{L^2}, \label{R-E446}
\end{eqnarray}
that is,
\begin{eqnarray}
2^{q\sigma}\|\tilde{w}_{q}\|_{L^2}\lesssim \|\tilde{w}_{0}\|_{\dot{B}^{-s}_{2,\infty}}(1+t)^{-\frac{\sigma+s}{2}}
\Big[(2^{q}\sqrt{t})^{\sigma+s}e^{-\frac{1}{2}c_{3}(2^{q}\sqrt{t})^2}\Big],\label{R-E447}
\end{eqnarray}
which immediately implies that
\begin{eqnarray}
\sum_{q<0}2^{q\sigma}\|\tilde{w}_{q}\|_{L^2}\lesssim\|\tilde{w}_{0}\|_{\dot{B}^{-s}_{2,\infty}}(1+t)^{-\frac{\sigma+s}{2}}.\label{R-E448}
\end{eqnarray}
Therefore, we conclude that
\begin{eqnarray}
\|\tilde{w}\|_{\dot{B}_{2,1}^{\sigma}}&=&\sum_{q<0}2^{q\sigma}\|w_{q}\|_{L^2}+\sum_{q\geq0}2^{q\sigma}\|w_{q}\|_{L^2}
\nonumber\\&\lesssim& \|\tilde{w}_{0}\|_{\dot{B}^{-s}_{2,\infty}}(1+t)^{-\frac{\sigma+s}{2}}+e^{-c_{2}t} \|\tilde{w}_{0}\|_{\dot{B}_{2,1}^{\sigma}}
\nonumber\\&\lesssim& \|\tilde{w}_{0}\|_{\dot{B}_{2,1}^{\sigma}\cap \dot{B}_{2,\infty}^{-s}}(1+t)^{-\frac{\sigma+s}{2}}
\end{eqnarray}
for $\sigma+s>0$, which deduces (\ref{R-E244}), and (\ref{R-E255}) is followed by the $L^p$
embedding. Hence, the proof of Proposition \ref{prop4.2} is finished.
\end{proof}

\section{Decay estimates for symmetric hyperbolic systems}\setcounter{equation}{0}\label{sec:5}
The aim of this section is to deduce decay estimates for the nonlinear
symmetric hyperbolic system with partial source terms. Our main idea is similar
to \cite{Ka}, where the decay rate of solutions could be obtained by using the Duhamel principle and the time-weighted energy approach.
However, due to the Littlewood-Paley pointwise energy estimates, first of all, we develop the frequency-localization
Duhamel principle, then we introduce the time-weighted energy functionals containing the decay rates, to
obtain the decay estimates of solutions for the corresponding nonlinear problem. A new ingredient is that the low-frequency and high-frequency
decomposition methods and the improved Gagliardo-Nirenberg-Sobolev inequality are mainly used.

To do this, (\ref{R-E7}) can be written as the following form
at $\bar{V}$:
\begin{eqnarray}
\tilde{A}^{0}\tilde{w}_{t}+\sum^{n}_{j=1}\tilde{A}^{j}\tilde{w}_{x_{j}}+L\tilde{w}=\mathcal{R},\label{R-E46}
\end{eqnarray}
with $\tilde{w}=V-\bar{V}$, where
$\tilde{A}^{0}=\tilde{A}^{0}(\bar{V}),
\tilde{A}^{j}=\tilde{A}^{j}(\bar{V})$ and $L=L(\bar{V})$ are the constant
matrices, and $\mathcal{R}=\mathcal{R}_{1}+\mathcal{R}_{2}$ with
$$\mathcal{R}_{1}:=-\sum^{n}_{j=1}\tilde{A}^{0}\Big(\tilde{A}^{0}(V)^{-1}\tilde{A}^{j}(V)-(\tilde{A}^{0})^{-1}\tilde{A}^{j}\Big)V_{x_{j}},$$
$$\mathcal{R}_{2}:=\tilde{A}^{0}\Big\{-\Big(\tilde{A}^{0}(V)^{-1}-(\tilde{A}^{0})^{-1}\Big)LV+\tilde{A}^{0}(V)^{-1}\tilde{r}(V)\Big\}.$$
The initial condition is supplemented by
\begin{equation}
\tilde{w}_{0}=V_{0}-\bar{V}. \label{R-E47}
\end{equation}

Next, we denote by $\mathcal{G}(t,x)$ the Green matrix associated with the linear Cauchy problem (\ref{R-E23}):
\begin{eqnarray}
\widehat{\mathcal{G}f}(t,\xi)=e^{t\Phi(i\xi)}\hat{f}(\xi), \label{R-E48}
\end{eqnarray}
where $$\Phi(i\xi)=-(A^{0})^{-1}[A(i\xi)+L]$$ with
$A(i\xi)=i\sum_{j=1}^{n}\tilde{A}^{j}\xi_{j}$. Then $\mathcal{G}(t,x)\tilde{w}_{0}$ is the solution of (\ref{R-E23}).
 Furthermore, in terms of the Green matrix $\mathcal{G}(t,x)$, the solution of (\ref{R-E46})-(\ref{R-E47}) can be expressed as by the standard Duhamel principle
\begin{eqnarray}
\tilde{w}(t,x)=\mathcal{G}(t,x)\tilde{w}_{0}+\int^{t}_{0}\mathcal{G}(t-\tau, x)\mathcal{R}(\tau)d\tau. \label{R-E49}
\end{eqnarray}

In addition, we also have the frequency-localization Duhamel principle.
\begin{lem}\label{lem5.1}
Suppose that $\tilde{w}=V(t,x)-\bar{V}$ is a solution of (\ref{R-E46})-(\ref{R-E47}). Then it holds that
\begin{eqnarray}
\Delta_{q}\Lambda^{\ell}\tilde{w}(t,x)=\Delta_{q}\Lambda^{\ell}[\mathcal{G}(t,x)\tilde{w}_{0}]
+\int^{t}_{0}\Delta_{q}\Lambda^{\ell}[\mathcal{G}(t-\tau,x)\mathcal{R}(\tau)]d\tau\label{R-E50}
\end{eqnarray}
for $q\geq-1$ and $\ell\in \mathbb{R}$, and
\begin{eqnarray}
\dot{\Delta}_{q}\Lambda^{\ell}\tilde{w}(t,x)=\dot{\Delta}_{q}\Lambda^{\ell}[\mathcal{G}(t,x)\tilde{w}_{0}]
+\int^{t}_{0}\dot{\Delta}_{q}\Lambda^{\ell}[\mathcal{G}(t-\tau,x)\mathcal{R}(\tau)]d\tau\label{R-E500}
\end{eqnarray}
for $q\in\mathbb{Z}$ and $\ell\in \mathbb{R}$.
\end{lem}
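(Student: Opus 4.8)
The plan is to read both identities as the result of applying the single fixed spatial operator $\Delta_{q}\Lambda^{\ell}$ (resp.\ $\dot{\Delta}_{q}\Lambda^{\ell}$) to the standard Duhamel representation (\ref{R-E49}), so that the whole content reduces to the fact that this operator commutes with the solution operator $\mathcal{G}(t)$ and may be pulled under the time integral. First I would record that each of the three building blocks acts as a Fourier multiplier in $x$: the localization operators are convolution operators, hence $\mathcal{F}(\dot{\Delta}_{q}f)=\mathcal{F}\Phi_{q}\,\mathcal{F}f$ for every $q\in\mathbb{Z}$, while $\mathcal{F}(\Delta_{q}f)=\mathcal{F}\Phi_{q}\,\mathcal{F}f$ for $q\geq0$ with $\mathcal{F}\Psi$ replacing $\mathcal{F}\Phi_{q}$ when $q=-1$; moreover $\mathcal{F}(\Lambda^{\ell}f)=|\xi|^{\ell}\mathcal{F}f$, and by (\ref{R-E48}) $\mathcal{F}(\mathcal{G}(t)f)=e^{t\Phi(i\xi)}\mathcal{F}f$. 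Since multiplication by symbols in the frequency variable is commutative, the operators $\Delta_{q}$, $\Lambda^{\ell}$ and $\mathcal{G}(t)$ commute pairwise, and likewise with $\dot{\Delta}_{q}$ in place of $\Delta_{q}$.

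Next I would apply $\Delta_{q}\Lambda^{\ell}$ to both sides of (\ref{R-E49}). The contribution of the initial data is immediate and produces the term $\Delta_{q}\Lambda^{\ell}[\mathcal{G}(t,x)\tilde{w}_{0}]$. For the Duhamel term it remains to justify the exchange
$$\Delta_{q}\Lambda^{\ell}\int^{t}_{0}\mathcal{G}(t-\tau,x)\mathcal{R}(\tau)\,d\tau=\int^{t}_{0}\Delta_{q}\Lambda^{\ell}\big[\mathcal{G}(t-\tau,x)\mathcal{R}(\tau)\big]\,d\tau,$$
and this is the one step I expect to carry the (modest) technical weight. The key observation legitimizing it is that frequency localization tames the otherwise unbounded factor $|\xi|^{\ell}$: on the support of $\mathcal{F}\Phi_{q}$ one has $|\xi|\sim 2^{q}$, so by Lemma \ref{lem3.1} the composite $\Delta_{q}\Lambda^{\ell}$ is a \emph{bounded} linear operator on $L^{2}(\mathbb{R}^{n})$ for every fixed $q$ and every $\ell\in\mathbb{R}$. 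A bounded linear operator commutes with a Bochner integral, and the integrand $\tau\mapsto\mathcal{G}(t-\tau)\mathcal{R}(\tau)$ is continuous with values in $L^{2}$ on the compact interval $[0,t]$: the Green matrix is an $L^{2}$-bounded family uniformly in time (the energy $(\tilde{A}^{0}\widehat{\tilde w},\widehat{\tilde w})$ is nonincreasing by (\ref{R-E41}) and $\tilde{A}^{0}$ is positive definite), while $\mathcal{R}=\mathcal{R}_{1}+\mathcal{R}_{2}$ is controlled by the regularity of the solution furnished by Theorem \ref{thm2.3}. Hence the interchange is valid and (\ref{R-E50}) follows.

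Finally, repeating the argument verbatim with $\dot{\Delta}_{q}$ in place of $\Delta_{q}$ yields (\ref{R-E500}). The only adjustment is that the homogeneous blocks are understood modulo polynomials, i.e.\ on $S'_{0}$; since $\tilde{w}$ and the data $\tilde{w}_{0}$ lie in the homogeneous Besov spaces used throughout Section~\ref{sec:4}, every $\dot{\Delta}_{q}$ $(q\in\mathbb{Z})$ is well defined and $\dot{\Delta}_{q}\Lambda^{\ell}$ is again $L^{2}$-bounded for fixed $q$, so the same Bochner-integral commutation applies. In short, the difficulty is not conceptual but bookkeeping: confirming the $L^{2}$-boundedness of the localized operator and the $L^{2}$-continuity in time of the Duhamel integrand, after which the two identities are immediate consequences of the commutativity of Fourier multipliers.
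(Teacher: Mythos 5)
Your proposal is correct, but it arranges the argument differently from the paper, and the difference is worth noting. You apply $\Delta_{q}\Lambda^{\ell}$ (resp.\ $\dot{\Delta}_{q}\Lambda^{\ell}$) to the already-written Duhamel formula (\ref{R-E49}), which forces you to justify pulling the operator under the time integral; you do this correctly via the $L^{2}$-boundedness of the fixed-$q$ localized operator and the $L^{2}$-continuity in $\tau$ of $\mathcal{G}(t-\tau)\mathcal{R}(\tau)$, invoking commutation of a bounded operator with a Bochner integral. The paper proceeds in the opposite order: it first applies $\Delta_{q}\Lambda^{\ell}$ to the system (\ref{R-E46})--(\ref{R-E47}) itself, observes that the localized unknown solves the same constant-coefficient system with source $\Delta_{q}\Lambda^{\ell}\mathcal{R}$ and data $\Delta_{q}\Lambda^{\ell}\tilde{w}_{0}$ (the frequency-localized system (\ref{R-E51})), and then invokes the Duhamel principle (\ref{R-E49}) for that system; the operator then sits inside the time integral automatically, and only the trivial commutation of the Fourier multipliers $\mathcal{G}(t)$ and $\Delta_{q}\Lambda^{\ell}$ remains. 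So the paper's route sidesteps entirely the interchange step that you identify as carrying the technical weight, at the cost of taking the Duhamel principle for the localized system as the starting point; your route buys a self-contained verification from (\ref{R-E49}) alone, which is a legitimate and arguably more careful presentation. One caveat on your justification: the claim that $\Delta_{q}\Lambda^{\ell}$ is $L^{2}$-bounded for \emph{every} $\ell\in\mathbb{R}$ is not right for the block $q=-1$, whose Fourier support contains the origin, so the multiplier $|\xi|^{\ell}\mathcal{F}\Psi(\xi)$ is unbounded when $\ell<0$ (Lemma \ref{lem3.1}(i) only covers $\alpha\geq0$); this edge case is really an issue with the statement's range ``$q\geq-1$, $\ell\in\mathbb{R}$'' rather than with your method, and in all subsequent applications the paper uses only $\ell\geq0$, where your argument is complete.
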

\begin{proof} We only show (\ref{R-E50}) and (\ref{R-E500}) follows similarly.
Applying the operator $\Delta_{q}\Lambda^{\ell}$ to (\ref{R-E46})-(\ref{R-E47}) gives
\begin{equation}
\left\{
\begin{array}{l}
\tilde{A}^{0}\Delta_{q}\Lambda^{\ell}\tilde{w}_{t}+\sum^{n}_{j=1}\tilde{A}^{j}\Delta_{q}\Lambda^{\ell}\tilde{w}_{x_{j}}+L\Delta_{q}\Lambda^{\ell}\tilde{w}
=\Delta_{q}\Lambda^{\ell}\mathcal{R},\\
\Delta_{q}\Lambda^{\ell}\tilde{w}|_{t=0}=\Delta_{q}\Lambda^{\ell}\tilde{w}_{0}.
\end{array} \right. \label{R-E51}
\end{equation}
Note that the definition of $\mathcal{G}$ in (\ref{R-E48}), the equality (\ref{R-E50}) follows from the
frequency-localization system (\ref{R-E51}) and (\ref{R-E49}) immediately.
\end{proof}

In what follows, our task is to prove the decay estimates for nonlinear problem (\ref{R-E46})-(\ref{R-E47}). For this purpose, we introduce some time-weighted sup-norms:
$$
\mathcal{E}_{0}(t):=\sup_{0\leq\tau\leq t}\|\tilde{w}(\tau)\|_{B^{\sigma_{c}}_{2,1}};
$$
\begin{eqnarray*}
\mathcal{E}_{1}(t)&:=&\sup_{0\leq\ell<(\sigma_{c}-1)}\sup_{0\leq\tau\leq t}(1+\tau)^{\frac{s+\ell}{2}}\|\Lambda^{\ell}\tilde{w}(\tau)\|_{B^{\sigma_{c}-1-\ell}_{2,1}}\nonumber\\&&+\sup_{0\leq\tau\leq t}(1+\tau)^{\frac{s+\sigma_{c}-1}{2}}\|\Lambda^{\sigma_{c}-1}\tilde{w}(\tau)\|_{\dot{B}^{0}_{2,1}};
\end{eqnarray*}
\begin{eqnarray*}
\mathcal{E}_{2}(t)&:=&\sup_{0\leq\ell<(\sigma_{c}-2)}\sup_{0\leq\tau\leq t}(1+\tau)^{\frac{s+\ell+1}{2}}\|\Lambda^{\ell}(I-\mathcal{P})\tilde{w}(\tau)\|_{B^{\sigma_{c}-2-\ell}_{2,1}}
\nonumber\\&&+\sup_{0\leq\tau\leq t}(1+\tau)^{\frac{s+\sigma_{c}-1}{2}}\|\Lambda^{\sigma_{c}-2}(I-\mathcal{P})\tilde{w}(\tau)\|_{\dot{B}^{0}_{2,1}}
\end{eqnarray*}
and further set
$$\mathcal{E}(t):=\mathcal{E}_{1}(t)+\mathcal{E}_{2}(t).$$

Here let us note again that the time-weighted sup-norms are different in regard to the derivative index, since we take care of the topological relation between homogeneous Besov spaces and inhomogeneous Besov spaces, which can be regarded as the great improvement of those in \cite{KY,Ma}. Precisely, we shall prove the following result.
\begin{prop}\label{prop5.1}
Let $\tilde{w}=V(t,x)-\bar{V}$ be the global classical solution in the sense of Theorem \ref{thm2.3}. Suppose that $\tilde{w}_{0}-\bar{w}\in B^{\sigma_{c}}_{2,1}\cap \dot{B}^{-s}_{2,\infty}(0<s\leq n/2)$ and the norm
$E_{0}:=\|w_{0}-\bar{w}\|_{B^{\sigma_{c}}_{2,1}\cap \dot{B}^{-s}_{2,\infty}}$ is sufficiently small. Then it holds that
\begin{eqnarray}
\|\Lambda^{\ell}\tilde{w}(t)\|_{X_{1}}\lesssim E_{0}(1+t)^{-\frac{s+\ell}{2}} \label{R-E52}
\end{eqnarray}
for $0\leq\ell\leq \sigma_{c}-1$, where $X_{1}:=B^{\sigma_{c}-1-\ell}_{2,1}$ if $0\leq\ell<\sigma_{c}-1$ and $X_{1}:=\dot{B}^{0}_{2,1}$ if $\ell=\sigma_{c}-1$;
\begin{eqnarray}
\|\Lambda^{\ell}(I-\mathcal{P})\tilde{w}(t)\|_{X_{2}}\lesssim E_{0}(1+t)^{-\frac{s+\ell+1}{2}} \label{R-E53}
\end{eqnarray}
for $0\leq\ell\leq \sigma_{c}-2$, where $X_{2}:=B^{\sigma_{c}-2-\ell}_{2,1}$ if $0\leq\ell<\sigma_{c}-2$ and $X_{2}:=\dot{B}^{0}_{2,1}$ if $\ell=\sigma_{c}-2$.
\end{prop}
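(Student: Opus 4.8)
The plan is to run a time-weighted energy bootstrap argument built on the frequency-localization Duhamel formula of Lemma \ref{lem5.1} together with the linearized decay estimates of Propositions \ref{prop4.1} and \ref{prop4.2}. Concretely, I would aim to establish the single Lyapunov-type inequality
\begin{equation*}
\mathcal{E}(t)\lesssim E_{0}+\mathcal{E}(t)^{2},
\end{equation*}
from which, since $\mathcal{E}$ is continuous and $E_{0}$ is small, a standard continuity argument yields $\mathcal{E}(t)\lesssim E_{0}$ uniformly in $t$; unpacking the definitions of $\mathcal{E}_{1}$ and $\mathcal{E}_{2}$ then gives precisely \eqref{R-E52} and \eqref{R-E53}.

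First I would apply $\Delta_{q}\Lambda^{\ell}$ (resp.\ $\dot{\Delta}_{q}\Lambda^{\ell}$ at the critical endpoints $\ell=\sigma_{c}-1$ and $\ell=\sigma_{c}-2$) to \eqref{R-E49} via Lemma \ref{lem5.1}, splitting each norm into the free evolution $\Lambda^{\ell}[\mathcal{G}(t)\tilde{w}_{0}]$ and the Duhamel term. The free part is immediate: Proposition \ref{prop4.1} (and Proposition \ref{prop4.2} at the endpoint) controls it by $E_{0}(1+t)^{-\frac{s+\ell}{2}}$, which is exactly the target rate. For the Duhamel integral I would split the time integration as $\int_{0}^{t/2}+\int_{t/2}^{t}$. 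On $[0,t/2]$ one has $t-\tau\approx t$, so the propagator carries the full decay and I would estimate $\mathcal{G}(t-\tau)\mathcal{R}(\tau)$ in the low-order norm through the negative-index branch of Proposition \ref{prop4.1}, which only needs $\|\mathcal{R}(\tau)\|_{\dot{B}^{-s}_{2,\infty}}$; on $[t/2,t]$ the source itself already decays (since $\tau\approx t$), so I would use the high-regularity, short-time bound on $\mathcal{G}$ and absorb the Green's-function integral.

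The heart of the matter is the nonlinear estimate of $\mathcal{R}=\mathcal{R}_{1}+\mathcal{R}_{2}$. Both pieces are genuinely quadratic: $\mathcal{R}_{1}$ has the schematic form $\tilde{w}\,\nabla\tilde{w}$ (the coefficient matrix vanishes at $\bar{V}$), while $\mathcal{R}_{2}$, after using $L\bar{V}=0$ and the bound \eqref{R-E9} on $\tilde{r}(V)$, has the form $\tilde{w}\,(I-\mathcal{P})\tilde{w}$, i.e.\ it always pairs the solution with the faster-decaying orthogonal component. Using the Moser-type product law (Proposition \ref{prop3.1}) and the composition estimate (Proposition \ref{prop3.2}), I would bound the high-regularity Besov norms of $\mathcal{R}$ by products of the time-weighted quantities inside $\mathcal{E}(t)$ and read off the decay rate from the definitions of $\mathcal{E}_{1},\mathcal{E}_{2}$; the extra half-power of decay built into $\mathcal{E}_{2}$ is precisely what renders the $\mathcal{R}_{2}$-contribution time-integrable at the correct rate. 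To close the coupled system I would then prove the orthogonal estimate \eqref{R-E53} separately, exploiting the dissipative structure of the normal form (Theorem \ref{thm2.1}), which gains the half-derivative/half-rate on $(I-\mathcal{P})\tilde{w}$ and feeds back into the $\mathcal{R}_{2}$-bound, allowing $\mathcal{E}_{1}$ and $\mathcal{E}_{2}$ to be closed simultaneously.

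The main obstacle I anticipate is controlling $\|\mathcal{R}(\tau)\|_{\dot{B}^{-s}_{2,\infty}}$, since products behave badly in negative-index Besov spaces and the norm cannot simply be distributed over the two factors. I would instead exploit the bilinear structure directly — via a paraproduct splitting of the product, or by passing through an $L^{r}$ bound and the embedding $L^{r}\hookrightarrow\dot{B}^{-s}_{2,\infty}$ of Lemma \ref{lem8.5} — so that the two decaying factors supplied by $\mathcal{E}(t)$ combine into a quantity with the correct, integrable time exponent. A secondary difficulty is the bookkeeping of the fractional index $\ell$ across the whole interval $[0,\sigma_{c}-1]$ and the matching of inhomogeneous with homogeneous norms at the endpoints, where the improved Gagliardo--Nirenberg--Sobolev inequality (Lemma \ref{lem8.4}) and the embeddings of Lemma \ref{lem3.2} must be applied with care.
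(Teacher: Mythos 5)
Your proposal is correct and follows essentially the same route as the paper: the same time-weighted functionals $\mathcal{E}_1,\mathcal{E}_2$, the frequency-localized Duhamel principle (Lemma \ref{lem5.1}) with the linearized decay of Propositions \ref{prop4.1}--\ref{prop4.2} for the free part, the reduction $\|\mathcal{R}\|_{\dot{B}^{-s}_{2,\infty}}\lesssim\|\mathcal{R}\|_{L^p}$ via Lemma \ref{lem8.5} combined with H\"older and Lemma \ref{lem8.4} at low frequency, Moser-type product estimates at high frequency, the separate damped-ODE treatment of $(I-\mathcal{P})\tilde{w}$ through the normal form of Theorem \ref{thm2.1}, and the closing quadratic continuity argument. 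The only cosmetic differences are that your $t/2$-splitting of the Duhamel integral is precisely the standard proof of the convolution bound the paper invokes in \eqref{R-E67}, and that the paper first records the inequality \eqref{R-E54} with the extra term $\mathcal{E}_0(t)\mathcal{E}(t)$ and then absorbs it using $\mathcal{E}_0(t)\lesssim E_0$ from Theorem \ref{thm2.3}, an absorption you perform implicitly.
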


Proposition \ref{prop5.1} mainly depends on an energy inequality related to those time-weighted quantities, which is included in the following proposition.
\begin{prop}\label{prop5.2}
Let $\tilde{w}=V(t,x)-\bar{V}$ be the global classical solution in the sense of Theorem \ref{thm2.3}. Additional, if $\tilde{w}_{0}-\bar{w}\in \dot{B}^{-s}_{2,\infty}(0<s\leq n/2)$, then
\begin{eqnarray}
\mathcal{E}(t)\lesssim E_{0}+\mathcal{E}^{2}(t)+\mathcal{E}_{0}(t)\mathcal{E}(t), \label{R-E54}
\end{eqnarray}
where $E_{0}$ is defined as Proposition 5.1.
\end{prop}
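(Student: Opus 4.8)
The plan is to close a bootstrap for the time-weighted functional $\mathcal{E}(t)=\mathcal{E}_1(t)+\mathcal{E}_2(t)$ using the frequency-localized Duhamel formula of Lemma \ref{lem5.1} together with the linear decay estimates of Propositions \ref{prop4.1}--\ref{prop4.2}. First I would apply $\Lambda^{\ell}$ to (\ref{R-E49}) and write $\Lambda^{\ell}\tilde{w}(t)=\Lambda^{\ell}\mathcal{G}(t)\tilde{w}_{0}+\int_{0}^{t}\Lambda^{\ell}\mathcal{G}(t-\tau)\mathcal{R}(\tau)\,d\tau$. The linear term is disposed of at once: since $\tilde{w}_{0}\in B^{\sigma_{c}}_{2,1}\cap\dot{B}^{-s}_{2,\infty}$, Propositions \ref{prop4.1}--\ref{prop4.2} give $\|\Lambda^{\ell}\mathcal{G}(t)\tilde{w}_{0}\|_{X_{1}}\lesssim E_{0}(1+t)^{-(s+\ell)/2}$, and likewise with the faster rate for the orthogonal part, so after multiplication by the time weights this produces exactly the $E_{0}$ on the right of (\ref{R-E54}). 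All the work is therefore in the Duhamel integral.

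The second key step is a family of bilinear estimates for the source $\mathcal{R}=\mathcal{R}_{1}+\mathcal{R}_{2}$, which is quadratic with $\mathcal{R}_{1}\sim\tilde{w}\,\nabla\tilde{w}$ and $\mathcal{R}_{2}$ controlled through (\ref{R-E9}) by $|\tilde{w}|\,|(I-\mathcal{P})\tilde{w}|$. I would bound $\mathcal{R}(\tau)$ in two families of norms: the high-regularity norms $\dot{B}^{\sigma}_{2,1}$ needed for the high-frequency block, via the Moser product and composition estimates of Propositions \ref{prop3.1}--\ref{prop3.2}; and the negative-order norm $\dot{B}^{-s}_{2,\infty}$ needed to feed the low-frequency propagator, via a paraproduct bound schematically of the form $\|fg\|_{\dot{B}^{-s}_{2,\infty}}\lesssim\|f\|_{\dot{B}^{n/2}_{2,1}}\|g\|_{\dot{B}^{-s}_{2,\infty}}$ (legitimate for $0<s\le n/2$). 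In each case the quadratic structure splits one factor into a non-decaying top-order norm controlled by $\mathcal{E}_{0}$ and one factor carrying decay controlled by $\mathcal{E}_{1}$, or by $\mathcal{E}_{2}$ when the $(I-\mathcal{P})$ factor in $\mathcal{R}_{2}$ is exploited; this is precisely the mechanism that generates the $\mathcal{E}^{2}(t)$ and $\mathcal{E}_{0}(t)\mathcal{E}(t)$ terms.

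The third step assembles the Duhamel integral by a joint frequency and time splitting, $\int_{0}^{t}=\int_{0}^{t/2}+\int_{t/2}^{t}$. High modes $q\ge0$ are treated with the exponential bound $\|\Delta_{q}\mathcal{G}(t-\tau)\mathcal{R}\|_{L^{2}}\lesssim e^{-c(t-\tau)}\|\Delta_{q}\mathcal{R}\|_{L^{2}}$ from Case 1 of Proposition \ref{prop4.1}; low modes $q<0$ with the heat-type bound $\|\dot{\Delta}_{q}\mathcal{G}(t-\tau)\mathcal{R}\|_{L^{2}}\lesssim e^{-c2^{2q}(t-\tau)}\|\dot{\Delta}_{q}\mathcal{R}\|_{L^{2}}$ combined with $\|\dot{\Delta}_{q}\mathcal{R}\|_{L^{2}}\le 2^{qs}\|\mathcal{R}\|_{\dot{B}^{-s}_{2,\infty}}$. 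Summing the low modes reproduces the kernel factor $\sum_{q<0}2^{q(\sigma+s)}e^{-c2^{2q}(t-\tau)}\lesssim(1+t-\tau)^{-(\sigma+s)/2}$ exactly as in the proof of Proposition \ref{prop4.2}, after which the $\tau$-integration against the decaying source bounds is performed and compared with the target rates $(1+t)^{-(s+\ell)/2}$ and $(1+t)^{-(s+\ell+1)/2}$. The extra half power for $(I-\mathcal{P})\tilde{w}$ in $\mathcal{E}_{2}$ is recovered separately from the normal-form structure: applying $(I-\mathcal{P})$ to (\ref{R-E46}) isolates the zeroth-order dissipation $L\tilde{w}=L(I-\mathcal{P})\tilde{w}$, which damps the orthogonal component directly and, coupled with the decay of the full $\tilde{w}$, upgrades its rate by one half.

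The hard part will be the low-frequency estimate for small $s$. Because $\dot{B}^{-s}_{2,\infty}$ carries no low-frequency integrability margin, the crude bound that pulls $(1+t)^{-(s+\ell)/2}$ out of $\int_{0}^{t/2}$ would force $\int_{0}^{\infty}\|\mathcal{R}(\tau)\|_{\dot{B}^{-s}_{2,\infty}}\,d\tau<\infty$, which fails when $s$ is near $0$. The remedy is to keep the full kernel $e^{-c2^{2q}(t-\tau)}$ inside the $\tau$-integral and balance it mode by mode against the decay of $\|\mathcal{R}(\tau)\|_{\dot{B}^{-s}_{2,\infty}}$, so that the loss of $\tau$-integrability is offset by the gain $(2^{q}\sqrt{t-\tau})^{\sigma+s}e^{-c(2^{q}\sqrt{t-\tau})^{2}}$; one must also propagate a uniform bound on $\|\tilde{w}(\tau)\|_{\dot{B}^{-s}_{2,\infty}}$, the nonlinear analogue of (\ref{R-E42}), to make the paraproduct estimate usable. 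Closing the coupled inequalities for $\mathcal{E}_{1}$ and $\mathcal{E}_{2}$ simultaneously — verifying that every nonlinear contribution decays at least as fast as its assigned weight — is the delicate bookkeeping that yields (\ref{R-E54}), and the smallness of $\mathcal{E}_{0}$ and $\mathcal{E}$ is what finally absorbs the quadratic terms in the bootstrap.
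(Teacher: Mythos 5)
Your overall architecture --- frequency-localized Duhamel formula, linear decay from Propositions \ref{prop4.1}--\ref{prop4.2} for the free evolution, exponential damping for the high frequencies, the normal-form ODE structure for the orthogonal component, and the final bookkeeping $\mathcal{E}_{1}\lesssim E_{0}+\mathcal{E}^{2}+\mathcal{E}_{0}\mathcal{E}_{1}$, $\mathcal{E}_{2}\lesssim E_{0}+\mathcal{E}_{1}^{2}+\mathcal{E}_{1}\mathcal{E}_{2}$ --- is exactly that of the paper (Lemmas \ref{lem5.2}, \ref{lem5.3} and \ref{lem5.4}). The genuine gap is in the one step that carries all the difficulty: the estimate of the source $\mathcal{R}$ in the negative-order norm feeding the low-frequency propagator. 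You propose $\|fg\|_{\dot{B}^{-s}_{2,\infty}}\lesssim\|f\|_{\dot{B}^{n/2}_{2,1}}\|g\|_{\dot{B}^{-s}_{2,\infty}}$ and pair one factor controlled by $\mathcal{E}_{0}$ or $\mathcal{E}_{1}$ with one factor measured in $\dot{B}^{-s}_{2,\infty}$. But the functionals $\mathcal{E}_{1},\mathcal{E}_{2}$ only weight norms of nonnegative regularity; the $\dot{B}^{-s}_{2,\infty}$ norm of $\tilde{w}$, $\nabla\tilde{w}$ or $(I-\mathcal{P})\tilde{w}$ is at best uniformly \emph{bounded} --- that is all your ``nonlinear analogue of (\ref{R-E42})'' can give --- and carries no decay. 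Consequently the best your product bound can yield is, e.g., $\|\mathcal{R}_{1}(\tau)\|_{\dot{B}^{-s}_{2,\infty}}\lesssim(1+\tau)^{-\frac{s+n/2}{2}}\,\mathcal{E}_{1}(t)\,\mathcal{E}_{0}(t)$, which for $n=3$ and $s\leq1/2$ is not integrable in $\tau$, so the Duhamel integral cannot reproduce the target weight $(1+t)^{-\frac{s+\ell}{2}}$. Your proposed remedy, keeping $e^{-c2^{2q}(t-\tau)}$ inside the $\tau$-integral, does not rescue this: as $q\to-\infty$ the kernel is essentially $1$ on $[0,t]$, and summing $2^{q(\sigma+s)}\min(t,2^{-2q})$ over $q<0$ merely converts the loss into a fixed polynomial deficit rather than the missing decay.

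The mechanism you are missing is precisely what makes the paper's Lemma \ref{lem5.2} work: \emph{both} factors of the quadratic source are made to decay. There one bounds $\|\mathcal{R}(\tau)\|_{\dot{B}^{-s}_{2,\infty}}\lesssim\|\mathcal{R}(\tau)\|_{L^{p}}$ with $1/p=1/2+s/n$ (Lemma \ref{lem8.5}), then H\"older gives $\|\tilde{w}\nabla\tilde{w}\|_{L^{p}}\leq\|\tilde{w}\|_{L^{n/s}}\|\nabla\tilde{w}\|_{L^{2}}$, and the improved Gagliardo--Nirenberg inequality (Lemma \ref{lem8.4}) converts $\|\tilde{w}\|_{L^{n/s}}$ into fractional-derivative $L^{2}$-based norms that are themselves weighted in $\mathcal{E}_{1}$. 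The outcome (\ref{R-E66}) is a source decay $(1+\tau)^{-s-1}$ for $0<s\leq n/2-1$ and $(1+\tau)^{-s-1/2}$ for $n/2-1<s\leq n/2$, both time-integrable since $n\geq3$, after which the plain splitting of the convolution closes the estimate. This also shows that your diagnosis of the difficulty is off: it is not that $\int_{0}^{\infty}\|\mathcal{R}(\tau)\|_{\dot{B}^{-s}_{2,\infty}}d\tau$ must diverge for small $s$ --- under the correct bilinear estimate it converges for every $s>0$ --- but that your particular paraproduct sacrifices the decay of one factor. To repair the proof, replace the negative-order product estimate by the $L^{p}$-embedding/H\"older/Lemma \ref{lem8.4} chain (or any bilinear estimate in which both factors sit in decaying norms); note also that no uniform propagation of $\|\tilde{w}\|_{\dot{B}^{-s}_{2,\infty}}$ along the nonlinear flow is then needed, since (\ref{R-E42}) is used only for the linear semigroup.
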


Here we plan to divide the proof of Proposition 5.2 into several lemmas for clarity, since it is a little long.
The first result is the nonlinear low-frequency estimate for solutions.

\begin{lem}\label{lem5.2}
Under the assumption of Proposition \ref{prop5.2}, we have
\begin{eqnarray}
\|\Delta_{-1}\Lambda^{\ell}\tilde{w}\|_{L^2}\lesssim \|\tilde{w}_{0}\|_{\dot{B}^{-s}_{2,\infty}} (1+t)^{-\frac{s+\ell}{2}}+(1+t)^{-\frac{s+\ell}{2}}\mathcal{E}^{2}(t). \label{R-E55}
\end{eqnarray}
for $0\leq\ell<\sigma_{c}-1$, and
\begin{eqnarray}
\sum_{q<0}\|\dot{\Delta}_{q}\Lambda^{\sigma_{c}-1}\tilde{w}\|_{L^2}\lesssim \|\tilde{w}_{0}\|_{\dot{B}^{-s}_{2,\infty}} (1+t)^{-\frac{s+\sigma_{c}-1}{2}}+(1+t)^{-\frac{s+\sigma_{c}-1}{2}}\mathcal{E}^{2}(t). \label{R-E555}
\end{eqnarray}
\end{lem}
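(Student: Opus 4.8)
The plan is to combine the frequency-localized Duhamel formula of Lemma \ref{lem5.1} with the \emph{low-frequency} parts of the linear decay estimates already obtained in Propositions \ref{prop4.1} and \ref{prop4.2}. For the subcritical index $0\le\ell<\sigma_c-1$ I would apply $\Delta_{-1}\Lambda^\ell$ to (\ref{R-E49}) and split
\[
\Delta_{-1}\Lambda^\ell\tilde w=\Delta_{-1}\Lambda^\ell[\mathcal{G}(t)\tilde w_0]+\int_0^t\Delta_{-1}\Lambda^\ell[\mathcal{G}(t-\tau)\mathcal{R}(\tau)]\,d\tau .
\]
The linear term is controlled at once by the low-frequency bound (\ref{R-E44}) established inside the proof of Proposition \ref{prop4.1}, which sees only the $\dot{B}^{-s}_{2,\infty}$-norm of the data and gives $\|\Delta_{-1}\Lambda^\ell[\mathcal{G}(t)\tilde w_0]\|_{L^2}\lesssim\|\tilde w_0\|_{\dot{B}^{-s}_{2,\infty}}(1+t)^{-\frac{s+\ell}{2}}$. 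For the critical index $\ell=\sigma_c-1$ I would instead apply $\dot{\Delta}_q\Lambda^{\sigma_c-1}$, sum over $q<0$, and invoke the homogeneous low-frequency estimate (\ref{R-E448}) of Proposition \ref{prop4.2} with $\sigma=\sigma_c-1$.

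For the nonlinear term the point is that $\Delta_{-1}\mathcal{G}(t-\tau)$ is itself frequency-localized, so $\mathcal{G}(t-\tau)\mathcal{R}(\tau)$ may be read as the homogeneous evolution issued from the datum $\mathcal{R}(\tau)$; applying the same low-frequency linear decay under the integral yields
\[
\Big\|\int_0^t\Delta_{-1}\Lambda^\ell[\mathcal{G}(t-\tau)\mathcal{R}(\tau)]\,d\tau\Big\|_{L^2}\lesssim\int_0^t(1+t-\tau)^{-\frac{s+\ell}{2}}\,\|\mathcal{R}(\tau)\|_{\dot{B}^{-s}_{2,\infty}}\,d\tau ,
\]
and the matter is reduced to bounding $\|\mathcal{R}(\tau)\|_{\dot{B}^{-s}_{2,\infty}}$ with a time-decay rate strictly faster than $(1+\tau)^{-1}$.

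To this end I would exploit the quadratic structure of $\mathcal{R}=\mathcal{R}_1+\mathcal{R}_2$: each summand carries either a spatial gradient (in $\mathcal{R}_1$, where $\tilde{A}^0(V)^{-1}\tilde{A}^j(V)-(\tilde{A}^0)^{-1}\tilde{A}^j=O(|\tilde w|)$ multiplies $\partial_{x_j}\tilde w$) or the faster-decaying orthogonal factor $(I-\mathcal{P})\tilde w$ (in $\mathcal{R}_2$, through $LV=L(I-\mathcal{P})\tilde w$ near $\bar V$ and through $\tilde r(V)$ with $|\tilde r(V)|\lesssim|\tilde w||(I-\mathcal{P})\tilde w|$ by (\ref{R-E9})). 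Estimating the product in the negative-order space by a Bony decomposition, and distributing the gradient of $\mathcal{R}_1$ onto the negative-index factor so that both factors remain at regularity $\le\sigma_c-1$, one can bound each term by a product of two time-weighted norms already encoded in $\mathcal{E}_1$ and $\mathcal{E}_2$ --- crucially using that the top norm $\|\tilde w\|_{\dot{B}^{\sigma_c-1}_{2,1}}=\|\Lambda^{\sigma_c-1}\tilde w\|_{\dot{B}^0_{2,1}}$ already belongs to $\mathcal{E}_1$. The gain of one derivative or of the orthogonal projection then supplies the extra decay, producing $\|\mathcal{R}(\tau)\|_{\dot{B}^{-s}_{2,\infty}}\lesssim(1+\tau)^{-\alpha}\,\mathcal{E}^2(\tau)$ with $\alpha>1$.

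It then remains to close the time integral by the standard weighted-convolution splitting $\int_0^t=\int_0^{t/2}+\int_{t/2}^t$: on $[0,t/2]$ one uses $(1+t-\tau)^{-\frac{s+\ell}{2}}\approx(1+t)^{-\frac{s+\ell}{2}}$ against the integrable weight $(1+\tau)^{-\alpha}$, and on $[t/2,t]$ one uses $(1+\tau)^{-\alpha}\approx(1+t)^{-\alpha}$ and integrates the remaining factor, which gives the asserted bound $(1+t)^{-\frac{s+\ell}{2}}\mathcal{E}^2(t)$. I expect the genuine obstacle to be precisely the nonlinear product bound in $\dot{B}^{-s}_{2,\infty}$: since this space has negative order and $r=\infty$ summability it is not an algebra and the Moser-type law of Proposition \ref{prop3.1} is inapplicable, so one must run the paraproduct analysis carefully and verify, for \emph{every} admissible pair $s\in(0,n/2]$ and $\ell$, that the regularity and the decay can be distributed between the two factors so that the outcome stays within $\mathcal{E}^2$ (rather than $\mathcal{E}_0\mathcal{E}$) and that the resulting exponent $\alpha$ genuinely exceeds $1$.
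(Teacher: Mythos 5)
Your reductions coincide exactly with the paper's: the frequency-localized Duhamel formula of Lemma \ref{lem5.1}, the linear low-frequency bounds (\ref{R-E44}) and (\ref{R-E448}), and the passage to $\int_0^t(1+t-\tau)^{-\frac{s+\ell}{2}}\|\mathcal{R}(\tau)\|_{\dot{B}^{-s}_{2,\infty}}\,d\tau$ followed by a split-convolution estimate are all what the paper does. The gap is precisely the step you yourself flag at the end as the ``genuine obstacle'' and never carry out: the bound $\|\mathcal{R}(\tau)\|_{\dot{B}^{-s}_{2,\infty}}\lesssim(1+\tau)^{-\alpha}\mathcal{E}^2(\tau)$ with $\alpha>1$. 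Your proposed route for it --- a Bony paraproduct analysis directly in $\dot{B}^{-s}_{2,\infty}$ --- faces two concrete difficulties. First, the functionals $\mathcal{E}_1,\mathcal{E}_2$ control only nonnegative-order norms ($\|\Lambda^{\ell}\cdot\|_{B^{\sigma_c-1-\ell}_{2,1}}$ and $\|\Lambda^{\sigma_c-1}\cdot\|_{\dot{B}^0_{2,1}}$), and, unlike the linear identity (\ref{R-E42}), there is no a priori control of $\|\tilde w(\tau)\|_{\dot{B}^{-s}_{2,\infty}}$ for the nonlinear solution; hence any paraproduct term in which a factor is measured at negative order cannot be paired with a decaying quantity. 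One can avoid negative-order factors by the law $\|fg\|_{\dot{B}^{-s}_{2,\infty}}\lesssim\|f\|_{\dot{B}^{n/2-s}_{2,1}}\|g\|_{L^2}$ (valid for $0<s<n/2$), but then the best available bound $\|\mathcal{H}_1(\tilde w)\|_{\dot{B}^{n/2-s}_{2,1}}\lesssim\|\tilde w\|_{B^{\sigma_c-1}_{2,1}}$ carries only the weight $(1+\tau)^{-s/2}$, and the resulting exponent is $\alpha=s+\tfrac12$, which is \emph{not} larger than $1$ for $s\le\tfrac12$. Second, at the endpoint $s=n/2$ the remainder term in Bony's decomposition is borderline (sum of regularities equal to zero with $r=\infty$ summability), so the product law itself degenerates. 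Nothing in your sketch resolves either point.

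The ingredient you are missing is the paper's reduction to Lebesgue spaces: by Lemma \ref{lem8.5}, $L^p\hookrightarrow\dot{B}^{-s}_{2,\infty}$ with $1/p=s/n+1/2$, so it suffices to bound $\|\mathcal{R}(\tau)\|_{L^p}$. This is done by H\"older's inequality, $\|\mathcal{R}_1\|_{L^p}\lesssim\|\tilde w\|_{L^{n/s}}\|\nabla\tilde w\|_{L^2}$ (and similarly for $\mathcal{R}_{21},\mathcal{R}_{22}$ with $\|(I-\mathcal{P})\tilde w\|_{L^2}$ in place of the gradient, using (\ref{R-E9})), followed by the improved Gagliardo--Nirenberg inequality of Lemma \ref{lem8.4}, which expresses $\|\tilde w\|_{L^{n/s}}$ through $\|\Lambda\tilde w\|_{L^2}$ and $\|\Lambda^{\alpha}\tilde w\|_{L^2}$ when $0<s\le n/2-1$, or through $\|\tilde w\|_{L^2}$ and $\|\Lambda\tilde w\|_{L^2}$ when $n/2-1<s\le n/2$. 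All of these norms \emph{are} controlled, with decay, by $\mathcal{E}_1$ and $\mathcal{E}_2$; in the first regime both factors carry at least one derivative, which is exactly how the paper reaches the exponent $s+1>1$ (resp.\ $s+\tfrac12>1$ in the second regime, where $s>n/2-1\ge\tfrac12$), yielding (\ref{R-E66}) and closing the convolution integral. Your proposal reproduces the paper's outer structure but replaces this decisive $L^p$-embedding/Gagliardo--Nirenberg step by an unexecuted paraproduct computation that, as written, cannot deliver an integrable decay exponent for all admissible $s$.
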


\begin{proof} We first prove (\ref{R-E55}).
From (\ref{R-E44}), we have
\begin{eqnarray}
\|\Delta_{-1}\Lambda^{\ell}[\mathcal{G}(x,t)\tilde{w}_{0}]\|_{L^2} \lesssim \|\tilde{w}_{0}\|_{\dot{B}^{-s}_{2,\infty}} (1+t)^{-\frac{s+\ell}{2}}. \label{R-E56}
\end{eqnarray}
On the other hand, it follows from (\ref{R-E50}) and (\ref{R-E56}) that
\begin{eqnarray}
&&\|\Delta_{-1}\Lambda^{\ell}\tilde{w}\|_{L^2}
\nonumber\\&\leq&\|\Delta_{-1}\Lambda^{\ell}[\mathcal{G}(x,t)\tilde{w}_{0}]\|_{L^2}+\int^{t}_{0}\|\Delta_{-1}\Lambda^{\ell}[\mathcal{G}(x,t-\tau)\mathcal{R}(\tau)]\|_{L^2}d\tau.
\nonumber\\&\lesssim &  \|\tilde{w}_{0}\|_{\dot{B}^{-s}_{2,\infty}} (1+t)^{-\frac{s+\ell}{2}}+
\int^{t}_{0}(1+t-\tau)^{-\frac{s+\ell}{2}}\|\mathcal{R}(\tau)\|_{\dot{B}^{-s}_{2,\infty}}d\tau
\nonumber\\&\lesssim &  \|\tilde{w}_{0}\|_{\dot{B}^{-s}_{2,\infty}} (1+t)^{-\frac{s+\ell}{2}}+
\int^{t}_{0}(1+t-\tau)^{-\frac{s+\ell}{2}}\|\mathcal{R}(\tau)\|_{L^p}d\tau, \label{R-E57}
\end{eqnarray}
for $1/p=s/n+1/2$, where we used Lemma \ref{lem8.5} in the last step. Our next object is to estimate the norm
$\|\mathcal{R}(\tau)\|_{L^p}$ for partial source terms.

Note that $\|\mathcal{R}(\tau)\|_{L^p}\leq\|\mathcal{R}_{1}(\tau)\|_{L^p}+\|\mathcal{R}_{2}(\tau)\|_{L^p}$, so we first deal with the norm
$\|\mathcal{R}_{1}(\tau)\|_{L^p}$. To do this, we need to develop the Gagliardo-Nirenberg-Sobolev inequality as in \cite{N}, which allows to the fractional derivatives, see Lemma \ref{lem8.4}. In the following, we estimate the norm
$\|\mathcal{R}_{1}(\tau)\|_{L^p}$  by using different interpolation inequalities.

\underline{\textit{Case 1} ($0<s\leq n/2-1$)}
Set $\mathcal{H}_{1}(\tilde{w}):=\tilde{A}^{0}(V)^{-1}\tilde{A}^{j}(V)-(\tilde{A}^{0})^{-1}\tilde{A}^{j}$. It follows from the H\"{o}lder's inequality and Taylor's formula of first order that
\begin{eqnarray}
\|\mathcal{R}_{1}(\tau)\|_{L^p}&\lesssim &\|\mathcal{H}_{1}(\tilde{w})\|_{L^{n/s}}\|\nabla\tilde{w}\|_{L^2}
\nonumber\\&
\lesssim & \|\tilde{w}\|_{L^{n/s}}\|\nabla\tilde{w}\|_{L^2},
\end{eqnarray}
since the global classical solution $V(t,x)$ takes values in a neighborhood of
$\bar{V}\in\mathcal{M}$. Applying Lemma \ref{lem8.4} (taking $r=2$) and Young's inequality to get
\begin{eqnarray}
\|\mathcal{R}_{1}(\tau)\|_{L^p}&\lesssim &\|\Lambda\tilde{w}\|^{\theta}_{L^2}\|\Lambda^{\alpha}\tilde{w}\|^{1-\theta}_{L^2}\|\Lambda\tilde{w}\|_{L^2}
\nonumber\\&
\lesssim & (\|\Lambda\tilde{w}\|_{L^2}+\|\Lambda^{\alpha}\tilde{w}\|_{L^2})\|\Lambda\tilde{w}\|_{L^2}, \label{R-E58}
\end{eqnarray}
where $n/2-s<\alpha\leq \sigma_{c}-1$ and $\theta=\frac{\alpha+s-n/2}{\alpha-1}$. Recall the definition of $\mathcal{E}_{1}(t)$,
we need to estimate the term of the right side of (\ref{R-E58}) at different manners.

In case that $n/2-s<\alpha<\sigma_{c}-1$, it follows from (\ref{R-E58}) that
\begin{eqnarray}
\|\mathcal{R}_{1}(\tau)\|_{L^p}&\lesssim &(\|\Lambda\tilde{w}\|_{B^{\sigma_{c}-2}_{2,1}}
+\|\Lambda^{\alpha}\tilde{w}\|_{B^{\sigma_{c}-1-\alpha}_{2,1}})\|\Lambda\tilde{w}\|_{B^{\sigma_{c}-2}_{2,1}}
\nonumber\\&
\lesssim &\Big[(1+\tau)^{-\frac{s}{2}-\frac{1}{2}}+(1+\tau)^{-\frac{s}{2}-\frac{\alpha}{2}}\Big](1+\tau)^{-\frac{s}{2}-\frac{1}{2}}\mathcal{E}^2_{1}(t)
\nonumber\\&
\lesssim &(1+\tau)^{-s-1}\mathcal{E}^2_{1}(t). \label{R-E59}
\end{eqnarray}
In case that $\alpha=\sigma_{c}-1$, it follows from (\ref{R-E58}) that
\begin{eqnarray}
\|\mathcal{R}_{1}(\tau)\|_{L^p}&\lesssim &(\|\Lambda\tilde{w}\|_{B^{\sigma_{c}-2}_{2,1}}+\|\Lambda^{\sigma_{c}-1}\tilde{w}\|_{\dot{B}^{0}_{2,1}})\|\Lambda\tilde{w}\|_{B^{\sigma_{c}-2}_{2,1}}
\nonumber\\&
\lesssim &\Big[(1+\tau)^{-\frac{s}{2}-\frac{1}{2}}+(1+\tau)^{-\frac{s}{2}-\frac{\sigma_{c}-1}{2}}\Big](1+\tau)^{-\frac{s}{2}-\frac{1}{2}}\mathcal{E}^2_{1}(t)
\nonumber\\&
\lesssim &(1+\tau)^{-s-1}\mathcal{E}^2_{1}(t)\label{R-E599}
\end{eqnarray}
Here, let us point out $s+1>1$, which is crucial in the sequent integral with respect to $t$.

\underline{\textit{Case 2} ($n/2-1<s\leq n/2$)}  It follows from the H\"{o}lder's inequality,
Lemma \ref{lem8.4} and Young's inequality that
\begin{eqnarray}
\|\mathcal{R}_{1}(\tau)\|_{L^p}&\lesssim & \|\tilde{w}\|_{L^{n/s}}\|\nabla\tilde{w}\|_{L^2}
\nonumber\\&
\lesssim &\|\tilde{w}\|^{\theta}_{L^{2}}\|\Lambda \tilde{w}\|^{1-\theta}_{L^2}\|\Lambda\tilde{w}\|_{L^2}
\nonumber\\&
\lesssim &(\|\tilde{w}\|_{L^{2}}+\|\Lambda\tilde{w}\|_{L^2})\|\Lambda\tilde{w}\|_{L^2}, \label{R-E60}
\end{eqnarray}
where $\theta=1+s-n/2$.  Recall the definition of $\mathcal{E}_{1}(t)$, we arrive at
\begin{eqnarray}
\|\mathcal{R}_{1}(\tau)\|_{L^p}&\lesssim & (\|\tilde{w}\|_{B^{\sigma_{c}-1}_{2,1}}+\|\Lambda\tilde{w}\|_{B^{\sigma_{c}-2}_{2,1}})\|\Lambda\tilde{w}\|_{B^{\sigma_{c}-2}_{2,1}}
\nonumber\\&
\lesssim &\Big[(1+\tau)^{-\frac{s}{2}}+(1+\tau)^{-\frac{s}{2}-\frac{1}{2}}\Big](1+\tau)^{-\frac{s}{2}-\frac{1}{2}}\mathcal{E}^2_{1}(t)
\nonumber\\&
\lesssim& (1+\tau)^{-s-\frac{1}{2}}\mathcal{E}^2_{1}(t), \label{R-E61}
\end{eqnarray}
where $s+1/2>(n-1)/2\geq1$.

Next, we bound the norm $\|\mathcal{R}_{2}(\tau)\|_{L^p}$. For $\mathcal{R}_{2}$, we write $\mathcal{R}_{2}:=\mathcal{R}_{21}+\mathcal{R}_{22}$. Set
 $\mathcal{H}_{2}(\tilde{w}):=-(\tilde{A}^{0}(V)^{-1}-(\tilde{A}^{0})^{-1})$. Then
\begin{eqnarray}
 \|\mathcal{R}_{21}\|_{L^p}\lesssim\|\mathcal{H}_{2}(\tilde{w})\|_{L^{n/s}}\|(I-\mathcal{P})\tilde{w}\|_{L^2}. \label{R-E62}
\end{eqnarray}
Thanks to the time-weighted quantities $\mathcal{E}_{1}(t)$ and $\mathcal{E}_{2}(t)$, it follows
from the similar analysis as (\ref{R-E58})-(\ref{R-E61}) that
\begin{eqnarray}
 \|\mathcal{R}_{21}\|_{L^p}\lesssim
 \begin{cases}
 (1+\tau)^{-s-1}\mathcal{E}_{1}(t)\mathcal{E}_{2}(t),\ \ \ \ 0<s\leq n/2-1;\\
 (1+\tau)^{-s-\frac{1}{2}}\mathcal{E}_{1}(t)\mathcal{E}_{2}(t),\ \ \ n/2-1<s\leq n/2.
 \end{cases} \label{R-E63}
\end{eqnarray}
Recall the useful inequality (\ref{R-E9}), we have
\begin{eqnarray}
 \|\mathcal{R}_{22}\|_{L^p}\lesssim\|\tilde{r}(V)\|_{L^p}\lesssim\|\tilde{w}\|_{L^{n/s}}\|(I-\mathcal{P})\tilde{w}\|_{L^2}. \label{R-E64}
\end{eqnarray}
Furthmore, in a similar way, we can also arrive at
\begin{eqnarray}
 \|\mathcal{R}_{22}\|_{L^p}\lesssim
 \begin{cases}
 (1+\tau)^{-s-1}\mathcal{E}_{1}(t)\mathcal{E}_{2}(t),\ \ \ \ 0<s\leq n/2-1;\\
 (1+\tau)^{-s-\frac{1}{2}}\mathcal{E}_{1}(t)\mathcal{E}_{2}(t),\ \ \ n/2-1<s\leq n/2.
 \end{cases}\label{R-E65}
\end{eqnarray}
Together with inequalities (\ref{R-E59}), (\ref{R-E599}), (\ref{R-E61}), (\ref{R-E63}) and (\ref{R-E65}), we conclude that
\begin{eqnarray}
\|\mathcal{R}(\tau)\|_{L^p}\lesssim
 \begin{cases}
 (1+\tau)^{-s-1}\mathcal{E}^2(t),\ \ \ \ 0<s\leq n/2-1;\\
 (1+\tau)^{-s-\frac{1}{2}}\mathcal{E}^2(t),\ \ \ n/2-1<s\leq n/2.
 \end{cases} \label{R-E66}
\end{eqnarray}
Therefore, we have
\begin{eqnarray}
&&\int^{t}_{0}(1+t-\tau)^{-\frac{s+\ell}{2}}\|\mathcal{R}(\tau)\|_{L^p}d\tau\nonumber\\&
\lesssim &  \begin{cases}
 \int^{t}_{0}(1+t-\tau)^{-\frac{s+\ell}{2}}(1+\tau)^{-s-1}\mathcal{E}^2(t)d\tau,\ \ \ \ 0<s\leq n/2-1;\\
 \int^{t}_{0}(1+t-\tau)^{-\frac{s+\ell}{2}}(1+\tau)^{-s-\frac{1}{2}}\mathcal{E}^2(t)d\tau,\ \ \ n/2-1<s\leq n/2;
 \end{cases}
 \nonumber\\&
\lesssim &
 (1+t)^{-\frac{s+\ell}{2}}\mathcal{E}^2(t). \label{R-E67}
\end{eqnarray}
Noticing that (\ref{R-E57}) and (\ref{R-E67}), the desired inequality (\ref{R-E55}) is followed immediately.\\

Concerning $\ell=\sigma_{c}-1$, from (\ref{R-E448}), we have
\begin{eqnarray}
\sum_{q<0}\|\dot{\Delta}_{q}\Lambda^{\sigma_{c}-1}[\mathcal{G}(t,x)\tilde{w}_{0}]\|_{L^2} \lesssim \|\tilde{w}_{0}\|_{\dot{B}^{-s}_{2,\infty}} (1+t)^{-\frac{s+\sigma_{c}-1}{2}}. \label{R-E566}
\end{eqnarray}
It follows from (\ref{R-E500}) and (\ref{R-E566}) that
\begin{eqnarray}
&&\sum_{q<0}\|\dot{\Delta}_{q}\Lambda^{\sigma_{c}-1}\tilde{w}\|_{L^2}
\nonumber\\&\leq&\sum_{q<0}
\|\dot{\Delta}_{q}\Lambda^{\sigma_{c}-1}[\mathcal{G}(t,x)\tilde{w}_{0}]\|_{L^2}
+\int^{t}_{0}\sum_{q<0}\|\dot{\Delta}_{q}\Lambda^{\sigma_{c}-1}[\mathcal{G}(t-\tau, x)\mathcal{R}(\tau)]\|_{L^2}d\tau.
\nonumber\\&\lesssim &  \|\tilde{w}_{0}\|_{\dot{B}^{-s}_{2,\infty}} (1+t)^{-\frac{s+\sigma_{c}-1}{2}}+
\int^{t}_{0}(1+t-\tau)^{-\frac{s+\sigma_{c}-1}{2}}\|\mathcal{R}(\tau)\|_{\dot{B}^{-s}_{2,\infty}}d\tau
\nonumber\\&\lesssim &  \|\tilde{w}_{0}\|_{\dot{B}^{-s}_{2,\infty}} (1+t)^{-\frac{s+\sigma_{c}-1}{2}}+
\int^{t}_{0}(1+t-\tau)^{-\frac{s+\sigma_{c}-1}{2}}\|\mathcal{R}(\tau)\|_{L^p}d\tau, \label{R-E577}
\end{eqnarray}
Just doing the same procedure leading to (\ref{R-E55}), we can obtain (\ref{R-E555}).

Hence, the proof of Lemma \ref{lem5.2} is complete.
\end{proof}

\begin{lem}\label{lem5.3}
 Under the assumption of Proposition \ref{prop5.2}, we have
\begin{eqnarray}
&&\sum_{q\geq0}2^{q(\sigma_{c}-1-\ell)}\|\Delta_{q}\Lambda^{\ell}\tilde{w}\|_{L^2}\nonumber\\&\lesssim& \|\tilde{w}_{0}\|_{B^{\sigma_{c}}_{2,1}}e^{-c_{1}t}+(1+t)^{-\frac{s+\ell}{2}}\mathcal{E}^{2}_{1}(t)
+(1+t)^{-\frac{s+\ell}{2}}\mathcal{E}_{0}(t)\mathcal{E}_{1}(t) \label{R-E68}
\end{eqnarray}
for $0\leq\ell\leq\sigma_{c}-1$.
\end{lem}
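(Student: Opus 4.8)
The plan is to combine the frequency-localization Duhamel principle of Lemma \ref{lem5.1} with the high-frequency half of the linear decay estimate established inside the proof of Proposition \ref{prop4.1}. Expressing $\tilde{w}$ through (\ref{R-E50}), I would split the left-hand side of (\ref{R-E68}) into the free part $\Delta_q\Lambda^\ell[\mathcal{G}(t,x)\tilde{w}_0]$ and the Duhamel part $\int_0^t\Delta_q\Lambda^\ell[\mathcal{G}(t-\tau,x)\mathcal{R}(\tau)]\,d\tau$. For the free part, the computation leading to (\ref{R-E37}) (Case 1, $q\geq0$) gives exponential decay; since $2^{q(\sigma_c-1)}\leq 2^{q\sigma_c}$ for $q\geq0$, summing yields $\sum_{q\geq0}2^{q(\sigma_c-1-\ell)}\|\Delta_q\Lambda^\ell[\mathcal{G}(t,x)\tilde{w}_0]\|_{L^2}\lesssim e^{-c_1 t}\|\tilde{w}_0\|_{\dot{B}^{\sigma_c-1}_{2,1}}\lesssim e^{-c_1 t}\|\tilde{w}_0\|_{B^{\sigma_c}_{2,1}}$, which is the first term of (\ref{R-E68}).

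For the Duhamel part, applying the same high-frequency estimate to $\mathcal{G}(t-\tau,x)\mathcal{R}(\tau)$ and using Bernstein's Lemma \ref{lem3.1}(ii) to absorb $\Lambda^\ell$ (so that the weighted high-frequency sum collapses to the full homogeneous norm and the index $\ell$ drops out), one reduces matters to $\int_0^t e^{-c_2(t-\tau)}\|\mathcal{R}(\tau)\|_{\dot{B}^{\sigma_c-1}_{2,1}}\,d\tau$. The crucial task is therefore to prove the nonlinear bound $\|\mathcal{R}(\tau)\|_{\dot{B}^{\sigma_c-1}_{2,1}}\lesssim(1+\tau)^{-\frac{s+\sigma_c-1}{2}}\big(\mathcal{E}_1^2(t)+\mathcal{E}_0(t)\mathcal{E}_1(t)\big)$, i.e. at the fastest admissible rate $\frac{s+\sigma_c-1}{2}$. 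Inserting this and using the elementary convolution estimate $\int_0^t e^{-c_2(t-\tau)}(1+\tau)^{-a}\,d\tau\lesssim(1+t)^{-a}$ yields the Duhamel part at rate $\frac{s+\sigma_c-1}{2}$; since $\frac{s+\sigma_c-1}{2}\geq\frac{s+\ell}{2}$ for $0\leq\ell\leq\sigma_c-1$, the claimed bound (\ref{R-E68}) follows for every such $\ell$, the LHS being essentially $\ell$-independent.

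To establish the nonlinear bound I would estimate $\mathcal{R}=\mathcal{R}_1+\mathcal{R}_2$ using the homogeneous Moser estimate of Proposition \ref{prop3.1} at the critical index $s_1=s_2=\sigma_c-1=n/2$ together with the composition estimate Proposition \ref{prop3.2}. For $\mathcal{R}_1\sim\mathcal{H}_1(\tilde{w})\nabla\tilde{w}$, I would place $\mathcal{H}_1(\tilde{w})$ in $\dot{B}^{\sigma_c-1}_{2,1}$, where Proposition \ref{prop3.2} and the last term of $\mathcal{E}_1$ give $\|\mathcal{H}_1(\tilde{w})\|_{\dot{B}^{\sigma_c-1}_{2,1}}\lesssim\|\tilde{w}\|_{\dot{B}^{\sigma_c-1}_{2,1}}\lesssim(1+\tau)^{-\frac{s+\sigma_c-1}{2}}\mathcal{E}_1$, while the factor $\|\nabla\tilde{w}\|_{\dot{B}^{\sigma_c-1}_{2,1}}\approx\|\tilde{w}\|_{\dot{B}^{\sigma_c}_{2,1}}\lesssim\mathcal{E}_0$ costs one extra derivative but carries no decay; this produces the $\mathcal{E}_0\mathcal{E}_1$ contribution. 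For $\mathcal{R}_2$ (both $\mathcal{R}_{21}$ and the term coming from $\tilde{r}(V)$, handled through (\ref{R-E9})), neither factor carries a spatial derivative, so bounding $(I-\mathcal{P})\tilde{w}$ crudely by $\tilde{w}$ and giving both factors the top rate produces $(1+\tau)^{-(s+\sigma_c-1)}\mathcal{E}_1^2$, which is even faster than needed and furnishes the $\mathcal{E}_1^2$ term.

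The main obstacle is precisely this nonlinear estimate of $\mathcal{R}$: a naive appeal to the $L^\infty$-type product estimate would pair the undifferentiated, top-rate factor against $\|\nabla\tilde{w}\|_{L^\infty}$, thereby spending a derivative on a slowly-decaying piece and degrading the rate to only $\frac{s}{2}$, which is insufficient to match the target. The key is to exploit that the full $\dot{B}^{\sigma_c-1}_{2,1}$-norm of $\tilde{w}$ already decays at the optimal rate (the last term of $\mathcal{E}_1$), and to route the unavoidable loss of one derivative onto the no-decay energy $\mathcal{E}_0$ via the critical homogeneous product estimate; the exponential kernel $e^{-c_2(t-\tau)}$ then renders the time integral harmless, in sharp contrast with the delicate low-frequency integral of Lemma \ref{lem5.2}.
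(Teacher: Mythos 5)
Your proof is correct, and its skeleton---the frequency-localized Duhamel formula (\ref{R-E50}) plus the exponential high-frequency decay (\ref{R-E36})--(\ref{R-E37}), followed by Moser/composition estimates (Propositions \ref{prop3.1}--\ref{prop3.2}) on $\mathcal{R}$ that route the unavoidable derivative loss onto the no-decay energy $\mathcal{E}_0$---is the same as the paper's ((\ref{R-E70})--(\ref{R-E74})). The one genuine deviation is the rate bookkeeping for the nonlinear term. The paper estimates $\|\Lambda^{\ell}\mathcal{R}\|_{\dot{B}^{\sigma_{c}-1-\ell}_{2,1}}$ at the $\ell$-dependent rate $(1+t)^{-\frac{s+\ell}{2}}$, which forces a case distinction in (\ref{R-E72})--(\ref{R-E73}): for $0\leq\ell<\sigma_{c}-1$ the decaying factor is measured in the inhomogeneous norm $\|\Lambda^{\ell}\tilde{w}\|_{B^{\sigma_{c}-1-\ell}_{2,1}}$, and only at the endpoint $\ell=\sigma_{c}-1$ in $\dot{B}^{0}_{2,1}$. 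You instead observe that the left-hand side of (\ref{R-E68}) is essentially $\ell$-independent (Bernstein collapses the weighted sum to $\sum_{q\geq0}2^{q(\sigma_{c}-1)}\|\Delta_{q}\tilde{w}\|_{L^2}$), so it suffices to bound $\|\mathcal{R}\|_{\dot{B}^{\sigma_{c}-1}_{2,1}}$ once, at the top rate $(1+t)^{-\frac{s+\sigma_{c}-1}{2}}$, by always invoking the endpoint homogeneous piece of $\mathcal{E}_1$ (legitimate, since all the norms $\|\Lambda^{\ell}\tilde{w}\|_{\dot{B}^{\sigma_{c}-1-\ell}_{2,1}}$ are equivalent); this avoids the case split, gives a slightly stronger bound for the Duhamel contribution, and covers all $\ell\in[0,\sigma_{c}-1]$ simultaneously. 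You are also more careful than the paper at one point: the last line of (\ref{R-E70}) compresses the time integral as $\int_{0}^{t}e^{-c_{1}(t-\tau)}\|\Delta_{q}\Lambda^{\ell}\mathcal{R}(\tau)\|_{L^2}d\tau\lesssim\|\Delta_{q}\Lambda^{\ell}\mathcal{R}(t)\|_{L^2}$, which is only meaningful once the polynomial decay of $\mathcal{R}$ is in hand; your explicit convolution estimate $\int_{0}^{t}e^{-c(t-\tau)}(1+\tau)^{-a}d\tau\lesssim(1+t)^{-a}$ is the honest version of that step.
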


\begin{proof} Due to $\Delta_{q}f\equiv\dot{\Delta}_{q}f(q\geq0$), it is suffice to show (\ref{R-E68}) for the inhomogeneous case.
From (\ref{R-E36}), we have
\begin{eqnarray}
\|\Delta_{q}\Lambda^{\ell}\mathcal{G}(x,t)\tilde{w}_{0}\|_{L^2}\lesssim e^{-c_{2}t}\|\Delta_{q}\Lambda^{\ell}\tilde{w}_{0}\|_{L^2} \label{R-E69}
\end{eqnarray}
for all $q\geq0$.  Then it follows from (\ref{R-E50}) and (\ref{R-E69}) that
\begin{eqnarray}
&&\|\Delta_{q}\Lambda^{\ell}\tilde{w}\|_{L^2}
\nonumber\\&\leq&\|\Delta_{q}\Lambda^{\ell}[\mathcal{G}(x,t)\tilde{w}_{0}]\|_{L^2}+\int^{t}_{0}\|\Delta_{q}\Lambda^{\ell}[\mathcal{G}(x,t-\tau)\mathcal{R}(\tau)]\|_{L^2}d\tau
\nonumber\\&\lesssim& e^{-c_{1}t}\|\Delta_{q}\Lambda^{\ell}\tilde{w}_{0}\|_{L^2}+\int^{t}_{0}e^{-c_{1}(t-\tau)}\|\Delta_{q}\Lambda^{\ell}\mathcal{R}(\tau)\|_{L^2}d\tau
\nonumber\\&\lesssim& e^{-c_{1}t}\|\Delta_{q}\Lambda^{\ell}\tilde{w}_{0}\|_{L^2}+\|\Delta_{q}\Lambda^{\ell}\mathcal{R}(t)\|_{L^2} \label{R-E70}
\end{eqnarray}
which implies that
\begin{eqnarray}
\sum_{q\geq0}2^{q(\sigma_{c}-1-\ell)}\|\Delta_{q}\Lambda^{\ell}\tilde{w}\|_{L^2}\lesssim \|\tilde{w}_{0}\|_{B^{\sigma_{c}}_{2,1}}e^{-c_{1}t}+\|\Lambda^{\ell}\mathcal{R}(t)\|_{\dot{B}^{\sigma_{c}-1-\ell}_{2,1}} \label{R-E71}
\end{eqnarray}
for $0\leq\ell\leq\sigma_{c}-1$.

Using Propositions \ref{prop3.1}-\ref{prop3.2}, and Lemma \ref{lem3.2}, the nonlinear term in the right side of (\ref{R-E71}) can be estimated as follows:
\begin{eqnarray}
\|\Lambda^{\ell}\mathcal{R}_{1}(t)\|_{\dot{B}^{\sigma_{c}-1-\ell}_{2,1}}&\lesssim& \|\mathcal{H}_{1}(\tilde{w})\nabla\tilde{w}\|_{\dot{B}^{\sigma_{c}-1}_{2,1}}
\nonumber\\&\lesssim& \|\Lambda^{\ell}\tilde{w}\|_{\dot{B}^{\sigma_{c}-1-\ell}_{2,1}}\|\tilde{w}\|_{\dot{B}^{\sigma_{c}}_{2,1}}
\nonumber\\&\lesssim&
\begin{cases}
\|\Lambda^{\ell}\tilde{w}\|_{B^{\sigma_{c}-1-\ell}_{2,1}}\|\tilde{w}\|_{B^{\sigma_{c}}_{2,1}}, \ \ 0\leq\ell<\sigma_{c}-1;\\
\|\Lambda^{\sigma_{c}-1}\tilde{w}\|_{\dot{B}^{0}_{2,1}}\|\tilde{w}\|_{B^{\sigma_{c}}_{2,1}}, \ \ \ell=\sigma_{c}-1;
 \end{cases}
\nonumber\\&\lesssim& (1+t)^{-\frac{s+\ell}{2}}\mathcal{E}_{0}(t)\mathcal{E}_{1}(t).  \label{R-E72}
\end{eqnarray}
 Similarly, we have
\begin{eqnarray}
&&\|\Lambda^{\ell}\mathcal{R}_{2}(t)\|_{\dot{B}^{\sigma_{c}-1-\ell}_{2,1}}\nonumber\\&\leq & \|\Lambda^{\ell}\mathcal{R}_{21}(t)\|_{\dot{B}^{\sigma_{c}-1-\ell}_{2,1}}+
\|\Lambda^{\ell}\mathcal{R}_{22}(t)\|_{\dot{B}^{\sigma_{c}-1-\ell}_{2,1}}\nonumber\\&\lesssim&
\|\mathcal{H}_{2}(\tilde{w})\|_{\dot{B}^{\sigma_{c}-1}_{2,1}}\|(I-\mathcal{P})\tilde{w}\|_{\dot{B}^{\sigma_{c}-1}_{2,1}}
+\|\tilde{A}^{0}(\tilde{w})^{-1}\|_{\dot{B}^{\sigma_{c}-1}_{2,1}}\|\tilde{r}(\tilde{w})\|_{\dot{B}^{\sigma_{c}-1}_{2,1}}
\nonumber\\&\lesssim&
\begin{cases}
\|\Lambda^{\ell}\tilde{w}\|_{B^{\sigma_{c}-1-\ell}_{2,1}}\|\tilde{w}\|_{B^{\sigma_{c}}_{2,1}}+\|\Lambda^{\ell}\tilde{w}\|^2_{B^{\sigma_{c}-1-\ell}_{2,1}}, \ \ 0\leq\ell<\sigma_{c}-1;\\
\|\Lambda^{\sigma_{c}-1}\tilde{w}\|_{\dot{B}^{0}_{2,1}}\|\tilde{w}\|_{B^{\sigma_{c}}_{2,1}}+\|\Lambda^{\sigma_{c}-1}\tilde{w}\|^2_{\dot{B}^{0}_{2,1}}, \ \ \ell=\sigma_{c}-1;
 \end{cases}
\nonumber\\&\lesssim&(1+t)^{-\frac{s+\ell}{2}}\mathcal{E}_{0}(t)\mathcal{E}_{1}(t)+(1+t)^{-\frac{s+\ell}{2}}\mathcal{E}^{2}_{1}(t). \label{R-E73}
\end{eqnarray}
Then it follows from (\ref{R-E72})-(\ref{R-E73}) that
\begin{eqnarray}
\|\Lambda^{\ell}\mathcal{R}(t)\|_{\dot{B}^{\sigma_{c}-1-\ell}_{2,1}}\lesssim(1+t)^{-\frac{s+\ell}{2}}\mathcal{E}^{2}_{1}(t)
+(1+t)^{-\frac{s+\ell}{2}}\mathcal{E}_{0}(t)\mathcal{E}_{1}(t). \label{R-E74}
\end{eqnarray}
Finally, together with (\ref{R-E71}) and (\ref{R-E74}), we arrive at (\ref{R-E68}).
\end{proof}


\begin{lem}\label{lem5.4}
Under the assumption of Proposition \ref{prop5.2}, we have
\begin{eqnarray}
&&\|(I-\mathcal{P})\Lambda^{\ell}\tilde{w}\|_{B^{\sigma_{c}-2-\ell}_{2,1}}\nonumber\\&\lesssim& e^{-ct}\|(I-\mathcal{P})\tilde{w}_{0}\|_{B^{\sigma_{c}-2}_{2,1}}
+(1+\tau)^{-s-\frac{\ell+1}{2}}\mathcal{E}^2_{1}(t)
\nonumber\\&&+(1+\tau)^{-s-\frac{\ell+1}{2}}\mathcal{E}_{1}(t)\mathcal{E}_{2}(t), \label{R-E755}
\end{eqnarray}
for $0\leq\ell<\sigma_{c}-2$, and
\begin{eqnarray}
&&\|(I-\mathcal{P})\Lambda^{\sigma_{c}-2}\tilde{w}\|_{\dot{B}^{0}_{2,1}}\nonumber\\&\lesssim& e^{-ct}\|(I-\mathcal{P})\tilde{w}_{0}\|_{B^{\sigma_{c}-2}_{2,1}}
+(1+\tau)^{-s-\frac{\sigma_{c}-1}{2}}\mathcal{E}^2_{1}(t)
\nonumber\\&&+(1+\tau)^{-s-\frac{\sigma_{c}-1}{2}}\mathcal{E}_{1}(t)\mathcal{E}_{2}(t). \label{R-E75}
\end{eqnarray}
\end{lem}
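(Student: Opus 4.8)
The plan is to exploit the normal-form structure of Definition \ref{defn2.3}: along the splitting $\mathbb{R}^{N}=\mathcal{M}\oplus\mathcal{M}^{\perp}$ the matrix $\tilde{A}^{0}$ is block-diagonal, $L$ annihilates $\mathcal{M}$, and by Definition \ref{defn2.2} together with Theorem \ref{thm2.1} the restriction of $L$ to $\mathcal{M}^{\perp}$ is symmetric positive definite. Acting on (\ref{R-E46}) with $\Delta_{q}\Lambda^{\ell}$ (and with $\dot{\Delta}_{q}$ at the endpoint $\ell=\sigma_{c}-2$) and then applying $I-\mathcal{P}$, and using that $I-\mathcal{P}$ commutes with $\partial_{t}$ and with $\tilde{A}^{0}$ in the normal form while $(I-\mathcal{P})L=L=L(I-\mathcal{P})$, I would isolate the evolution of $w_{2}^{q,\ell}:=(I-\mathcal{P})\Delta_{q}\Lambda^{\ell}\tilde{w}$. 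Forming the inner product with $w_{2}^{q,\ell}$ and using positive-definiteness of $L$ on $\mathcal{M}^{\perp}$ produces an \emph{undifferentiated} dissipation $c\,\|w_{2}^{q,\ell}\|_{L^{2}}^{2}$, in contrast with the frequency-weighted dissipation $\frac{|\xi|^{2}}{1+|\xi|^{2}}|\widehat{\Delta_{q}\tilde{w}}|^{2}$ of (\ref{R-E34}). This undifferentiated dissipation is the whole source of the extra half power of decay carried by the orthogonal part.

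After using Young's inequality to absorb the self-coupling $(I-\mathcal{P})\sum_{j}\tilde{A}^{j}\partial_{x_{j}}(I-\mathcal{P})\tilde{w}$ into the dissipation, I expect a pointwise differential inequality of the schematic form
\[
\frac{d}{dt}\|w_{2}^{q,\ell}\|_{L^{2}}+c\,\|w_{2}^{q,\ell}\|_{L^{2}}\lesssim \|\nabla\Delta_{q}\Lambda^{\ell}\tilde{w}\|_{L^{2}}+\|(I-\mathcal{P})\Delta_{q}\Lambda^{\ell}\mathcal{R}\|_{L^{2}},
\]
whose Duhamel form convolves the exponential kernel $e^{-c(t-\tau)}$ against the right-hand side. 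Multiplying by $2^{q(\sigma_{c}-2-\ell)}$ and summing over $q\ge-1$ (respectively over $q\in\mathbb{Z}$ for the endpoint, using homogeneous blocks exactly as in the passage from (\ref{R-E55}) to (\ref{R-E555}) so as to respect the relation $B^{s}_{2,1}=L^{2}\cap\dot{B}^{s}_{2,1}$ of Lemma \ref{lem3.2}) yields the homogeneous contribution $e^{-ct}\|(I-\mathcal{P})\tilde{w}_{0}\|_{B^{\sigma_{c}-2}_{2,1}}$ together with a time integral of the forcing norm.

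For the nonlinear forcing I would reuse the product and composition estimates of Propositions \ref{prop3.1}--\ref{prop3.2} and the bound (\ref{R-E9}) on $\tilde{r}(V)$, exactly as in (\ref{R-E72})--(\ref{R-E73}). Since $\mathcal{R}_{1}\sim\tilde{w}\,\nabla\tilde{w}$ is quadratic and each of $\mathcal{R}_{21},\mathcal{R}_{22}$ carries a factor of the orthogonal part $(I-\mathcal{P})\tilde{w}$, distributing derivatives and reading off the time weights in $\mathcal{E}_{1}$ and $\mathcal{E}_{2}$ should give $\|(I-\mathcal{P})\Lambda^{\ell}\mathcal{R}\|_{B^{\sigma_{c}-2-\ell}_{2,1}}\lesssim(1+t)^{-s-\frac{\ell+1}{2}}\big(\mathcal{E}_{1}^{2}(t)+\mathcal{E}_{1}(t)\mathcal{E}_{2}(t)\big)$. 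The convolution inequality $\int_{0}^{t}e^{-c(t-\tau)}(1+\tau)^{-a}\,d\tau\lesssim(1+t)^{-a}$ (split at $\tau=t/2$) then collapses the time integral of the nonlinear part to the two stated quadratic rates.

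The main obstacle is the conservative-gradient coupling $(I-\mathcal{P})\sum_{j}\tilde{A}^{j}\partial_{x_{j}}\mathcal{P}\tilde{w}$, the only forcing term driven by the full, slower-decaying solution rather than by the orthogonal part or by the quadratic nonlinearity. The key point is that it appears with one extra spatial derivative: $\|\nabla\Lambda^{\ell}\tilde{w}\|_{B^{\sigma_{c}-2-\ell}_{2,1}}=\|\Lambda^{\ell+1}\tilde{w}\|_{B^{\sigma_{c}-1-(\ell+1)}_{2,1}}$ is precisely a quantity controlled by $\mathcal{E}_{1}$ at index $\ell+1$, hence decays like $(1+\tau)^{-\frac{s+\ell+1}{2}}$, and convolution against $e^{-c(t-\tau)}$ preserves this faster rate. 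Making this half-derivative gain land cleanly at the claimed exponent, while simultaneously tracking the switch from inhomogeneous to homogeneous Besov norms at $\ell=\sigma_{c}-2$ and absorbing the orthogonal self-coupling at high versus low frequencies, is the delicate bookkeeping on which the proof hinges.
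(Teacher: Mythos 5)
Your skeleton does match the paper's proof: both exploit the normal-form splitting so that the $\mathcal{M}^{\perp}$-component is damped by an \emph{undifferentiated} dissipation coming from $L_{22}$, both convert this into an exponential Duhamel formula block by block, sum against $2^{q(\sigma_{c}-2-\ell)}$ over $q\geq -1$ (homogeneous blocks at the endpoint $\ell=\sigma_{c}-2$), and both estimate the genuinely quadratic sources with Propositions \ref{prop3.1}--\ref{prop3.2} and (\ref{R-E9}). The paper implements this through the quasilinear $V_{2}$-equation (\ref{R-E77}), rewritten as the ODE (\ref{R-E78}), whose forcing $\tilde{\mathcal{R}}$ carries \emph{all} the convection terms with their $V$-dependent coefficients.

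There are, however, two genuine problems in your version. First, the self-coupling cannot be ``absorbed into the dissipation by Young's inequality'': on the block $\Delta_{q}$ that term has size $2^{q}\|w_{2}^{q,\ell}\|_{L^{2}}^{2}$, while your dissipation $c\,\|w_{2}^{q,\ell}\|_{L^{2}}^{2}$ carries no derivative, so the absorption fails for all large $q$. (The step is not needed: since $\tilde{A}^{j}$ is constant symmetric and $(I-\mathcal{P})w_{2}^{q,\ell}=w_{2}^{q,\ell}$, this term vanishes identically after integration by parts; alternatively it can simply be left inside the gradient forcing that your schematic inequality already contains.) Second, and more seriously, your handling of the cross-coupling produces the \emph{linear} contribution $(1+t)^{-\frac{s+\ell+1}{2}}\mathcal{E}_{1}(t)$, and this cannot ``land at the claimed exponent'': every term on the right of (\ref{R-E755}) besides the data term is quadratic and carries the strictly faster weight $(1+t)^{-s-\frac{\ell+1}{2}}$, which differs from the weight of your linear term by a factor $(1+t)^{-s/2}$. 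So your argument proves a strictly weaker inequality than the one stated. This is exactly where the paper's proof diverges from yours: in (\ref{R-E78}) the convection terms appear as $\mathcal{H}_{3}(V)\nabla V$ and are estimated in (\ref{R-E82}) as a \emph{product} of two decaying factors, i.e.\ as a quadratic expression yielding $(1+\tau)^{-s-\frac{\ell+1}{2}}\mathcal{E}_{1}^{2}(t)$, with no linear remainder. (One may object that this step of the paper tacitly applies the homogeneous product estimate to $\mathcal{H}_{3}(V)$, whose equilibrium value $-\tilde{A}^{j}_{21}(\bar{V})$ is in general nonzero; splitting off that constant resurrects precisely the linear term you found. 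Moreover your weaker inequality would still close the bootstrap of Proposition \ref{prop5.2}, because (\ref{R-E888}) controls $\mathcal{E}_{1}$ by $E_{0}+\mathcal{E}^{2}$ independently of Lemma \ref{lem5.4}. But judged as a proof of the lemma as stated, your proposal has a gap at exactly this point.)
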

\begin{proof}
Let $(V_{1},V_{2}) $ be the partition of $V$ according to the orthogonal decomposition $\mathbb{R}^{m}=\mathcal{M}\oplus\mathcal{M}^{\bot}$.
According to Theorem \ref{thm2.1}, we rewrite (\ref{R-E7}) as the normal form:
\begin{eqnarray}
\tilde{A}^{0}(V)V_{t}+\sum^{n}_{j=1}\tilde{A}^{j}(V)V_{x_{j}}+LV=\tilde{r}(V) \label{R-E76}
\end{eqnarray}
with
$$\tilde{A}^{0}(V)=\left(
          \begin{array}{cc}
            \tilde{A}^{0}_{11}(V) & 0 \\
            0 & \tilde{A}^{0}_{22}(V) \\
          \end{array}
        \right),\ \ \
    \tilde{A}^{j}(V)= \left(
          \begin{array}{cc}
            \tilde{A}^{j}_{11}(V) & \tilde{A}^{j}_{12}(V) \\
            \tilde{A}^{j}_{21}(V) & \tilde{A}^{j}_{22}(V) \\
          \end{array}
        \right) $$
$$L=\left(
          \begin{array}{cc}
            0 & 0 \\
            0 & L_{22} \\
          \end{array}
        \right),\ \ \
 \tilde{r}(V)=\left(
                \begin{array}{c}
                  0 \\
                   \tilde{r}_{12}(V) \\
                \end{array}
              \right).$$
Furthermore, we have
\begin{eqnarray}
\tilde{A}^{0}_{22}(V)V_{2t}+\sum^{n}_{j=1}[\tilde{A}^{j}_{21}(V)V_{1x_{j}}+\tilde{A}^{j}_{22}(V)V_{2x_{j}}]+L_{22}V_{2}=\tilde{r}_{12}(V). \label{R-E77}
\end{eqnarray}
 It is convenient to write (\ref{R-E77}) as
\begin{eqnarray}
\tilde{A}^{0}_{22}V_{2t}+L_{22}V_{2}=\mathcal{\tilde{R}}, \label{R-E78}
\end{eqnarray}
where $\tilde{A}^{0}_{22}=\tilde{A}^{0}_{22}(\bar{V})$ and
\begin{eqnarray*}\mathcal{\tilde{R}}&=&\tilde{A}^{0}_{22}\Big\{-\tilde{A}^{0}_{22}(V)^{-1}\sum^{n}_{j=1}[\tilde{A}^{j}_{21}(V)V_{1x_{j}}+\tilde{A}^{j}_{22}(V)V_{2x_{j}}]
\nonumber\\&&-[\tilde{A}^{0}_{22}(V)^{-1}-(\tilde{A}^{0}_{22})^{-1}]L_{22}V_{2}+\tilde{A}^{0}_{22}(V)^{-1}\tilde{r}_{12}(V)\Big\}
\nonumber\\&:=&\mathcal{\tilde{R}}_{1}+\mathcal{\tilde{R}}_{2}+\mathcal{\tilde{R}}_{3}.\end{eqnarray*}

Applying the operator $\Delta_{q}\Lambda^{\ell}(q\geq-1, \ 0\leq\ell<\sigma_{c}-2)$ to (\ref{R-E78}) gives
\begin{eqnarray}
\tilde{A}^{0}_{22}\Delta_{q}\Lambda^{\ell}V_{2t}+L_{22}\Delta_{q}\Lambda^{\ell}V_{2}=\Delta_{q}\Lambda^{\ell}\mathcal{\tilde{R}}. \label{R-E79}
\end{eqnarray}
Note that $(\tilde{A}^{0}_{22})^{-1}L_{22}$ is a positive definite matrix. Solving $\Delta_{q}\Lambda^{\ell}V_{2}$ from the ordinary differential equation (\ref{R-E79})
and taking the $L^2$-norm to get
\begin{eqnarray}
&&\|(I-\mathcal{P})\Delta_{q}\Lambda^{\ell}\tilde{w}\|_{L^2}\nonumber\\&\lesssim& e^{-ct}\|(I-\mathcal{P})\Delta_{q}\Lambda^{\ell}\tilde{w}_{0}\|_{L^2}
+\int^{t}_{0}e^{-c(t-\tau)}\|\Delta_{q}\Lambda^{\ell}\mathcal{\tilde{R}}(\tau)\|_{L^2}d\tau, \label{R-E80}
\end{eqnarray}
where $c$ is some positive constant. Then multiplying the factor $2^{q(\sigma_{c}-2-\ell)}$ on both sides of
(\ref{R-E80}) and summing up the resulting inequality, we immediately
deduce that
\begin{eqnarray}
&&\|(I-\mathcal{P})\Lambda^{\ell}\tilde{w}\|_{B^{\sigma_{c}-2-\ell}_{2,1}}\nonumber\\&\lesssim& e^{-ct}\|(I-\mathcal{P})\tilde{w}_{0}\|_{B^{\sigma_{c}-2}_{2,1}}
+\int^{t}_{0}e^{-c(t-\tau)}\|\mathcal{\tilde{R}}(\tau)\|_{B^{\sigma_{c}-2}_{2,1}}d\tau. \label{R-E81}
\end{eqnarray}
It follows from Lemma \ref{lem3.2} that
$\|\mathcal{\tilde{R}}(\tau)\|_{B^{\sigma_{c}-2}_{2,1}}=\|\mathcal{\tilde{R}}(\tau)\|_{\dot{B}^{\sigma_{c}-2}_{2,1}}+\|\mathcal{\tilde{R}}(\tau)\|_{L^2}$,
since $\sigma_{c}-2>0$. Using Proposition \ref{prop3.1} and Lemma \ref{lem3.1}, we have
\begin{eqnarray}
\|\mathcal{\tilde{R}}_{1}(\tau)\|_{\dot{B}^{\sigma_{c}-2}_{2,1}}&=&\|\mathcal{H}_{3}(V)\nabla V\|_{\dot{B}^{\sigma_{c}-2}_{2,1}}\nonumber\\&\lesssim&
\|\mathcal{H}_{3}(\tilde{w})\|_{\dot{B}^{\sigma_{c}-1}_{2,1}}\|\nabla \tilde{w}\|_{\dot{B}^{\sigma_{c}-2}_{2,1}}\nonumber\\&\lesssim& \|\tilde{w}\|_{B^{\sigma_{c}-1}_{2,1}}
\|\Lambda^{\ell+1} \tilde{w}\|_{B^{\sigma_{c}-1-(\ell+1)}_{2,1}}
\nonumber\\&\lesssim& (1+\tau)^{-s-\frac{\ell+1}{2}}\mathcal{E}^2_{1}(t), \label{R-E82}
\end{eqnarray}
where we used the constraint $\ell<\sigma_{c}-2$, and $\mathcal{H}_{3}(V)$ stands for $-\tilde{A}^{0}_{22}\tilde{A}^{0}_{22}(V)^{-1}\\ \tilde{A}^{j}_{21}(V)$ or $-\tilde{A}^{0}_{22}\tilde{A}^{0}_{22}(V)^{-1}\tilde{A}^{j}_{22}(V)$ in $\mathcal{\tilde{R}}_{1}$.

For $\mathcal{\tilde{R}}_{2}$, we have
\begin{eqnarray}
\|\mathcal{\tilde{R}}_{2}(\tau)\|_{\dot{B}^{\sigma_{c}-2}_{2,1}}&=&\|\mathcal{H}_{4}(\tilde{w})L_{22}V\|_{\dot{B}^{\sigma_{c}-2}_{2,1}}\nonumber\\&\lesssim&
\|\mathcal{H}_{4}(\tilde{w})\|_{\dot{B}^{\sigma_{c}-1}_{2,1}}\|(I-\mathcal{P})\tilde{w}\|_{\dot{B}^{\sigma_{c}-2}_{2,1}}\nonumber\\&\lesssim& \|\tilde{w}\|_{B^{\sigma_{c}-1}_{2,1}}
\|\Lambda^{\ell} (I-\mathcal{P})\tilde{w}\|_{B^{\sigma_{c}-2-\ell}_{2,1}}\nonumber\\&\lesssim& (1+\tau)^{-s-\frac{\ell+1}{2}}\mathcal{E}_{1}(t)\mathcal{E}_{2}(t), \label{R-E83}
\end{eqnarray}
where $\mathcal{H}_{4}(\tilde{w})=-\tilde{A}^{0}_{22}[\tilde{A}^{0}_{22}(V)^{-1}-(\tilde{A}^{0}_{22})^{-1}]$. Similarly, we can arrive at
\begin{eqnarray}
\|\mathcal{\tilde{R}}_{3}(\tau)\|_{\dot{B}^{\sigma_{c}-2}_{2,1}}\lesssim(1+\tau)^{-s-\frac{\ell+1}{2}}\mathcal{E}_{1}(t)\mathcal{E}_{2}(t). \label{R-E84}
\end{eqnarray}
Therefore, together with (\ref{R-E82})-(\ref{R-E84}), we can obtain
\begin{eqnarray}
\|\mathcal{\tilde{R}}(\tau)\|_{\dot{B}^{\sigma_{c}-2}_{2,1}}\lesssim(1+\tau)^{-s-\frac{\ell+1}{2}}\mathcal{E}^2_{1}(t)
+(1+\tau)^{-s-\frac{\ell+1}{2}}\mathcal{E}_{1}(t)\mathcal{E}_{2}(t). \label{R-E85}
\end{eqnarray}
In addition, from (\ref{R-E9}), it is not difficult to get
\begin{eqnarray}
\|\mathcal{\tilde{R}}(\tau)\|_{L^2}&\lesssim &\|\mathcal{H}_{3}(w)\|_{L^\infty}\|\nabla \tilde{w}\|_{L^2}+\|\tilde{w}\|_{L^\infty}\|(I-\mathcal{P})\tilde{w}\|_{L^2}
\nonumber\\&\lesssim& \|\Lambda^{\ell}\tilde{w}\|_{\dot{B}^{\sigma_{c}-1-\ell}_{2,1}}\Big(\|\nabla \tilde{w}\|_{B^{\sigma_{c}-2}_{2,1}}+\|(I-\mathcal{P})\tilde{w}\|_{B^{\sigma_{c}-2}_{2,1}}\Big)
\nonumber\\&\lesssim& (1+\tau)^{-s-\frac{\ell+1}{2}}\mathcal{E}^2_{1}(t)
+(1+\tau)^{-s-\frac{\ell+1}{2}}\mathcal{E}_{1}(t)\mathcal{E}_{2}(t). \label{R-E86}
\end{eqnarray}
Combining (\ref{R-E81}) and (\ref{R-E85})-(\ref{R-E86}), we conclude that (\ref{R-E755}).

In case that $\ell=\sigma_{c}-2$, by applying the operator $\dot{\Delta}_{q}\Lambda^{\sigma_{c}-2}(q\in \mathbb{Z})$ to (\ref{R-E78}) and performing the similar procedure leading to
(\ref{R-E81}), we obtain
\begin{eqnarray}
&&\|(I-\mathcal{P})\Lambda^{\sigma_{c}-2}\tilde{w}\|_{\dot{B}^{0}_{2,1}}\nonumber\\&\lesssim& e^{-ct}\|(I-\mathcal{P})\tilde{w}_{0}\|_{B^{\sigma_{c}-2}_{2,1}}
+\int^{t}_{0}e^{-c(t-\tau)}\|\mathcal{\tilde{R}}(\tau)\|_{\dot{B}^{\sigma_{c}-2}_{2,1}}d\tau. \label{R-E811}
\end{eqnarray}
Next, we revise the inequalities (\ref{R-E82})-(\ref{R-E84}) as follows:
\begin{eqnarray}
\|\mathcal{\tilde{R}}_{1}(\tau)\|_{\dot{B}^{\sigma_{c}-2}_{2,1}}&\lesssim&
\|\mathcal{H}_{3}(\tilde{w})\|_{\dot{B}^{\sigma_{c}-1}_{2,1}}\|\nabla \tilde{w}\|_{\dot{B}^{\sigma_{c}-2}_{2,1}}\nonumber\\&\lesssim& \|\tilde{w}\|_{B^{\sigma_{c}-1}_{2,1}}
\|\Lambda^{\sigma_{c}-1} \tilde{w}\|_{\dot{B}^{0}_{2,1}}
\nonumber\\&\lesssim& (1+\tau)^{-s-\frac{\sigma_{c}-1}{2}}\mathcal{E}^2_{1}(t), \label{R-E812}
\end{eqnarray}
\begin{eqnarray}
\|\mathcal{\tilde{R}}_{2}(\tau)\|_{\dot{B}^{\sigma_{c}-2}_{2,1}}&\lesssim&
\|\mathcal{H}_{4}(\tilde{w})\|_{\dot{B}^{\sigma_{c}-1}_{2,1}}\|(I-\mathcal{P})\tilde{w}\|_{\dot{B}^{\sigma_{c}-2}_{2,1}}\nonumber\\&\lesssim& \|\tilde{w}\|_{B^{\sigma_{c}-1}_{2,1}}
\|\Lambda^{\sigma_{c}-2} (I-\mathcal{P})\tilde{w}\|_{\dot{B}^{0}_{2,1}}\nonumber\\&\lesssim& (1+\tau)^{-s-\frac{\sigma_{c}-1}{2}}\mathcal{E}_{1}(t)\mathcal{E}_{2}(t), \label{R-E813}
\end{eqnarray}
and
\begin{eqnarray}
\|\mathcal{\tilde{R}}_{3}(\tau)\|_{\dot{B}^{\sigma_{c}-2}_{2,1}}\lesssim(1+\tau)^{-s-\frac{\sigma_{c}-1}{2}}\mathcal{E}_{1}(t)\mathcal{E}_{2}(t), \label{R-E844}
\end{eqnarray}
which lead to (\ref{R-E75}) directly.
\end{proof}

\noindent \textbf{\textit{The proofs of Propositions \ref{prop5.1}-\ref{prop5.2}}.}
From Lemmas \ref{lem5.2}-\ref{lem5.3}, we get
\begin{eqnarray}
\mathcal{E}_{1}(t)\lesssim E_{0}+\mathcal{E}^{2}(t)+\mathcal{E}_{0}(t)\mathcal{E}_{1}(t). \label{R-E888}
\end{eqnarray}
From Lemma \ref{lem5.4}, we get
\begin{eqnarray}
\mathcal{E}_{2}(t)\lesssim E_{0}+\mathcal{E}^{2}_{1}(t)+\mathcal{E}_{1}(t)\mathcal{E}_{2}(t). \label{R-E889}
\end{eqnarray}
The time-weighted energy inequalities (\ref{R-E888})-(\ref{R-E889}) implies (\ref{R-E54}) immediately, so
the proof of Proposition \ref{prop5.2} is finished ultimately.

Furthermore, from Theorem \ref{thm2.3}, we see that
$\mathcal{E}_{0}(t)\lesssim \|V_{0}-\bar{V}\|_{B^{\sigma_{c}}_{2,1}}\lesssim E_{0}$. Thus, if $E_{0}$ is sufficient small, it follows from (\ref{R-E54}) that
\begin{eqnarray}
\mathcal{E}(t)\lesssim E_{0}+\mathcal{E}^{2}(t), \label{R-E88}
\end{eqnarray}
which can deduce that $\mathcal{E}(t)\lesssim E_{0}$, provided that $E_{0}$ is sufficient small. Consequently, we obtain the decay
estimates in Proposition \ref{prop5.1}. $\square$

Based on Propositions \ref{prop4.1}-\ref{prop4.2} and \ref{prop5.1}, we have a analogue decay estimates on the framework of $B^{\sigma_{c}}_{2,1}\cap L^{p}(1\leq p<2)$.
\begin{prop}\label{prop5.3}
Let $\tilde{w}=V(t,x)-\bar{V}$ be the global classical solution in the sense of Theorem \ref{thm2.3}. Suppose that $\tilde{w}_{0}-\bar{w}\in B^{\sigma_{c}}_{2,1}\cap L^{p}(1\leq p<2)$ and the norm
$\widetilde{E}_{0}:=\|w_{0}-\bar{w}\|_{B^{\sigma_{c}}_{2,1}\cap L^{p}}$ is sufficiently small. Then it holds that
\begin{eqnarray}
\|\Lambda^{\ell}\tilde{w}(\tau)\|_{X_{1}}\lesssim \widetilde{E}_{0}(1+t)^{-\frac{n}{2}(\frac{1}{p}-\frac{1}{2})-\frac{\ell}{2}}\label{R-E89}
\end{eqnarray}
for $0\leq\ell\leq \sigma_{c}-1$, and
\begin{eqnarray}
\|\Lambda^{\ell}(I-\mathcal{P})\tilde{w}(\tau)\|_{X_{2}}\lesssim \widetilde{E}_{0}(1+t)^{-\frac{n}{2}(\frac{1}{p}-\frac{1}{2})-\frac{\ell+1}{2}} \label{R-E90}
\end{eqnarray}
for $0\leq\ell\leq \sigma_{c}-2$, where $X_{1}$ and $X_{2}$ are the same space notations as in Proposition \ref{prop5.1}.
\end{prop}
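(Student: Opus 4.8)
The plan is to obtain Proposition \ref{prop5.3} as a direct corollary of Proposition \ref{prop5.1}, using the embedding $L^{p}(\mathbb{R}^{n})\hookrightarrow\dot{B}^{-s}_{2,\infty}(\mathbb{R}^{n})$ of Lemma \ref{lem8.5}, exactly as the optimal linear estimate (\ref{R-E25}) was extracted from (\ref{R-E24}) in Proposition \ref{prop4.1}. The decisive observation is purely arithmetic: with the choice $s=n(\frac{1}{p}-\frac{1}{2})$ one has $\frac{s}{2}=\gamma_{p,2}$, so that the $\dot{B}^{-s}_{2,\infty}$-rate $(1+t)^{-\frac{s+\ell}{2}}$ furnished by Proposition \ref{prop5.1} is term-by-term identical to the claimed $L^{p}$-$L^{2}$ rate $(1+t)^{-\gamma_{p,2}-\frac{\ell}{2}}$.

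First I would check the admissibility of this $s$: since $1\leq p<2$ gives $0<\frac{1}{p}-\frac{1}{2}\leq\frac{1}{2}$, we get $0<s\leq n/2$, which is precisely the range required in Proposition \ref{prop5.1} (the endpoints $s=n/2$ and $s\to0^{+}$ matching $p=1$ and $p\to2^{-}$). Next, Lemma \ref{lem8.5} yields $\|\tilde{w}_{0}\|_{\dot{B}^{-s}_{2,\infty}}\lesssim\|\tilde{w}_{0}\|_{L^{p}}$, hence
$$E_{0}=\|\tilde{w}_{0}\|_{B^{\sigma_{c}}_{2,1}\cap\dot{B}^{-s}_{2,\infty}}\lesssim\|\tilde{w}_{0}\|_{B^{\sigma_{c}}_{2,1}\cap L^{p}}=\widetilde{E}_{0},$$
so that smallness of $\widetilde{E}_{0}$ forces smallness of $E_{0}$. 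All hypotheses of Proposition \ref{prop5.1} then hold, and applying it with this $s$ gives
$$\|\Lambda^{\ell}\tilde{w}(t)\|_{X_{1}}\lesssim E_{0}(1+t)^{-\frac{s+\ell}{2}}\lesssim\widetilde{E}_{0}(1+t)^{-\gamma_{p,2}-\frac{\ell}{2}},\qquad 0\leq\ell\leq\sigma_{c}-1,$$
which is (\ref{R-E89}); the orthogonal estimate (\ref{R-E90}) follows identically from the $(I-\mathcal{P})\tilde{w}$ part of Proposition \ref{prop5.1}.

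Since the reduction is this clean, I expect no serious obstacle; the one point deserving care is simply confirming that the negative-order datum norm enters the nonlinear machinery of Lemmas \ref{lem5.2}--\ref{lem5.4} only \emph{linearly}, through the initial-data terms (\ref{R-E56}) and (\ref{R-E566}), while every nonlinear source is already controlled through its $L^{p}$-norm via the improved Gagliardo--Nirenberg inequality (Lemma \ref{lem8.4}) and the time-weighted functionals $\mathcal{E}_{0},\mathcal{E}_{1},\mathcal{E}_{2}$. This guarantees that treating the data in $L^{p}$ rather than in $\dot{B}^{-s}_{2,\infty}$ changes nothing in the quadratic closure, so that Proposition \ref{prop5.1} may indeed be invoked as a black box and the Lyapunov bound $\mathcal{E}(t)\lesssim\widetilde{E}_{0}+\mathcal{E}^{2}(t)+\mathcal{E}_{0}(t)\mathcal{E}(t)$ survives verbatim with $E_{0}$ replaced by $\widetilde{E}_{0}$.
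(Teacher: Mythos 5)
Your proposal is correct and coincides with the paper's own route: the paper states Proposition \ref{prop5.3} without separate proof precisely because it follows from Proposition \ref{prop5.1} via the embedding $L^{p}(\mathbb{R}^{n})\hookrightarrow\dot{B}^{-s}_{2,\infty}(\mathbb{R}^{n})$ of Lemma \ref{lem8.5} with $s=n(\tfrac{1}{p}-\tfrac{1}{2})\in(0,n/2]$, exactly as (\ref{R-E25}) was deduced from (\ref{R-E24}). Your additional check that the data norm enters the nonlinear closure only linearly, so that $E_{0}\lesssim\widetilde{E}_{0}$ suffices to invoke Proposition \ref{prop5.1} as a black box, is the same observation implicit in the paper.
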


As a direct consequence of Propositions \ref{prop5.1}-\ref{prop5.2}, the optimal decay estimates in the usual $L^2$ space are available.
\begin{cor}\label{cor5.1}
Let $\tilde{w}=V(t,x)-\bar{V}$ be the global classical solution in the sense of Theorem \ref{thm2.3}.  \begin{itemize}
\item [(i)]  If $E_{0}$ is sufficiently small, then
\begin{eqnarray}
\|\Lambda^{\ell}\tilde{w}\|_{L^2}\lesssim E_{0}(1+t)^{-\frac{\ell+s}{2}},  \  0\leq\ell\leq\sigma_{c}-1; \label{R-E91}
\end{eqnarray}
\begin{eqnarray}
\|\Lambda^{\ell}(I-\mathcal{P})\tilde{w}\|_{L^2}\lesssim E_{0}(1+t)^{-\frac{s+\ell+1}{2}}, \  0\leq\ell\leq \sigma_{c}-2. \label{R-E92}
\end{eqnarray}

\item [(ii)] If $\widetilde{E}_{0}$ is sufficiently small, then
\begin{eqnarray}
\|\Lambda^{\ell}\tilde{w}\|_{L^2}\lesssim \widetilde{E}_{0}(1+t)^{-\frac{n}{2}(\frac{1}{p}-\frac{1}{2})-\frac{\ell}{2}},  0\leq\ell\leq\sigma_{c}-1; \label{R-E93}
\end{eqnarray}
\begin{eqnarray}
\|\Lambda^{\ell}(I-\mathcal{P})\tilde{w}\|_{L^2}\lesssim \widetilde{E}_{0}(1+t)^{-\frac{n}{2}(\frac{1}{p}-\frac{1}{2})-\frac{\ell+1}{2}},\  0\leq\ell\leq \sigma_{c}-2. \label{R-E94}
\end{eqnarray}
\end{itemize}
\end{cor}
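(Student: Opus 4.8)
The plan is to obtain Corollary \ref{cor5.1} as an immediate consequence of the Besov-space decay estimates already established in Propositions \ref{prop5.1} and \ref{prop5.3}, simply by passing from the sharp Besov norms to the $L^2$ norm through the zero-order embeddings collected in Lemma \ref{lem3.2}. No new analysis is required; the only point to keep track of is the interior/endpoint dichotomy built into the target spaces $X_1$ and $X_2$, namely the inhomogeneous space $B^{\sigma_c-1-\ell}_{2,1}$ (resp. $B^{\sigma_c-2-\ell}_{2,1}$) of \emph{positive} order in the interior range and the homogeneous space $\dot B^0_{2,1}$ at the endpoint $\ell=\sigma_c-1$ (resp. $\ell=\sigma_c-2$).

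For part (i), first I would treat the interior range $0\le\ell<\sigma_c-1$, where $\sigma_c-1-\ell>0$. By Lemma \ref{lem3.2}(2) one has $B^{\sigma_c-1-\ell}_{2,1}=L^2\cap\dot B^{\sigma_c-1-\ell}_{2,1}\hookrightarrow L^2$, whence $\|\Lambda^\ell\tilde w\|_{L^2}\lesssim\|\Lambda^\ell\tilde w\|_{X_1}$; inserting (\ref{R-E52}) gives (\ref{R-E91}). At the endpoint $\ell=\sigma_c-1$ the relevant space is $X_1=\dot B^0_{2,1}$, and Lemma \ref{lem3.2}(1) supplies $\dot B^0_{2,1}\hookrightarrow L^2$, so that again $\|\Lambda^{\sigma_c-1}\tilde w\|_{L^2}\lesssim\|\Lambda^{\sigma_c-1}\tilde w\|_{X_1}$ and (\ref{R-E52}) yields (\ref{R-E91}) at this value of $\ell$ as well. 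The estimate (\ref{R-E92}) for the orthogonal part $(I-\mathcal P)\tilde w$ is proved word for word, with $X_1$ replaced by $X_2$, the range shifted to $0\le\ell\le\sigma_c-2$, and the decay input taken from (\ref{R-E53}).

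Part (ii) is entirely parallel: applying the very same two embeddings to the decay estimates (\ref{R-E89})-(\ref{R-E90}) of Proposition \ref{prop5.3} produces (\ref{R-E93})-(\ref{R-E94}), with the heat-kernel rate $\gamma_{p,2}=\frac n2(\frac1p-\frac12)$ carried over unchanged. I do not expect any genuine obstacle here, precisely because all the substantive work—the frequency-localized Duhamel formula, the time-weighted energy functionals, and the low/high-frequency splitting—has already been carried out in the propositions; the corollary is merely their reformulation in the customary $L^2$ framework. The only care needed is the routine bookkeeping of matching each value of $\ell$ to the correct zero-order embedding, positive-order inhomogeneous in the interior and homogeneous at the endpoint.
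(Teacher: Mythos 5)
Your proposal is correct and matches the paper's own treatment: the paper states Corollary \ref{cor5.1} as a direct consequence of Propositions \ref{prop5.1} and \ref{prop5.3}, which is exactly the argument you spell out via the embeddings $B^{\sigma_c-1-\ell}_{2,1}\hookrightarrow L^2$ (positive order, Lemma \ref{lem3.2}(2)) and $\dot B^{0}_{2,1}\hookrightarrow L^2$ (Lemma \ref{lem3.2}(1)) at the endpoint. Your bookkeeping of the interior/endpoint dichotomy for $X_1$ and $X_2$ and the correct attribution of part (ii) to Proposition \ref{prop5.3} are exactly what the paper leaves implicit.
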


Moreover, the optimal $L^{p}$-$L^{q}$ decay rates for solutions are shown by Corollary \ref{cor5.1} and Lemma \ref{lem8.4}.

\begin{cor}\label{cor5.2}
Suppose that $\tilde{w}_{0}\in B^{\sigma}_{2,1}\cap L^{p}(1\leq p<2)$ and $\widetilde{E}_{0}$ is sufficiently small. Then the solution and various derivatives decay in the $L^{q}$ norm:
\begin{eqnarray}
\|\Lambda^{k}\tilde{w}\|_{L^{q}}\lesssim \widetilde{E}_{0} (1+t)^{-\gamma_{p,q}-\frac{k}{2}} \label{R-E95}
\end{eqnarray}
 for $2\leq q\leq\infty$ and $0\leq k\leq\sigma_{c}-1-2\gamma_{2,q}$,
and
\begin{eqnarray}
\|\Lambda^{k}(I-\mathcal{P})\tilde{w}\|_{L^{q}}\lesssim \widetilde{E}_{0} (1+t)^{-\gamma_{p,q}-\frac{k+1}{2}}, \label{R-E96}
\end{eqnarray}
for $2\leq q\leq n$ and $0\leq k\leq\sigma_{c}-2-2\gamma_{2,q}$, where $\gamma_{p,q}=\frac{n}{2}(\frac{1}{p}-\frac{1}{q})$ is the $L^p$-$L^q$ decay rate of heat kernel.
\end{cor}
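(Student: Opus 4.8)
The plan is to upgrade the $L^{2}$ decay rates of Corollary~\ref{cor5.1}(ii) to $L^{q}$ rates by interpolation, exploiting the elementary identity
$$\gamma_{p,2}+\gamma_{2,q}=\tfrac{n}{2}\Big(\tfrac1p-\tfrac12\Big)+\tfrac{n}{2}\Big(\tfrac12-\tfrac1q\Big)=\gamma_{p,q},$$
together with the abbreviation $2\gamma_{2,q}=n(\tfrac12-\tfrac1q)$ for the Sobolev shift. The whole mechanism is that interpolation converts the $L^{2}$ decay into an $L^{q}$ decay while the exponents telescope through this identity.

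First I would treat the range $2\le q<\infty$. Applying the improved Gagliardo--Nirenberg--Sobolev inequality (Lemma~\ref{lem8.4}) with $r=2$ gives
\begin{equation*}
\|\Lambda^{k}\tilde{w}\|_{L^{q}}\lesssim \|\Lambda^{k_{1}}\tilde{w}\|_{L^{2}}^{1-\theta}\,\|\Lambda^{k_{2}}\tilde{w}\|_{L^{2}}^{\theta},
\end{equation*}
where the scaling relation forces $(1-\theta)k_{1}+\theta k_{2}=k+2\gamma_{2,q}$. Under the hypothesis $0\le k\le\sigma_{c}-1-2\gamma_{2,q}$ one has $k+2\gamma_{2,q}\le\sigma_{c}-1$, so the interpolation orders $k_{1},k_{2}$ can be chosen inside $[0,\sigma_{c}-1]$ and the $L^{2}$ estimate (\ref{R-E93}) of Corollary~\ref{cor5.1}(ii) applies to each factor. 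Substituting it yields
\begin{align*}
\|\Lambda^{k}\tilde{w}\|_{L^{q}}
&\lesssim \big[\widetilde{E}_{0}(1+t)^{-\gamma_{p,2}-k_{1}/2}\big]^{1-\theta}\big[\widetilde{E}_{0}(1+t)^{-\gamma_{p,2}-k_{2}/2}\big]^{\theta}\\
&=\widetilde{E}_{0}(1+t)^{-\gamma_{p,2}-\frac12[(1-\theta)k_{1}+\theta k_{2}]}
=\widetilde{E}_{0}(1+t)^{-\gamma_{p,2}-\gamma_{2,q}-k/2},
\end{align*}
which by the identity above is exactly (\ref{R-E95}). For the endpoint $q=\infty$, where necessarily $k=0$, the embedding $\dot{H}^{n/2}\hookrightarrow L^{\infty}$ fails, so instead I would invoke $\dot{B}^{n/2}_{2,1}\hookrightarrow L^{\infty}$ from Lemma~\ref{lem3.2}(5) together with the homogeneous Besov decay $\|\Lambda^{\sigma_{c}-1}\tilde{w}\|_{\dot{B}^{0}_{2,1}}\lesssim\widetilde{E}_{0}(1+t)^{-\gamma_{p,2}-(\sigma_{c}-1)/2}$ from Proposition~\ref{prop5.3}; since $\sigma_{c}-1=n/2$ this gives $\|\tilde{w}\|_{L^{\infty}}\lesssim\|\tilde{w}\|_{\dot{B}^{n/2}_{2,1}}\lesssim\widetilde{E}_{0}(1+t)^{-\gamma_{p,2}-n/4}=\widetilde{E}_{0}(1+t)^{-\gamma_{p,\infty}}$.

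The orthogonal part (\ref{R-E96}) is handled by the same interpolation, now feeding in the $L^{2}$ estimate (\ref{R-E94}) of Corollary~\ref{cor5.1}(ii), which carries the extra half-power $(1+t)^{-1/2}$; since the affine shift by $+1$ in the exponent passes unchanged through the convex combination, one obtains the rate $-\gamma_{p,q}-(k+1)/2$. Here the admissible derivative range is $[0,\sigma_{c}-2]$, so the requirement $k+2\gamma_{2,q}\le\sigma_{c}-2$ gives precisely $0\le k\le\sigma_{c}-2-2\gamma_{2,q}$, and at $q=n$ one computes $2\gamma_{2,n}=\tfrac{n}{2}-1=\sigma_{c}-2$, forcing $k=0$; for $q>n$ no admissible $k$ survives, which is the reason for the restriction $2\le q\le n$.

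The hard part will be purely bookkeeping: choosing the interpolation indices $k_{1},k_{2}$ so that both $L^{2}$ factors remain within the admissible derivative window, and dispatching the two boundary cases, namely $q=\infty$ (which must go through the Besov embedding rather than Sobolev) and $q=n$ for $(I-\mathcal{P})\tilde{w}$, where the available regularity $\sigma_{c}-2$ is exactly exhausted.
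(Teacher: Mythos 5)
Your proposal is correct and follows essentially the same route as the paper: apply the improved Gagliardo--Nirenberg--Sobolev inequality (Lemma \ref{lem8.4}) with $r=2$ to interpolate $\|\Lambda^{k}\tilde{w}\|_{L^{q}}$ between two $L^{2}$ norms of fractional derivatives, feed in the rates of Corollary \ref{cor5.1}(ii), and let the exponents telescope through $\gamma_{p,2}+\gamma_{2,q}=\gamma_{p,q}$, with the derivative window $[0,\sigma_{c}-1]$ (resp.\ $[0,\sigma_{c}-2]$) producing exactly the stated ranges of $k$ and $q$; the orthogonal part is handled identically, the $+1$ shift passing through the convex combination.

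The one place you genuinely diverge is the endpoint $q=\infty$, and there your treatment is in fact more careful than the paper's. The paper simply invokes Lemma \ref{lem8.4} over the full range $0\leq\theta\leq1$, $2\leq q\leq\infty$; but at $q=\infty$, $k=0$ the admissible parameters degenerate to $\theta=1$, $\varrho=\sigma_{c}-1=n/2$, where the inequality reduces to the false critical embedding $\dot{H}^{n/2}\hookrightarrow L^{\infty}$ (and the proof of Lemma \ref{lem8.4} via Lemma \ref{lem8.3} anyway requires $0<\theta<1$). Your detour through $\dot{B}^{n/2}_{2,1}\hookrightarrow L^{\infty}$ (Lemma \ref{lem3.2}(5)) combined with the homogeneous Besov decay $\|\Lambda^{\sigma_{c}-1}\tilde{w}\|_{\dot{B}^{0}_{2,1}}\lesssim\widetilde{E}_{0}(1+t)^{-\gamma_{p,2}-(\sigma_{c}-1)/2}$ from Proposition \ref{prop5.3} is precisely what is needed to make this endpoint rigorous, since $\gamma_{p,2}+n/4=\gamma_{p,\infty}$. (For $2\leq q<\infty$ the boundary case $\theta=1$ is harmless, as it reduces to the valid Sobolev embedding $\dot{H}^{n(1/2-1/q)}\hookrightarrow L^{q}$, so the rest of your bookkeeping, including $2\gamma_{2,n}=\sigma_{c}-2$ forcing $k=0$ at $q=n$, goes through.)
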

\begin{proof}
From Lemma \ref{lem8.4} (taking $r=2$), without loss of generality suppose that $m<\varrho$, then we have
\begin{eqnarray}
\|\Lambda^{k}\tilde{w}\|_{L^{q}(R^{n})} &\lesssim &\|\Lambda^{m}f\|_{L^2}^{1-\theta}\|\Lambda^{\varrho}w\|^{\theta}_{L^2(R^{n})}
\nonumber\\& \lesssim & \Big\{(1+t)^{-\frac{n}{2}(\frac{1}{p}-\frac{1}{2})-\frac{m}{2}}\Big\}^{1-\theta}\Big\{(1+t)^{-\frac{n}{2}(\frac{1}{p}-\frac{1}{2})-\frac{\varrho}{2}}\Big\}^{\theta}
\nonumber\\&=& (1+t)^{-\frac{n}{2}(\frac{1}{p}-\frac{1}{2})-\frac{m}{2}(1-\theta)-\frac{\varrho}{2}\theta}
\nonumber\\&=& (1+t)^{-\frac{n}{2}(\frac{1}{p}-\frac{1}{2})-\frac{k}{2}-\frac{n}{2}(\frac{1}{2}-\frac{1}{q})}
\nonumber\\&=& (1+t)^{-\frac{n}{2}(\frac{1}{p}-\frac{1}{q})-\frac{k}{2}} \label{R-E97}
\end{eqnarray}
for $k\geq0$, where we have used the relation $k+n(\frac{1}{2}-\frac{1}{q})=m(1-\theta)+\varrho\theta$.
Finally, it follows from $0\leq\theta\leq1$  and $0\leq m<\varrho\leq \sigma_{c}-1$ that $$0\leq k\leq\sigma_{c}-1-n\Big(\frac{1}{2}-\frac{1}{q}\Big).$$
Similarly, the corresponding decay rates for $(I-\mathcal{P})\tilde{w}$ are also available.
\end{proof}

\section{Applications}\setcounter{equation}{0}\label{sec:6}
Our decay results for generally dissipative systems have a great
potential for applications, since there are many concrete models satisfy the entropy assumption and [SK] condition.
In this section, we present an application to the gas dynamics with relaxation, say, the damped compressible Euler equations, which
are given by
\begin{equation}
\left\{
\begin{array}{l}
\partial_{t}\rho + \nabla\cdot(\rho\textbf{u}) = 0 , \\
\partial_{t}(\rho\textbf{u}) +\nabla\cdot(\rho\textbf{u}\otimes\textbf{u}) +
\nabla p(\rho) =-\rho\textbf{u}.
\end{array} \right.\label{R-E98}
\end{equation}
Here $\rho = \rho(t, x)$ is the fluid density function of
$(t,x)\in[0,+\infty)\times\mathbb{R}^{3}$;
$\textbf{u}=\textbf{u}(t, x)=(u^1,u^2,u^{3})^{\top}$ denotes the
fluid velocity. The pressure $P$ is related to the density by
$p(\rho)$, which satisfies the classical assumption
$$p'(\rho)>0,\ \ \ \forall\rho>0.$$
An usual simplicity $p(\rho):=\rho^{\gamma}(\gamma\geq 1)$, where
the adiabatic exponent $\gamma>1$ corresponds to the isentropic flow
and $\gamma=1$ corresponds to the isothermal flow, see for example \cite{STW}.

In the present paper, we investigate the Cauchy problem of 3D compressible
Euler equations (\ref{R-E98}) with the initial condition:
\begin{equation}
(\rho,\textbf{u})(0,x)=(\rho_{0},\textbf{u}_{0}).\label{R-E99}
\end{equation}

We are interested in the damping effect on the regularity and large-time
behavior of classical solutions of (\ref{R-E98})-(\ref{R-E99}).
For the one-dimensional Euler equations with damping, the global existence of a smooth
solution with small data was obtained first by Nishida \cite{N2}. The large-time
behavior was shown by many papers, see, e.g., the excellent survey paper by Dafermos
\cite{D1}, the book by Hsiao \cite{H} and references therein. Here, we consider
the multi-dimensional case. It has been shown by \cite{STW} and \cite{WY} that
the damping term could prevent the development singularities and the Cauchy problem
(\ref{R-E98})-(\ref{R-E99}) has a unique classical solution which decays in the $L^2$-norm
to the constant background state at the rate of $(1+t)^{-3/4}$ in \cite{STW}, and in the
$L^{p}(1<p\leq\infty)$-norm at the rate of $(1+t)^{-n/2(1-1/p)}$ in \cite{WY}, respectively.

Recently, Tan and Wu \cite{TW} performed the spectral analysis to improve the above decay rates such that
the density converges to its equilibrium state at the rates $(1+t)^{-\frac{3}{4}-\frac{s}{2}}$ in the $L^2$-norm,
and the momentum of the system decays at the rates $(1+t)^{-\frac{5}{4}-\frac{s}{2}}$ in the $L^2$-norm, as the initial data
$(\rho_{0},\textbf{u}_{0})\in H^{l}\cap \dot{B}^{-s}_{1,\infty}(l\geq4,\ s\in [0,1])$. At this stage, we give the optimal
decay rates on the framework of spatially critical Besov spaces $B^{\sigma_{c}}_{2,1}\cap \dot{B}^{-s}_{2,\infty}(\sigma_{c}=5/2,\ s\in (0,3/2])$.

First, let us mention that (\ref{R-E98}) is a class of dissipative hyperbolic equations which satisfies the entropy assumption and [SK] condition,
see \cite{XK} for the rigorous verification. Therefore, we have the following global-in-time existence result in the critical space with $\sigma_{c}=5/2$.

\begin{thm}\cite{XK} \label{thm6.1}
Let $\bar{\rho}>0$ be a constant reference density. Suppose that \
$\rho_{0}-\bar{\rho}$ and $\mathbf{m}_{0}\in B^{\sigma_{c}}_{2,1}$,
there exists a positive constant $\tilde{\delta}_{0}$ such that if
$$\|(\rho_{0}-\bar{\rho},\mathbf{m}_{0})\|_{B^{\sigma_{c}}_{2,1}}\leq
\tilde{\delta}_{0}$$ with $\mathbf{m}_{0}=\rho_{0}\mathbf{u}_{0}$,
then the Cauchy problem (\ref{R-E98})-(\ref{R-E99}) has a unique
global solution $(\rho,\mathbf{m})\in \mathcal{C}^{1}(\mathbb{R}^{+}\times \mathbb{R}^{3})$ satisfying
\begin{eqnarray*}
(\rho-\bar{\rho},\mathbf{m}) \in
\widetilde{\mathcal{C}}(B^{\sigma_{c}}_{2,1})\cap
\widetilde{\mathcal{C}}^1(B^{\sigma_{c}-1}_{2,1}).
\end{eqnarray*}
Moreover, there exist two positive constants $\tilde{C}_{0}$ and $\tilde{\mu}_{0}$ such that the following energy inequality holds
\begin{eqnarray}
&&\|(\rho-\bar{\rho},\mathbf{m})\|_{\widetilde{L}^\infty(B^{\sigma_{c}}_{2,1})}
+\tilde{\mu}_{0}\Big(\|\mathbf{m}\|_{\widetilde{L}^2(B^{\sigma_{c}}_{2,1})}
+\|(\nabla\rho,\nabla\mathbf{m})\|_{\widetilde{L}^2(B^{\sigma_{c}-1}_{2,1})}\Big)
\nonumber\\&\leq& \tilde{C}_{0}\|(\rho_{0}-\bar{\rho},
\mathbf{m}_{0})\|_{B^{\sigma_{c}}_{2,1}}. \label{R-E100}
\end{eqnarray}
\end{thm}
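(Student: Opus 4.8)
The plan is to recognize the damped compressible Euler system (\ref{R-E98}) as a concrete instance of the general balance laws (\ref{R-E1}) and then to invoke Theorem \ref{thm2.3} directly; the only genuine work is to verify the two structural hypotheses — the existence of an entropy in the sense of Definition \ref{defn2.1} and the [SK] stability condition of Definition \ref{defn2.4} — after which both the global solvability and the energy inequality (\ref{R-E100}) are obtained by specializing the abstract output (\ref{R-E12}). First I would pass to the conservative variables $U=(\rho,\mathbf{m})^\top$ with $\mathbf{m}=\rho\mathbf{u}$, so that (\ref{R-E98}) takes the form (\ref{R-E1}) with fluxes $F^{j}(U)$ built from $\mathbf{m}\otimes\mathbf{m}/\rho$ and $p(\rho)$, and with source $G(U)=(0,-\mathbf{m})^\top$. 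This exhibits the partial-dissipation structure: the source acts only on the momentum block, so $\mathcal{M}=\{(\psi_{1},0)\}$ is the one-dimensional density subspace ($N_{1}=1$), $\mathcal{P}$ is the projection onto it, and $(I-\mathcal{P})V$ corresponds precisely to the momentum $\mathbf{m}$.

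Next I would produce the entropy explicitly. Introducing the internal energy $P(\rho)$ determined by $P''(\rho)=p'(\rho)/\rho$, I set $\eta(U)=|\mathbf{m}|^{2}/(2\rho)+P(\rho)$, the total mechanical energy. The classical hypothesis $p'(\rho)>0$ makes $D^{2}_{U}\eta$ positive definite on $\{\rho>0\}$ (a one-line Schur-complement check, the complement being $P''=p'/\rho$), which gives the strict convexity; the symmetry of $D_{U}F^{j}(D^{2}_{U}\eta)^{-1}$ is the standard symmetrizability of compressible Euler through the physical energy. Since $W=D_{U}\eta=(P'(\rho)-|\mathbf{u}|^{2}/2,\ \mathbf{u})$ and $\mathcal{E}=\{\mathbf{m}=0\}$, the third item $(D_{U}\eta)^\top\in\mathcal{M}$ is equivalent to $\mathbf{u}=0$, i.e. $U\in\mathcal{E}$, as required. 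The last item need only be checked at equilibrium, where $\mathbf{m}=0$ forces the off-diagonal blocks of $D^{2}_{U}\eta$ to vanish, so that $D_{U}G(D^{2}_{U}\eta)^{-1}$ reduces to the block-diagonal matrix $\mathrm{diag}(0,-\rho I_{3})$, which is symmetric, nonpositive definite, and has null space exactly $\mathcal{M}$.

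The hard part will be the [SK] condition, and I would verify it through the characterization of Theorem \ref{thm2.2}. Linearizing at the constant state $(\bar\rho,0)$ and passing to the Fourier symbol (\ref{R-E11}), one must show that no nonzero $\phi\in\mathcal{M}$ (a pure density mode, annihilated by $L$) can simultaneously satisfy $\lambda\tilde{A}^{0}\phi+\tilde{A}(\omega)\phi=0$ for some $(\lambda,\omega)$. For Euler this is exactly the genuine acoustic coupling: the operator $\tilde{A}(\omega)$ sends a density perturbation into the momentum block via the pressure gradient $p'(\bar\rho)\nabla\rho$, so a pure density direction cannot be an eigenvector remaining inside $\mathcal{M}$ unless it is zero. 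Equivalently, one can exhibit the skew-symmetric compensating matrix $\tilde{K}(\omega)$ of Theorem \ref{thm2.2} making $[\tilde{K}(\omega)\tilde{A}(\omega)]'+L$ positive definite; for this two-by-two-block acoustic system $\tilde{K}(\omega)$ is elementary to write down, coupling the density and momentum components across $\omega$.

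Once the entropy and the [SK] condition are in hand, Theorem \ref{thm2.3} applies verbatim with $n=3$ and $\sigma_{c}=5/2$, yielding the unique global solution in $\widetilde{\mathcal{C}}(B^{\sigma_{c}}_{2,1})\cap\widetilde{\mathcal{C}}^{1}(B^{\sigma_{c}-1}_{2,1})$ together with the abstract estimate (\ref{R-E12}). Reading (\ref{R-E12}) in the variables $(\rho,\mathbf{m})$ under the identification $\mathcal{M}=\{\text{density}\}$ and $\mathcal{M}^{\perp}=\{\text{momentum}\}$ established above, the term $\|(I-\mathcal{P})V\|_{\widetilde{L}^{2}(B^{\sigma_{c}}_{2,1})}$ becomes $\|\mathbf{m}\|_{\widetilde{L}^{2}(B^{\sigma_{c}}_{2,1})}$ and $\|\nabla V\|_{\widetilde{L}^{2}(B^{\sigma_{c}-1}_{2,1})}$ becomes $\|(\nabla\rho,\nabla\mathbf{m})\|_{\widetilde{L}^{2}(B^{\sigma_{c}-1}_{2,1})}$, which is precisely the dissipation structure recorded in (\ref{R-E100}). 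Thus the whole statement follows from the general machinery, the sole model-specific inputs being the energy entropy and the acoustic [SK] verification.
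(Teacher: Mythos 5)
Your proposal is correct and takes essentially the same route as the paper: the paper gives no independent proof of Theorem \ref{thm6.1}, but simply notes that (\ref{R-E98}) satisfies the entropy assumption and the [SK] condition (citing \cite{XK} for the rigorous verification) and then specializes Theorem \ref{thm2.3} with $n=3$, $\sigma_{c}=5/2$. Your explicit checks --- the mechanical-energy entropy $\eta=|\mathbf{m}|^{2}/(2\rho)+P(\rho)$ with the Schur-complement convexity argument, the equilibrium computation $D_{U}G(D^{2}_{U}\eta)^{-1}=\mathrm{diag}(0,-\rho I_{3})$, and the acoustic-coupling verification of [SK] --- are exactly the model-specific inputs that the cited reference supplies.
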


Based on Theorem \ref{thm6.1}, those decay results for general dissipative systems can be applied to (\ref{R-E98})-(\ref{R-E99}), and $X_{1}$ and $X_{2}$ are the same space notations with $\sigma_{c}=5/2$.
Precisely,
\begin{thm}\label{thm6.2}
Let $(\rho, \mathbf{m})(t,x)$ be the global classical solutions of Theorem \ref{thm6.1}. If further the initial data $(\rho_{0}-\bar{\rho},
\mathbf{m}_{0})\in \dot{B}^{-s}_{2,\infty}(0<s\leq 3/2)$ and
$$E_{0}:=\|(\rho_{0}-\bar{\rho}, \mathbf{m}_{0})\|_{B^{\sigma_{c}}_{2,1}\cap\dot{B}^{-s}_{2,\infty}}$$
is sufficiently small. Then the classical solutions $(\rho, \mathbf{m})(t,x)$  satisfies the decay estimates
\begin{eqnarray}
\|\Lambda^{\ell}(\rho-\bar{\rho},\mathbf{m})\|_{X_{1}}\lesssim E_{0}(1+t)^{-\frac{s+\ell}{2}} \label{R-E101}
\end{eqnarray}
for $0\leq \ell\leq \sigma_{c}-1$
and
\begin{eqnarray}
\|\Lambda^{\ell}\mathbf{m}\|_{X_{2}}\lesssim E_{0}(1+t)^{-\frac{s+\ell+1}{2}} \label{R-E102}
\end{eqnarray}
for $0\leq \ell\leq \sigma_{c}-2$.
\end{thm}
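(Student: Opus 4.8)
The plan is to recognize Theorem \ref{thm6.2} as a direct specialization of the abstract decay theory, so that no fresh analysis is needed beyond checking that the damped Euler system (\ref{R-E98}) genuinely belongs to the class of generally dissipative systems covered by Theorem \ref{thm2.4}. First I would cast (\ref{R-E98}) in the conservative form $U_{t}+\sum_{j}F^{j}(U)_{x_{j}}=G(U)$ with $U=(\rho,\mathbf{m})^{\top}$, $\mathbf{m}=\rho\mathbf{u}$, and source $G(U)=(0,-\mathbf{m})^{\top}$, since the damping acts only on the momentum. The physical energy furnishes a strictly convex entropy $\eta$ in the sense of Definition \ref{defn2.1}, and the linearized operator satisfies the [SK] stability condition of Definition \ref{defn2.4}; both facts are verified in \cite{XK}, so I would simply invoke them. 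Consequently the symmetrization and normal-form reduction of Section \ref{sec:2} apply verbatim, and Theorem \ref{thm6.1} supplies the global classical solution together with its uniform energy bound, which is exactly the solution referenced in the hypothesis of Theorem \ref{thm2.4}.

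The second step is to pin down the geometry of the dissipative splitting. Since $G(U)=(0,-\mathbf{m})^{\top}$ annihilates precisely the density direction, the space of collision invariants is $\mathcal{M}=\{(\psi_{1},0,0,0)^{\top}:\psi_{1}\in\mathbb{R}\}$, so that $N_{1}=1$, $N_{2}=3$, and the orthogonal projection $I-\mathcal{P}$ onto $\mathcal{M}^{\perp}$ extracts exactly the momentum component $\mathbf{m}$. The equilibrium is $\bar{U}=(\bar{\rho},0)^{\top}$, whence $U-\bar{U}=(\rho-\bar{\rho},\mathbf{m})$ and $(I-\mathcal{P})U=\mathbf{m}$. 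With $n=3$ one has $\sigma_{c}=1+n/2=5/2$, and the admissible range $0<s\leq n/2=3/2$ coincides with the hypothesis of Theorem \ref{thm6.2}.

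Having matched all the structures, I would apply Theorem \ref{thm2.4} directly. Estimate (\ref{R-E13}) applied to $U-\bar{U}=(\rho-\bar{\rho},\mathbf{m})$ yields (\ref{R-E101}) with rate $(1+t)^{-(s+\ell)/2}$ for $0\leq\ell\leq\sigma_{c}-1$, while estimate (\ref{R-E14}) applied to the orthogonal part $(I-\mathcal{P})U=\mathbf{m}$ yields (\ref{R-E102}) with the faster rate $(1+t)^{-(s+\ell+1)/2}$ for $0\leq\ell\leq\sigma_{c}-2$; the spaces $X_{1}$ and $X_{2}$ are those already fixed in Theorem \ref{thm2.4}, now evaluated at $\sigma_{c}=5/2$. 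The required smallness of $E_{0}=\|(\rho_{0}-\bar{\rho},\mathbf{m}_{0})\|_{B^{\sigma_{c}}_{2,1}\cap\dot{B}^{-s}_{2,\infty}}$ is inherited verbatim.

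The only genuinely non-routine point, and the step I expect to be the main obstacle, is the structural verification that the physical energy satisfies all four clauses of Definition \ref{defn2.1} and that the linearized form obeys the [SK] condition, in particular that the null space of the dissipation matrix $L$ coincides with $\mathcal{M}$ and that no $\phi\in\mathcal{M}$ admits $(\lambda\tilde{A}^{0}+\tilde{A}(\omega))\phi=0$ for real $(\lambda,\omega)$. Since this verification is carried out in \cite{XK} and the degeneracy matrix $L$ here is symmetric, the abstract machinery applies without modification, and everything that remains is the bookkeeping translation of the abstract estimates into the variables $(\rho,\mathbf{m})$.
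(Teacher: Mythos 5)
Your proposal is correct and matches the paper's own treatment: the paper likewise verifies (by citation to \cite{XK}) that the damped Euler system satisfies the entropy and [SK] assumptions, notes $\sigma_{c}=5/2$, identifies $(I-\mathcal{P})U$ with the momentum $\mathbf{m}$, and then invokes the general decay result of Theorem \ref{thm2.4} together with the global existence from Theorem \ref{thm6.1}. Your additional bookkeeping (explicitly computing $\mathcal{M}$, $N_{1}=1$, $N_{2}=3$, and the projection) only makes explicit what the paper leaves implicit.
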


Of course, we have also a analogue decay estimates
on the framework of $B^{\sigma_{c}}_{2,1}\cap L^p$.

\begin{thm}\label{thm6.3}
Let $(\rho, \mathbf{m})(t,x)$ be the global classical solutions of Theorem \ref{thm6.1}. If further the initial data $(\rho_{0}-\bar{\rho},
\mathbf{m}_{0})\in L^p(1\leq p<2)$ and
$$\widetilde{E}_{0}:=\|(\rho_{0}-\bar{\rho},
\mathbf{m}_{0})\|_{B^{\sigma_{c}}_{2,1}\cap L^p}$$
is sufficiently small. Then the classical solutions $(\rho,\mathbf{m})$  satisfies the following optimal decay estimates
\begin{eqnarray}
\|\Lambda^{\ell}(\rho-\bar{\rho},\mathbf{m})\|_{X_{1}}\lesssim \tilde{E}_{0}(1+t)^{-\frac{3}{2}(\frac{1}{p}-\frac{1}{2})-\frac{\ell}{2}} \label{R-E103}
\end{eqnarray}
for $0\leq \ell\leq \sigma_{c}-1$
and
\begin{eqnarray}
\|\Lambda^{\ell}\mathbf{m}\|_{X_{2}}\lesssim \tilde{E}_{0}(1+t)^{-\frac{n}{2}(\frac{1}{p}-\frac{1}{2})-\frac{\ell+1}{2}} \label{R-E104}
\end{eqnarray}
for $0\leq \ell\leq \sigma_{c}-2$.
\end{thm}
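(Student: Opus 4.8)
The plan is to recognize Theorem \ref{thm6.3} as a direct specialization of the abstract $L^p$ decay theory (Theorem \ref{thm2.5}, equivalently Proposition \ref{prop5.3}) to the concrete system (\ref{R-E98}), so that essentially all that remains is to verify the structural hypotheses and to translate the abstract objects $\mathcal{M}$ and $(I-\mathcal{P})U$ into the physical unknowns $(\rho,\mathbf{m})$. As already recorded in Theorem \ref{thm6.1} (and verified in \cite{XK}), the damped Euler system written in the conservative variable $U=(\rho,\mathbf{m})$ with $\mathbf{m}=\rho\mathbf{u}$ admits an entropy in the sense of Definition \ref{defn2.1}, and its symmetrized form satisfies the [SK] condition of Definition \ref{defn2.4} at the equilibrium $\bar{U}=(\bar\rho,0)$. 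Hence the global solution furnished by Theorem \ref{thm6.1} is exactly of the type treated in Section \ref{sec:5}, with $n=3$ and therefore $\sigma_c=5/2$, and the smallness of $\widetilde{E}_0=\|(\rho_0-\bar\rho,\mathbf{m}_0)\|_{B^{\sigma_c}_{2,1}\cap L^p}$ is precisely the hypothesis required there.

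The decisive structural step I would carry out is the identification of $\mathcal{M}$. Since the source term is $G(U)=(0,-\mathbf{m})^{\top}$, a vector $\psi=(\psi_1,\psi_2)$ satisfies $\langle\psi,G(U)\rangle=0$ for all $U$ if and only if $\psi_2=0$; thus $\mathcal{M}=\{(\psi_1,0)\}$ is the one-dimensional density direction ($N_1=1$), its orthogonal complement $\mathcal{M}^{\perp}$ is the three-dimensional momentum direction ($N_2=3$, so $N=4$), and consequently $\mathcal{P}U=(\rho,0)$ while $(I-\mathcal{P})U=(0,\mathbf{m})$. In other words the abstract dissipative part $(I-\mathcal{P})U$ coincides with the momentum $\mathbf{m}$. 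With this dictionary in hand, estimate (\ref{R-E103}) is nothing but (\ref{R-E15}) of Theorem \ref{thm2.5} applied to $U-\bar{U}=(\rho-\bar\rho,\mathbf{m})$, reading $\gamma_{p,2}=\tfrac{3}{2}(\tfrac1p-\tfrac12)$; and estimate (\ref{R-E104}) is (\ref{R-E16}) applied with $(I-\mathcal{P})U=\mathbf{m}$, which yields the extra half power of decay for the momentum. The spaces $X_1,X_2$ are those of Theorem \ref{thm2.4} evaluated at $\sigma_c=5/2$.

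The one point demanding care — and the step I expect to be the main, though mild, obstacle — is the passage between the abstract normal-form variable $V$ used in Proposition \ref{prop5.3} and the physical conservative variable $U=(\rho,\mathbf{m})$. Concretely, I would check that the symmetrizing and normal-form change of variables $U\mapsto V$ is a smooth diffeomorphism near $\bar{U}$ whose linearization at $\bar{U}$ respects the splitting $\mathbb{R}^{4}=\mathcal{M}\oplus\mathcal{M}^{\perp}$, so that the decay rates proved for $\tilde{w}=V-\bar{V}$ and for $(I-\mathcal{P})\tilde{w}$ transfer without loss to $U-\bar{U}$ and to $\mathbf{m}$. Because $W\in\mathcal{M}\iff V\in\mathcal{M}$ by Theorem \ref{thm2.1}, and because the perturbations are uniformly small in $B^{\sigma_c}_{2,1}\hookrightarrow L^{\infty}$, this transfer is governed by the composition and product estimates of Propositions \ref{prop3.1}--\ref{prop3.2}, exactly as in the derivation of Theorem \ref{thm2.5} from Proposition \ref{prop5.3}; invoking Theorem \ref{thm2.5} directly absorbs this bookkeeping. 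A final routine verification confirms that the $L^p$ norm of the initial perturbation is likewise controlled under the near-identity change of variables, so that $\widetilde{E}_0$ may be used uniformly throughout, completing the specialization.
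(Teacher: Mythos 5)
Your proposal is correct and follows exactly the paper's own route: Section \ref{sec:6} proves Theorem \ref{thm6.3} precisely by citing the verification (in \cite{XK}) of the entropy and [SK] conditions for the damped Euler system and then applying the general result Theorem \ref{thm2.5} (equivalently Proposition \ref{prop5.3}) with $n=3$, $\sigma_c=5/2$, using the identification $(I-\mathcal{P})U=(0,\mathbf{m})$. Your explicit computation of $\mathcal{M}$ and your remark on transferring estimates between the normal-form variable $V$ and the conservative variable $U$ simply spell out the bookkeeping the paper leaves implicit.
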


As a direct consequence of Theorems \ref{thm6.2}-\ref{thm6.3}, the optimal decay estimates in the $L^2$ are available.
\begin{cor} \label{cor6.1}
Let $(\rho,\mathbf{m})$ be the global classical solutions of Theorem \ref{thm6.1}.
\begin{itemize}
\item [(i)]  If $E_{0}$ is sufficiently small, then
\begin{eqnarray}
\|\Lambda^{\ell}(\rho-\bar{\rho},\mathbf{m})\|_{L^2}\lesssim E_{0}(1+t)^{-\frac{\ell+s}{2}},  \ \ \ 0\leq \ell\leq\sigma_{c}-1; \label{R-E105}
\end{eqnarray}
\begin{eqnarray}
\|\Lambda^{\ell}\mathbf{m}\|_{L^2}\lesssim  E_{0}(1+t)^{-\frac{s+\ell+1}{2}}, \ \ \ 0\leq \ell\leq \sigma_{c}-2. \label{R-E106}
\end{eqnarray}

\item [(ii)] If $\widetilde{E}_{0}$ is sufficiently small, then
\begin{eqnarray}
\|\Lambda^{\ell}(\rho-\bar{\rho},\mathbf{m})\|_{L^2}\lesssim \widetilde{E}_{0}(1+t)^{-\frac{3}{2}(\frac{1}{p}-\frac{1}{2})-\frac{\ell}{2}}, \ \ \  0\leq \ell\leq\sigma_{c}-1; \label{R-E107}
\end{eqnarray}
\begin{eqnarray}
\|\Lambda^{\ell}\mathbf{m}\|_{L^2}\lesssim \widetilde{E}_{0}(1+t)^{-\frac{3}{2}(\frac{1}{p}-\frac{1}{2})-\frac{\ell+1}{2}},\ \ \ 0\leq \ell\leq \sigma_{c}-2. \label{R-E108}
\end{eqnarray}
\end{itemize}
\end{cor}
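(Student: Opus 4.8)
The plan is to read these $L^2$ bounds off directly from the Besov-norm decay rates already proved in Theorems \ref{thm6.2} and \ref{thm6.3}, invoking only the embedding properties collected in Lemma \ref{lem3.2}. The crucial observation is that in each case the target $L^2$ norm is dominated by the space $X_1$ or $X_2$ that appears on the left-hand side of those theorems, so no fresh analysis of the system (\ref{R-E98})-(\ref{R-E99}) is needed; this is precisely what the phrase \emph{direct consequence} refers to.

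First I would establish part (i). For the range $0\leq\ell<\sigma_{c}-1$ the space is $X_{1}=B^{\sigma_{c}-1-\ell}_{2,1}$ with strictly positive regularity index $\sigma_{c}-1-\ell>0$; Lemma \ref{lem3.2}(2) states $B^{\sigma}_{2,1}=L^{2}\cap\dot{B}^{\sigma}_{2,1}$ for $\sigma>0$, which in particular gives $B^{\sigma_{c}-1-\ell}_{2,1}\hookrightarrow L^{2}$, so that
$$
\|\Lambda^{\ell}(\rho-\bar{\rho},\mathbf{m})\|_{L^{2}}\lesssim\|\Lambda^{\ell}(\rho-\bar{\rho},\mathbf{m})\|_{X_{1}}\lesssim E_{0}(1+t)^{-\frac{s+\ell}{2}}.
$$
At the endpoint $\ell=\sigma_{c}-1$ one has $X_{1}=\dot{B}^{0}_{2,1}$, and Lemma \ref{lem3.2}(1) supplies $\dot{B}^{0}_{2,1}\hookrightarrow L^{2}$, so the same chain of inequalities yields (\ref{R-E105}). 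The momentum estimate (\ref{R-E106}), corresponding to the faster-decaying dissipative component, follows identically from the second estimate of Theorem \ref{thm6.2}: for $0\leq\ell<\sigma_{c}-2$ apply $B^{\sigma_{c}-2-\ell}_{2,1}\hookrightarrow L^{2}$ (valid since $\sigma_{c}-2=\tfrac12>0$), and at $\ell=\sigma_{c}-2$ apply $\dot{B}^{0}_{2,1}\hookrightarrow L^{2}$, keeping the decay exponent $\tfrac{s+\ell+1}{2}$ unchanged.

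Part (ii) is obtained by applying the very same two embeddings, verbatim, to the estimates (\ref{R-E103})-(\ref{R-E104}) of Theorem \ref{thm6.3}, with the decay exponent $\tfrac{s+\ell}{2}$ replaced by $\tfrac{3}{2}(\tfrac1p-\tfrac12)+\tfrac{\ell}{2}$ and similarly for the momentum. I anticipate no genuine obstacle here: the whole content of the corollary is the specialization of the stronger Besov-space conclusions to the weaker $L^{2}$ conclusions via $B^{\sigma}_{2,1}\hookrightarrow L^{2}$ $(\sigma>0)$ and $\dot{B}^{0}_{2,1}\hookrightarrow L^{2}$. The only point deserving a moment's attention is the endpoint indices $\ell=\sigma_{c}-1$ and $\ell=\sigma_{c}-2$, where one must call on the \emph{homogeneous} embedding $\dot{B}^{0}_{2,1}\hookrightarrow L^{2}$ rather than an inhomogeneous one; but this is exactly the reason the spaces $X_{1},X_{2}$ were defined with the homogeneous endpoint $\dot{B}^{0}_{2,1}$ in the first place, so the passage is seamless.
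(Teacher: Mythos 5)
Your proposal is correct and matches the paper's own (implicit) argument exactly: the paper states Corollary \ref{cor6.1} as a direct consequence of Theorems \ref{thm6.2}--\ref{thm6.3}, and the content of that step is precisely the two embeddings you invoke, $B^{\sigma}_{2,1}\hookrightarrow L^{2}$ for $\sigma>0$ (Lemma \ref{lem3.2}(2)) and $\dot{B}^{0}_{2,1}\hookrightarrow L^{2}$ at the endpoint indices (Lemma \ref{lem3.2}(1)), applied to $X_{1}$ and $X_{2}$ without altering the decay exponents.
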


Finally, we have the optimal $L^{p}$-$L^{q}$ decay estimates for (\ref{R-E98})-(\ref{R-E99}).
\begin{cor}\label{cor6.2}
Suppose that $U_{0}-\bar{U}\in B^{\sigma_{c}}_{2,1}\cap L^{p}(1\leq p<2)$ and $ \widetilde{E}_{0}$ is sufficiently small. Then the solution and various derivatives decay in the $L^{q}$ norm:
\begin{eqnarray}
\|\Lambda^{k}(\rho-\bar{\rho},\mathbf{m})\|_{L^{q}}\lesssim \widetilde{E}_{0} (1+t)^{-\gamma_{p,q}-\frac{k}{2}} \label{R-E109}
\end{eqnarray}
for $2\leq q\leq\infty$ and $0\leq k\leq\sigma_{c}-1-2\gamma_{2,q}$, and
\begin{eqnarray}
\|\Lambda^{k}\mathbf{m}\|_{L^{q}}\lesssim \widetilde{E}_{0} (1+t)^{-\gamma_{p,q}-\frac{k+1}{2}} \label{R-E110}
\end{eqnarray}
for $2\leq q\leq 3$ and $0\leq k\leq\sigma_{c}-2-2\gamma_{2,q}$, where $\gamma_{p,q}:=\frac{3}{2}(\frac{1}{p}-\frac{1}{q})$.
\end{cor}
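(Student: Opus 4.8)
The plan is to obtain Corollary \ref{cor6.2} as a direct specialization of the general $L^p$-$L^q$ estimate in Corollary \ref{cor5.2} to the damped Euler system, using the dimension $n=3$ and critical index $\sigma_{c}=5/2$. Since Theorem \ref{thm6.1} confirms that (\ref{R-E98}) fits the dissipative framework (entropy plus [SK] condition), and Corollary \ref{cor6.1}(ii) already supplies the $L^2$-based decay rates, the only remaining work is an interpolation argument. The decisive identification is that the momentum $\mathbf{m}$ plays the role of the orthogonal part $(I-\mathcal{P})U$: the relaxation source acts only on the momentum equation, so $\mathbf{m}$ inherits the faster, half-order improved decay, while the full vector $(\rho-\bar{\rho},\mathbf{m})$ decays at the generic rate.

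First I would record the $L^2$ estimates from Corollary \ref{cor6.1}(ii), namely
$$\|\Lambda^{\ell}(\rho-\bar{\rho},\mathbf{m})\|_{L^2}\lesssim \widetilde{E}_{0}(1+t)^{-\gamma_{p,2}-\frac{\ell}{2}},\qquad 0\le\ell\le\sigma_{c}-1,$$
together with the corresponding faster estimate for $\|\Lambda^{\ell}\mathbf{m}\|_{L^2}$ on $0\le\ell\le\sigma_{c}-2$. I would then apply the improved Gagliardo-Nirenberg-Sobolev inequality (Lemma \ref{lem8.4}, with $r=2$) exactly as in (\ref{R-E97}): choosing exponents $m<\varrho$ in the admissible window and an interpolation parameter $\theta\in[0,1]$ subject to $k+n(\frac{1}{2}-\frac{1}{q})=m(1-\theta)+\varrho\theta$, one gets
$$\|\Lambda^{k}(\rho-\bar{\rho},\mathbf{m})\|_{L^q}\lesssim \|\Lambda^{m}(\rho-\bar{\rho},\mathbf{m})\|_{L^2}^{1-\theta}\|\Lambda^{\varrho}(\rho-\bar{\rho},\mathbf{m})\|_{L^2}^{\theta}.$$
Inserting the $L^2$ rates, the time exponent collapses to $-\gamma_{p,2}-\frac{1}{2}[m(1-\theta)+\varrho\theta]=-\gamma_{p,2}-\frac{k}{2}-\frac{3}{2}(\frac{1}{2}-\frac{1}{q})=-\gamma_{p,q}-\frac{k}{2}$, which is (\ref{R-E109}); here I use $\gamma_{p,2}+\frac{3}{2}(\frac{1}{2}-\frac{1}{q})=\gamma_{p,q}$. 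The momentum estimate (\ref{R-E110}) follows identically from the faster $L^2$ decay, the additional $+1$ in each $L^2$ exponent producing the extra $-\frac{1}{2}$ in the final rate.

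The main bookkeeping obstacle is pinning down the admissible ranges of $q$ and $k$. Requiring $0\le m<\varrho\le\sigma_{c}-1$ together with $0\le\theta\le1$ forces $0\le k\le\sigma_{c}-1-n(\frac{1}{2}-\frac{1}{q})=\sigma_{c}-1-2\gamma_{2,q}$, which is precisely the stated constraint; the borderline $q=\infty$ then corresponds to $2\gamma_{2,\infty}=\frac{3}{2}=\sigma_{c}-1$, so only $k=0$ survives there, matching the claim. For $\mathbf{m}$ the available regularity window is the shorter interval $[0,\sigma_{c}-2]$, which is why $q$ is restricted to $2\le q\le n=3$ (at $q=3$ one has $2\gamma_{2,3}=\frac{1}{2}=\sigma_{c}-2$, again leaving only $k=0$) and the upper bound on $k$ becomes $\sigma_{c}-2-2\gamma_{2,q}$. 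Care is only needed at these endpoints and in checking that the $L^{\infty}$ case is covered by the critical embedding built into Lemma \ref{lem8.4}; otherwise the argument is the verbatim specialization of Corollary \ref{cor5.2}.
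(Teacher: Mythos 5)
Your proposal is correct and follows essentially the same route as the paper: Corollary \ref{cor6.2} is obtained there precisely as the specialization of the general $L^{p}$-$L^{q}$ result (Corollary \ref{cor5.2}, whose proof is the interpolation computation (\ref{R-E97}) via Lemma \ref{lem8.4} with $r=2$ applied to the $L^{2}$ rates of Corollary \ref{cor5.1}/\ref{cor6.1}) to the damped Euler system with $n=3$, $\sigma_{c}=5/2$, and with $\mathbf{m}$ identified as the orthogonal part $(I-\mathcal{P})U$. Your bookkeeping of the admissible ranges, including the endpoint cases $q=\infty$ for $(\rho-\bar{\rho},\mathbf{m})$ and $q=3$ for $\mathbf{m}$, matches the constraints $0\leq k\leq\sigma_{c}-1-2\gamma_{2,q}$ and $0\leq k\leq\sigma_{c}-2-2\gamma_{2,q}$ exactly as in the paper.
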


\begin{rem}
It is worth noting that we first obtain the various decay rates for $(\rho, \mathbf{m})(t,x)$ and its derivatives not only on the framework of spatially critical Besov spaces, but also the derivative index can take values in some interval, which improve those optimal decay results in \cite{STW,TW,WY} heavily. For instance, from Corollaries \ref{cor6.1}-\ref{cor6.2}, we can deduce that the well-known decay rates for 3D damped compressible Euler equations:
$$\|\rho-\bar{\rho}\|_{L^2}\lesssim (1+t)^{-\frac{3}{4}}, \ \ \ \|\nabla\rho\|_{L^2}\lesssim (1+t)^{-\frac{5}{4}},$$
$$\|(\rho-\bar{\rho},\mathbf{m})\|_{L^\infty}\lesssim (1+t)^{-\frac{3}{2}}, \ \ \ \|\mathbf{m}\|_{L^2}\lesssim (1+t)^{-\frac{5}{4}}.$$
Obviously, we see that the momentum has an extra time-decay $(1+t)^{-1/2}$ in $L^2$, which is just the recent decay result of \cite{TW}.
\end{rem}

\section{Appendix A (decay framework)} \setcounter{equation}{0}\label{sec:7}
This section can be regarded as an independent one in regard to the present paper, which is the main motivation of our work,
however, we would like to supplement it for completeness. Additionally, allow us to abuse the notation for constants a little.

\subsection{The damped symmetric hyperbolic system}
First, we give the decay framework for the damped symmetric hyperbolic system, which reads as
\begin{equation}
\left\{
\begin{array}{l}
A^{0}\partial_{t}w + \sum_{j=1}^{n}A^{j}w_{x_{j}}+Lw = 0 , \\
w(0,x)=w_{0},
\end{array} \right.\label{R-E111}
\end{equation}
where $t\geq0, x=(x_{1},x_{2},\cdot\cdot\cdot,x_{n})\in \mathbb{R}^{n}$ and $w=w(t,x)$ is an $\mathbb{R}^{N}$-valued function; $L, A^{j}(j=0,1,2,\cdot\cdot\cdot,n)$ are constant matrices of order $N$.

We assume that the equation of (\ref{R-E111}) is ``symmetric hyperbolic" in the same sense as in Definition \ref{defn2.2}:
\begin{itemize}
\item[(A1)] Matrices $A^{j}(j=0,\cdot\cdot\cdot,n)$ are real symmetric and, in addition, $A^{0}$ is positive
definite. $L$ is real symmetric and nonnegative definite, and its null space coincides with $\mathcal{M}$.
\end{itemize}
Furthermore, we also assume (\ref{R-E111}) satisfies the [SK] condition as in Definition \ref{defn2.4}:
\begin{itemize}
\item[(A2)] Let $\phi\in
\mathbb{R}^{N}$ satisfies $\phi\in\mathcal{M}$ (i.e., $L\phi=0$) and
$\lambda A^{0}+A(\omega)\phi=0$ for some
$(\lambda,\omega)\in \mathbb{R}\times\mathbb{S}^{n-1}$, then
$\phi=0$.
\end{itemize}
Then the result on the decay estimates of solution to the system (\ref{R-E111}) is stated as follows.
\begin{prop}\label{prop7.1}
Let the assumptions (A1)-(A2) hold.
Suppose $w_{0}\in L^2(\mathbb{R}^{n})\\ \cap \dot{B}^{-s}_{2,\infty}(\mathbb{R}^{n})$ for $s>0$, then the solution of (\ref{R-E111}) has the decay estimate
\begin{equation}
\|w\|_{L^2(\mathbb{R}^{n})}\lesssim (1+t)^{-s/2}. \label{R-E112}
\end{equation}
In particular, suppose $w_{0}\in L^2(\mathbb{R}^{n})\cap L^p(\mathbb{R}^{n})(1\leq p<2$), one further has
\begin{equation}
\|w\|_{L^2(\mathbb{R}^{n})}\lesssim (1+t)^{-\frac{n}{2}(\frac{1}{p}-\frac{1}{2})}. \label{R-E113}
\end{equation}
\end{prop}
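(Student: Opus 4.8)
The plan is to run the pointwise energy method of Section~\ref{sec:4} directly on the Fourier transform of $w$, rather than on Littlewood--Paley blocks, and then to combine Plancherel's theorem with a low-/high-frequency splitting. First I would take the Fourier transform of (\ref{R-E111}) in $x$, which yields $A^{0}\hat{w}_{t}+(i|\xi|A(\omega)+L)\hat{w}=0$ with $\omega=\xi/|\xi|$. Following the computation (\ref{R-E27})--(\ref{R-E34}) verbatim --- which uses only that $A^{0}$ is symmetric positive definite, that $L$ is symmetric nonnegative with kernel $\mathcal{M}$ (assumption (A1)), and the compensating matrix $K(\omega)$ furnished by the equivalent [SK] characterization in Theorem~\ref{thm2.2} (assumption (A2)) --- one constructs a Lyapunov functional $E[\hat{w}]\approx|\hat{w}|^{2}$ satisfying
\[
\frac{d}{dt}E[\hat{w}]+c\,\frac{|\xi|^{2}}{1+|\xi|^{2}}\,|\hat{w}|^{2}\leq 0 .
\]
Since $E[\hat{w}]\approx|\hat{w}|^{2}$, a Gronwall argument then gives the pointwise bound $|\hat{w}(t,\xi)|\lesssim e^{-c\rho(\xi)t}|\hat{w}_{0}(\xi)|$, where I abbreviate $\rho(\xi):=|\xi|^{2}/(1+|\xi|^{2})$.

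Next I would insert this pointwise estimate into Plancherel's identity $\|w(t)\|_{L^{2}}^{2}=\int_{\mathbb{R}^{n}}|\hat{w}|^{2}\,d\xi$ and split the integral at $|\xi|=1$. On the high-frequency region $|\xi|\geq1$ one has $\rho(\xi)\geq\tfrac12$, so this contribution is bounded by $e^{-ct}\|w_{0}\|_{L^{2}}^{2}$, which is harmless. On the low-frequency region $|\xi|\leq1$ one has $\rho(\xi)\geq\tfrac12|\xi|^{2}$, and here the loss of integrability of $\dot{B}^{-s}_{2,\infty}$ forces a more careful, dyadic treatment: writing the region as the union of annuli $|\xi|\sim2^{q}$ with $q\leq0$, I would bound $\int_{|\xi|\sim2^{q}}e^{-c|\xi|^{2}t}|\hat{w}_{0}|^{2}\,d\xi\lesssim e^{-c2^{2q}t}\|\dot{\Delta}_{q}w_{0}\|_{L^{2}}^{2}$ and then use the very definition of the negative-index norm, $\|\dot{\Delta}_{q}w_{0}\|_{L^{2}}\leq 2^{qs}\|w_{0}\|_{\dot{B}^{-s}_{2,\infty}}$.

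Summing over $q\leq0$ reduces everything to the elementary estimate $\sum_{q\leq0}2^{2qs}e^{-c2^{2q}t}\lesssim(1+t)^{-s}$, valid for every $s>0$: for $t\leq1$ the sum is a convergent geometric series, while for $t\geq1$ the substitution $y=2^{2q}t$ turns the sum into $t^{-s}\sum_{y}y^{s}e^{-cy}$ over dyadic $y$, which is comparable to $t^{-s}\int_{0}^{\infty}y^{s-1}e^{-cy}\,dy<\infty$. This yields a low-frequency contribution $\lesssim(1+t)^{-s}\|w_{0}\|_{\dot{B}^{-s}_{2,\infty}}^{2}$; together with the exponentially small high-frequency part it gives $\|w(t)\|_{L^{2}}\lesssim(1+t)^{-s/2}\|w_{0}\|_{L^{2}\cap\dot{B}^{-s}_{2,\infty}}$, which is (\ref{R-E112}). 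The bound (\ref{R-E113}) is then immediate from (\ref{R-E112}) and the embedding $L^{p}\hookrightarrow\dot{B}^{-s}_{2,\infty}$ with $s=n(\tfrac1p-\tfrac12)$ from Lemma~\ref{lem8.5}, the hypothesis $1\leq p<2$ guaranteeing $s>0$.

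The one genuinely delicate point I anticipate is the low-frequency step. Unlike the $L^{1}$-based argument of \cite{UKS}, the bound $\|\dot{\Delta}_{q}w_{0}\|_{L^{2}}\lesssim 2^{qs}$ carries no summable gain by itself, so the decay must be extracted entirely from the Fourier heat factor $e^{-c2^{2q}t}$ via the summation estimate above; obtaining the sharp exponent $s/2$ is precisely what this dyadic bookkeeping accomplishes. By contrast, the pointwise Lyapunov estimate is not new here --- it is simply the $\dot{\Delta}_{q}$-free version of (\ref{R-E34}) --- so the real work lies in the frequency-by-frequency accounting just described.
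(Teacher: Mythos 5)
Your proof is correct, but its key step runs along a genuinely different route from the paper's own proof of Proposition \ref{prop7.1}. You share with the paper the Fourier-side Lyapunov functional (the paper's (\ref{R-E114})--(\ref{R-E120})), but from there the paper does \emph{not} integrate a pointwise Gronwall bound: it introduces a smooth unit decomposition $1\equiv\phi(\xi)+\varphi(\xi)$, proves that the $\dot{B}^{-s}_{2,\infty}$ norm of the \emph{solution} is non-increasing in time ((\ref{R-E125})--(\ref{R-E126})), and then combines the low-frequency energy inequality (\ref{R-E124}) with the interpolation inequality of Lemma \ref{lem8.2} to obtain the Bernoulli-type differential inequality (\ref{R-E128}), whose solution gives the $(1+t)^{-s/2}$ decay. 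You instead extract the decay from the pointwise factor $e^{-c|\xi|^{2}t/(1+|\xi|^{2})}$ via Plancherel, a dyadic partition of the ball $|\xi|\leq1$, the definition of $\|w_{0}\|_{\dot{B}^{-s}_{2,\infty}}$, and the summation estimate $\sum_{q\leq0}2^{2qs}e^{-c2^{2q}t}\lesssim(1+t)^{-s}$; this is precisely the method the paper itself uses for Proposition \ref{prop4.2} (the Sohinger--Strain style argument (\ref{R-E388})--(\ref{R-E448})), transplanted to Appendix A. Your route is more elementary and self-contained: it needs neither the propagation of the negative Besov norm along the flow nor Lemma \ref{lem8.2}, only the norm of the initial data plus dyadic bookkeeping. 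What the paper's route buys is exactly the framework it advertises as a new ingredient: an argument driven by energy inequalities and interpolation alone, hence adaptable to situations where no pointwise solution bound is available (as in the nonlinear analysis of Sect.~\ref{sec:5}); for this linear problem the two are equally rigorous. The only point in your write-up deserving a remark is the identification of $\int_{|\xi|\sim2^{q}}|\hat{w}_{0}|^{2}d\xi$ with $\|\dot{\Delta}_{q}w_{0}\|_{L^{2}}^{2}$: the Littlewood--Paley annuli overlap, so this holds only up to a finite-overlap constant and a shift $|q'-q|\leq2$ in the index, which is standard and harmless but should be said.
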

\begin{proof}
\noindent\textit{\underline{Step 1.}}
Perform the Fourier transform of (\ref{R-E111}) to give
\begin{equation}
A^{0}\hat{w}_{t}+\big(i|\xi|A(\omega)+L\Big)\hat{w}=0, \label{R-E114}
\end{equation}
where $A(\omega):=\sum_{j=1}^{n}A^{j}\omega_{j}$.
By performing the inner product of (\ref{R-E114}) with $\hat{w}$ and taking
the real part of the resulting equation, we get
\begin{equation}
\frac{1}{2}\frac{d}{dt}(A^{0}\hat{w},\hat{w})_{t}+c_{0}|(I-\mathcal{P})\hat{w}|^2\leq 0, \label{R-E115}
\end{equation}
where we have noticed that $A^{0}, A(\omega)$ and $L$ are real symmetric. $\mathcal{P}$ is the orthogonal projection onto $\mathcal{M}=\mathrm{Ker}L$ and $c_{0}>0$ is some constant.
\\

\noindent\textit{\underline{Step 2.}}
Multiplying (\ref{R-E114}) by $-i|\xi|K(\omega)$, performing the inner product with $\hat{w}$ and then taking the real part of the resulting equality, we arrive at
\begin{eqnarray}
&&\frac{1}{2}\frac{d}{dt}\mathrm{Im}(|\xi|K(\omega)A^{0}\hat{w},\hat{w})+|\xi|^2([K(\omega)A(\omega)]'\hat{w},\hat{w})\nonumber\\&=&|\xi|\mathrm{Im}(K(\omega)L\hat{w},\hat{w}). \label{R-E116}
\end{eqnarray}
It follows from Theorem \ref{thm2.2} that $[K(\omega)A(\omega)]'+L$ is positive definite, so there exists a constant $c_{1}>0$ such that
\begin{eqnarray}
|\xi|^2([K(\omega)A(\omega)]'\hat{w},\hat{w})\geq c_{1}|\xi|^2|\hat{w}|^2-|\xi|^2|(I-\mathcal{P})\hat{w}|^2. \label{R-E117}
\end{eqnarray}
From Young's inequality, the right side of (\ref{R-E116}) can be estimated as
\begin{eqnarray}
\Big||\xi|\mathrm{Im}(K(\omega)L\hat{w},\hat{w})\Big|\leq \epsilon |\xi|^2|\hat{w}|^2+C({\epsilon})|(I-\mathcal{P})\hat{w}|^2. \label{R-E118}
\end{eqnarray}
 Together with (\ref{R-E116})-(\ref{R-E118}), we are led to the estimate
\begin{eqnarray}
\frac{1}{2}\frac{d}{dt}\mathrm{Im}\Big(\frac{|\xi|}{1+|\xi|^2}K(\omega)A^{0}\hat{w},\hat{w}\Big)+\frac{c_{1}}{2}\frac{|\xi|^2}{1+|\xi|^2}|\hat{w}|^2\leq C|(I-\mathcal{P})\hat{w}|^2, \label{R-E119}
\end{eqnarray}
where we chosen $\epsilon>0$ satisfying $\epsilon\leq c_{1}/2$.

Next, we multiply (\ref{R-E119}) by the constant $\kappa>0$ and add the resulting inequality and (\ref{R-E115}) to get
\begin{eqnarray}
\frac{d}{dt}E[\hat{w}]+(c_{0}-\kappa C)|(I-\mathcal{P})\hat{w}|^2+\frac{c_{1}\kappa|\xi|^2}{1+|\xi|^2}|\hat{w}|^2\leq0\label{R-E120}
\end{eqnarray}
with
\begin{eqnarray*}
E[\hat{w}]=\frac{1}{2}(A^{0}\hat{w},\hat{w})+\frac{\kappa}{2}\mathrm{Im}\Big(\frac{|\xi|}{1+|\xi|^2}K(\omega)A^{0}\hat{w},\hat{w}\Big),
\end{eqnarray*}
where we chosen $\kappa>0$ so small that $c_{1}-\kappa C\geq0$ and $E[\hat{w}]\approx|\hat{w}|^2$, since $A^{0}$ is positive definite.

Let us consider the unit decomposition: $1\equiv\phi(\xi)+\varphi(\xi)$, where $\phi,\varphi\in C^{\infty}_{c}(\mathbb{R}^{n})$ ($0\leq\phi(\xi), \varphi(\xi)\leq1$) satisfy
$$
\phi(\xi)\equiv1,\   \mbox{if}\   |\xi|\leq R;  \ \ \phi(\xi)\equiv0,  \ \mbox{if} \ |\xi|\geq 2R
$$
with $R>0$. Furthermore, set $w_{1}=\mathcal{F}^{-1}[\phi(\xi)\hat{w}(\xi)]$ and $w_{2}=\mathcal{F}^{-1}[\varphi(\xi)\hat{w}(\xi)]$, so we have $w=w_{1}+w_{2}.$

\underline{\textit{Case 1}(high-frequency)}

Multiplying the inequality (\ref{R-E120}) by $\varphi^2$, we have
\begin{eqnarray}
\frac{d}{dt}(\varphi^2E[\hat{w}])+\frac{c_{1}R^2}{1+R^2}|\varphi\hat{w}|^2\leq0,\label{R-E121}
\end{eqnarray}
which implies
\begin{eqnarray}
\|w_{2}\|_{L^2}\leq Ce^{-c_{2}t}\|w_{0}\|_{L^2},\label{R-E122}
\end{eqnarray}
where the constant $c_{2}>0$ depends on $R$.

\underline{\textit{Case 2}(low-frequency)}

Multiplying the inequality (\ref{R-E120}) by $\phi^2$, we have
\begin{eqnarray}
\frac{d}{dt}(\tilde{E}[\hat{w}]^2)+\frac{c_{1}}{1+R^2}|\xi|^2|\hat{w}_{1}|^2\leq0, \label{R-E123}
\end{eqnarray}
where $\tilde{E}[\hat{w}]:=\{\phi^2E[\hat{w}]\}^{1/2}$ and $\tilde{E}[\hat{w}]\approx|\hat{w}_{1}|$.

Integrating (\ref{R-E123}) over $\mathbb{R}^{n}_{\xi}$, and using Plancherel's theorem gives
\begin{eqnarray}
\frac{d}{dt}\tilde{\mathcal{E}}_{1}^2+c_{3}\|\nabla w_{1}\|^2_{L^2}\leq 0, \label{R-E124}
\end{eqnarray}
where $$\mathcal{\tilde{E}}_{1}:=\Big(\int_{R^{n}_{\xi}}\tilde{E}[\hat{w}]^2d\xi\Big)^{1/2}\approx\|w_{1}\|_{L^2}$$
and the constant $c_{3}>0$ depends on $R$.
\\

\noindent\textit{\underline{Step 3.}}
Applying the operator $\dot{\Delta}_{q}(q\in \mathbb{Z})$ to (\ref{R-E111}) and performing the inter product with
$\dot{\Delta}_{q}w$, we can infer that
\begin{eqnarray}
\|\dot{\Delta}_{q}w\|_{L^2}+\|(I-\mathcal{P})\dot{\Delta}_{q}w\|_{L^2_{t}(L^2)}\leq\|\dot{\Delta}_{q}w\|_{L^2}, \label{R-E125}
\end{eqnarray}
which implies that
\begin{eqnarray}
\|w\|_{\dot{B}^{-s}_{2,\infty}}\leq\|w_{0}\|_{\dot{B}^{-s}_{2,\infty}}.  \label{R-E126}
\end{eqnarray}

\noindent\textit{\underline{Step 4.}}
Noticing that Lemma \ref{lem8.2}, we have (taking $k=0$ and $\varrho=s$)
\begin{eqnarray}
\|f\|_{L^2} \lesssim\|\nabla f\|^{\theta}_{L^2}\|f\|^{1-\theta}_{\dot{B}^{-s}_{2,\infty}}\ \ \  \Big(\theta=\frac{s}{1+s}\Big). \label{R-E127}
\end{eqnarray}
Applying (\ref{R-E127}) to the low-frequency part $w_{1}$, furthermore, we obtain the differential equality from (\ref{R-E124}):
\begin{eqnarray}
\frac{d}{dt}\tilde{\mathcal{E}}_{1}^2+C\|w_{0}\|_{\dot{B}^{-s}_{2,\infty}}^{-2/s}\|w_{1}\|_{L^2}^{2(1+1/s)}\leq0,   \label{R-E128}
\end{eqnarray} where we used the simple fact $\|w_{10}\|_{\dot{B}^{-s}_{2,\infty}}\leq\|w_{0}\|_{\dot{B}^{-s}_{2,\infty}}.$

Solve the differential inequality (\ref{R-E128}) to get
\begin{eqnarray}
\|w_{1}\|_{L^2}\lesssim\|w_{0}\|_{\dot{B}^{-s}_{2,\infty}}(1+t)^{-s/2}. \label{R-E129}
\end{eqnarray}
Finally, together with (\ref{R-E122}) and (\ref{R-E129}), we arrive at the decay estimate (\ref{R-E112}). Furthermore,
(\ref{R-E113}) is followed from (\ref{R-E112}) and the embedding  $L^p(\mathbb{R}^{n})\hookrightarrow\dot{B}^{-s}_{2,\infty}(\mathbb{R}^{n})(s=n(1/p-1/2))$.

Hence, the proof of Proposition \ref{prop7.1} is completed.
\end{proof}

\subsection{The hyperbolic-parabolic composite system}

Consider the following Cauchy problem for hyperbolic-parabolic composite system
\begin{equation}
\left\{
\begin{array}{l}
A^{0}U_{t} + \sum_{j=1}^{n}A^{j}U_{x_{j}}= \sum_{j,k}^{n}B^{j,k}U_{x_{j}x_{k}} , \\
U(0,x)=U_{0}.
\end{array} \right.\label{R-E130}
\end{equation}
Here $U:=(u,v)^{\top}$ where $u(t,x)$ and $v(t,x)$ are vectors with $N_{1}$ and $N_{2}$ components, respectively.
Set $N=N_{1}+N_{2}$. $A^{j}(j=0,\cdot\cdot\cdot,n), B^{j,k}$ are real constant matrices of order $N$.
where $\tilde{B}^{j,k}$ is the real constant matrix of order $N_{2}$.

We assume that the equation of (\ref{R-E130}) is ``symmetric hyperbolic-parabolic" in the sense that
\begin{itemize}
\item[(A3)] Matrices $A^{j}(j=0,\cdot\cdot\cdot,n)$ and $B^{j,k}$ are real symmetric and, in addition, $A^{0}$ is positive
definite. $B(\omega)\equiv\sum_{jk}B^{j,k}\omega_{j}\omega_{k}$ is nonnegative definite for any $\omega\in \mathbb{S}^{n-1}$ and and its null space coincides with $\mathcal{M}$.
\end{itemize}
Taking the
Fourier transform on (\ref{R-E130}) with respect to $x\in
\mathbb{R}^{n}$, we obtain
\begin{equation}
A^{0}\hat{U}_{t}+i|\xi|A(\omega)\hat{U}+|\xi|^2B(\omega)\hat{U}=0,
\label{R-E1000}
\end{equation}
where $A(\omega):=\sum_{j=1}^{n}A^{j}\omega_{j}$.
 Let $\lambda=\lambda(i\xi)$ be the eigenvalues of
(\ref{R-E1000}), which solves the characteristic equation
$$\mathrm{det}(\lambda A^{0}+i|\xi|A(\omega)+|\xi|^2B(\omega))=0.$$

Similar to Definition \ref{defn2.4}, we assume the following [SK] condition for (\ref{R-E130}).
\begin{itemize}
\item[(A4)] Let $\phi\in
\mathbb{R}^{N}$ satisfies $\phi\in\mathcal{M}$ (i.e., $B(\omega)\phi=0$) and
$\lambda A^{0}+A(\omega)\phi=0$ for some
$(\lambda,\omega)\in \mathbb{R}\times\mathbb{S}^{n-1}$, then
$\phi=0$.
\end{itemize}

Furthermore, it was shown by \cite{SK} that the [SK] condition
have the following equivalent characterization.

\begin{thm}\label{thm7.2} The following statements are
equivalent to each other.
\begin{itemize}
\item [$(\bullet)$] The system (\ref{R-E130}) satisfies the [SK] stability condition in (A4);
\item [$(\bullet)$] $\mathrm{Re}\lambda(i\xi)<0$ for $\xi\neq0$;
\item [$(\bullet)$] There is a constant $c>0$ such that $\mathrm{Re}\lambda(i\xi)\leq-c|\xi|^2/(1+|\xi|^2)$ for $\xi\in\mathbb{R}^{n}$;
\item [$(\bullet)$] There is an $N\times N$ matrix $K(\omega)$ depending smooth on $\omega\in\mathbb{S}^{n-1}$ satisfying the
properties:
\begin{itemize}
\item [(i)] $K(-\omega)=-K(\omega)$ for
$\omega\in\mathbb{S}^{n-1}$; \item [(ii)] $K(\omega)A^{0}$
is
skew-symmetric for $\omega\in\mathbb{S}^{n-1}$;
\item [(iii)]$[K(\omega)A(\omega)]'+B(\omega)$ is positive definite for
$\omega\in\mathbb{S}^{n-1}$.
\end{itemize}
\end{itemize}
\end{thm}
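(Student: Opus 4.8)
The plan is to establish the four statements as equivalent by proving the cycle (fourth) $\Rightarrow$ (third) $\Rightarrow$ (second) $\Rightarrow$ (first) $\Rightarrow$ (fourth), labelling them in the order listed. The step (third) $\Rightarrow$ (second) is immediate since $-c|\xi|^2/(1+|\xi|^2)<0$ for $\xi\neq0$. For (fourth) $\Rightarrow$ (third) I would run exactly the Lyapunov-function argument of Proposition \ref{prop7.1}, the only change being that the zeroth-order dissipation $L$ is replaced by the parabolic term $|\xi|^2B(\omega)$: the inner product of (\ref{R-E1000}) with $\hat{U}$ gives $\frac12\frac{d}{dt}(A^0\hat{U},\hat{U})+|\xi|^2(B(\omega)\hat{U},\hat{U})=0$, which controls $|(I-\mathcal{P})\hat{U}|^2$, while the compensating matrix supplies, through $[K(\omega)A(\omega)]'+B(\omega)>0$, control of the remaining degenerate component $\mathcal{P}\hat{U}$. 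Assembling $E[\hat{U}]=\frac12(A^0\hat{U},\hat{U})+\frac{\kappa}{2}\mathrm{Im}(\frac{|\xi|}{1+|\xi|^2}K(\omega)A^0\hat{U},\hat{U})$ with $\kappa$ small yields $\frac{d}{dt}E[\hat{U}]+c\frac{|\xi|^2}{1+|\xi|^2}|\hat{U}|^2\le0$ together with $E[\hat{U}]\approx|\hat{U}|^2$. Applying this to the eigensolution $\hat{U}(t)=e^{\lambda t}\phi$ attached to a pair $(\lambda,\phi)$ solving $(\lambda A^0+i|\xi|A(\omega)+|\xi|^2B(\omega))\phi=0$ forces $\mathrm{Re}\,\lambda\le -\tfrac{c}{2}|\xi|^2/(1+|\xi|^2)$, which is the third statement.

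For (second) $\Rightarrow$ (first) I argue by contraposition. If (A4) fails there is $\phi\neq0$ with $B(\omega)\phi=0$ and $(\lambda A^0+A(\omega))\phi=0$ for some real $\lambda$ and some $\omega\in\mathbb{S}^{n-1}$. Then at the frequency $\xi=r\omega$ with $r>0$ one computes $(\mu A^0+irA(\omega)+r^2B(\omega))\phi=(\mu-ir\lambda)A^0\phi$, so $\mu=ir\lambda$ is an eigenvalue with eigenvector $\phi$ and $\mathrm{Re}\,\mu=0$ at $\xi\neq0$, contradicting the second statement.

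The substantive direction is (first) $\Rightarrow$ (fourth), the construction of $K(\omega)$. First I reduce to $A^0=I$ by the congruence $U\mapsto(A^0)^{1/2}U$, which keeps all $A^j,B^{j,k}$ symmetric and $B(\omega)\ge0$; a skew-symmetric $\tilde{K}(\omega)$ with $[\tilde{K}\tilde{A}(\omega)]'+\tilde{B}(\omega)>0$ then transfers back to $K(\omega)=(A^0)^{1/2}\tilde{K}(\omega)(A^0)^{-1/2}$, for which $K(\omega)A^0$ is skew-symmetric and, by congruence with $(A^0)^{1/2}$, $[K(\omega)A(\omega)]'+B(\omega)>0$. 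With $A^0=I$ and $\omega$ fixed, write the spectral decomposition $A(\omega)=\sum_\alpha\lambda_\alpha\Pi_\alpha$ into distinct eigenvalues with orthogonal eigenprojections. The key algebraic observation is that the [SK] condition (A4) is equivalent to $B(\omega)$ being positive definite on each eigenspace $\mathrm{Ran}\,\Pi_\alpha$: an eigenvector $\phi$ of $A(\omega)$ with $(B\phi,\phi)=0$ would satisfy $B\phi=0$ since $B\ge0$, violating (A4). Setting $K=2\sum_{\alpha\neq\beta}(\lambda_\alpha-\lambda_\beta)^{-1}\Pi_\alpha B\Pi_\beta$ produces a skew-symmetric matrix with $[K,A]=KA-AK=-2\sum_{\alpha\neq\beta}\Pi_\alpha B\Pi_\beta$, whence $[KA]'+B=\tfrac12[K,A]+B=\sum_\alpha\Pi_\alpha B\Pi_\alpha>0$, a block-diagonal sum of the positive definite pieces $\Pi_\alpha B\Pi_\alpha$. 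Thus an admissible $K$ exists at every $\omega$.

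It remains to make $K(\omega)$ depend smoothly on $\omega$ and to enforce the oddness (i), and this is where the main obstacle lies: the explicit formula above blows up as two eigenvalues of $A(\omega)$ coalesce, and the eigenprojections are not globally smooth across such crossings on $\mathbb{S}^{n-1}$. I would circumvent this by noting that, for each fixed $\omega$, the set of admissible $K$ (those with $K A^0$ skew-symmetric and $[KA(\omega)]'+B(\omega)>0$) is open, convex and, by the previous step, nonempty. Covering the compact sphere by finitely many balls on each of which a single constant matrix stays admissible (possible by openness) and gluing with a subordinate partition of unity then produces a smooth, bounded $K(\omega)$, the convex combination being admissible at every point and circumventing the singular formula. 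Finally, since $A(-\omega)=-A(\omega)$ and $B(-\omega)=B(\omega)$, the matrix $-K(-\omega)$ is admissible at $\omega$ whenever $K$ is admissible at $-\omega$, so replacing $K(\omega)$ by the average $\tfrac12(K(\omega)-K(-\omega))$ preserves admissibility by convexity while securing $K(-\omega)=-K(\omega)$; transferring back through $(A^0)^{1/2}$ completes (fourth) and closes the cycle. The delicate point throughout is precisely this uniform, smooth control of $K(\omega)$ near eigenvalue crossings, resolved by the convexity and partition-of-unity argument rather than by the explicit but singular eigenprojection formula.
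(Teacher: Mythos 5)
The paper itself gives no proof of Theorem \ref{thm7.2}: like Theorem \ref{thm2.2}, it is quoted directly from \cite{SK}, so there is no in-paper argument to compare yours against. Judged on its own merits, your cyclic proof (fourth)$\Rightarrow$(third)$\Rightarrow$(second)$\Rightarrow$(first)$\Rightarrow$(fourth) is correct, and it is essentially a reconstruction of the classical Shizuta--Kawashima argument. The two substantive directions are handled soundly. For (fourth)$\Rightarrow$(third), evaluating the Lyapunov functional $E[\hat U]$ with weight $|\xi|/(1+|\xi|^2)$ on an eigensolution $e^{\lambda t}\phi$ of (\ref{R-E1000}) does force $\mathrm{Re}\,\lambda\leq -c|\xi|^2/(1+|\xi|^2)$, since $E[\hat U(t)]=e^{2\mathrm{Re}\lambda\,t}E[\phi]$ and $E[\phi]\approx|\phi|^2$. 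For (first)$\Rightarrow$(fourth), your three steps --- normalization to $A^0=I$; the eigenprojection formula $K=2\sum_{\alpha\neq\beta}(\lambda_\alpha-\lambda_\beta)^{-1}\Pi_\alpha B\Pi_\beta$, which is skew and gives $[KA]'+B=\tfrac12[K,A]+B=\sum_\alpha\Pi_\alpha B\Pi_\alpha>0$; and the globalization by openness, convexity of the admissible set, a finite cover of $\mathbb{S}^{n-1}$ with a partition of unity, followed by antisymmetrization $K(\omega)\mapsto\tfrac12(K(\omega)-K(-\omega))$ for property (i) --- are exactly the right way to obtain smooth dependence on $\omega$ across eigenvalue crossings, where the explicit formula degenerates. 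The key algebraic identification, namely that (A4) is equivalent to positive definiteness of $B(\omega)$ on each eigenspace of $A(\omega)$ (using $B\geq0$ and $(B\phi,\phi)=0\Rightarrow B\phi=0$), is also correct.

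Two points are glossed over; both are routine but should be recorded. First, in (fourth)$\Rightarrow$(third) the compensating identity for (\ref{R-E1000}) produces, unlike the purely hyperbolic case of Proposition \ref{prop7.1} where the right-hand side is $|\xi|\,\mathrm{Im}(K(\omega)L\hat w,\hat w)$, a cubic term $|\xi|^3\,\mathrm{Im}(K(\omega)B(\omega)\hat U,\hat U)$. It is controlled by the Cauchy--Schwarz inequality for the semi-inner product $(B\cdot,\cdot)$, giving $|(KB\hat U,\hat U)|\lesssim (B\hat U,\hat U)^{1/2}|\hat U|$, after which the weight $|\xi|/(1+|\xi|^2)$ built into $E[\hat U]$ leaves a term $|\xi|^2(B\hat U,\hat U)$ that is absorbed by the dissipation of the first energy identity for small $\kappa$; so the argument is not literally ``exactly'' that of Proposition \ref{prop7.1} and needs this one extra estimate. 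Second, the reduction to $A^0=I$ tacitly uses that condition (A4) is invariant under the congruence: with $\psi=(A^0)^{1/2}\phi$ one has $\tilde B(\omega)\psi=0$ iff $B(\omega)\phi=0$ and $(\lambda I+\tilde A(\omega))\psi=0$ iff $(\lambda A^0+A(\omega))\phi=0$, which settles it. With these two remarks inserted, your proof is complete and self-contained, which is more than the paper provides for this statement.
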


Having the above preparation,  we also obtain the decay estimates for (\ref{R-E130}).
\begin{prop}\label{prop7.2}
Let the assumptions (A3)-(A4) hold. Suppose $U_{0}\in L^2(\mathbb{R}^{n})\\\cap \dot{B}^{-s}_{2,\infty}(\mathbb{R}^{n})$ for $s>0$, then the solution of (\ref{R-E130}) has the decay rate
\begin{equation}
\|U\|_{L^2(\mathbb{R}^{n})}\lesssim (1+t)^{-s/2}. \label{R-E131}
\end{equation}
In particular, suppose $U_{0}\in L^2(\mathbb{R}^{n})\cap L^p(\mathbb{R}^{n})(1\leq p<2$), one further has
\begin{equation}
\|w\|_{L^2(\mathbb{R}^{n})}\lesssim (1+t)^{-\frac{n}{2}(\frac{1}{p}-\frac{1}{2})}. \label{R-E132}
\end{equation}
\end{prop}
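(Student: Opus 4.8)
The plan is to follow the four-step scheme of Proposition \ref{prop7.1} essentially verbatim, the only structural change being that the zeroth-order damping $L$ is replaced by the second-order dissipation $|\xi|^2B(\omega)$ appearing in the Fourier picture (\ref{R-E1000}). First I would take the inner product of (\ref{R-E1000}) with $\hat U$ and extract the real part; since $A^0,A(\omega),B(\omega)$ are real symmetric by (A3), this gives the basic energy identity $\frac12\frac{d}{dt}(A^0\hat U,\hat U)+|\xi|^2(B(\omega)\hat U,\hat U)=0$. Because $B(\omega)$ is nonnegative definite with null space $\mathcal M$, one has $(B(\omega)\hat U,\hat U)\geq c_0|(I-\mathcal P)\hat U|^2$, so the dissipation here is $c_0|\xi|^2|(I-\mathcal P)\hat U|^2$, which carries an extra factor $|\xi|^2$ compared with the purely hyperbolic case.

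Next I would run the compensating-function argument: multiply (\ref{R-E1000}) by $-i|\xi|K(\omega)$, take the inner product with $\hat U$, and extract the real part. Using that $K(\omega)A^0$ is skew-symmetric and that $[K(\omega)A(\omega)]'+B(\omega)$ is positive definite (Theorem \ref{thm7.2}), the leading term is bounded below by $c_1|\xi|^2|\hat U|^2-C|\xi|^2|(I-\mathcal P)\hat U|^2$. The genuinely new term is the cubic contribution $-|\xi|^3\mathrm{Im}(K(\omega)B(\omega)\hat U,\hat U)$ from the parabolic part. I would control it by noting $B(\omega)\hat U=B(\omega)(I-\mathcal P)\hat U$ (since $\mathcal M=\ker B(\omega)$), multiplying the compensating inequality by $(1+|\xi|^2)^{-1}$, and applying Young's inequality in the form $\tfrac{|\xi|^3}{1+|\xi|^2}|(I-\mathcal P)\hat U||\hat U|\leq\epsilon\tfrac{|\xi|^2}{1+|\xi|^2}|\hat U|^2+C_\epsilon\tfrac{|\xi|^4}{1+|\xi|^2}|(I-\mathcal P)\hat U|^2$. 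Since $\tfrac{|\xi|^4}{1+|\xi|^2}\leq|\xi|^2$, the residual is $C_\epsilon|\xi|^2|(I-\mathcal P)\hat U|^2$, which is precisely the dissipation produced in Step 1 and is therefore absorbable. Forming the Lyapunov functional $E[\hat U]=\tfrac12(A^0\hat U,\hat U)+\tfrac\kappa2\tfrac{|\xi|}{1+|\xi|^2}\mathrm{Im}(K(\omega)A^0\hat U,\hat U)$ with $\kappa$ small then yields the pointwise differential inequality of exactly the shape of (\ref{R-E120}), namely $\frac{d}{dt}E[\hat U]+\tfrac{c_1\kappa|\xi|^2}{2(1+|\xi|^2)}|\hat U|^2\leq0$ together with $E[\hat U]\approx|\hat U|^2$.

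Once this inequality is in hand the remainder is identical to Proposition \ref{prop7.1}. I would split $U=U_1+U_2$ by the unit decomposition $1\equiv\phi+\varphi$; on the high-frequency piece $\tfrac{|\xi|^2}{1+|\xi|^2}\gtrsim1$ gives exponential decay $\|U_2\|_{L^2}\lesssim e^{-c_2t}\|U_0\|_{L^2}$, while integrating the low-frequency inequality over $\mathbb R^n_\xi$ gives $\frac{d}{dt}\tilde{\mathcal E}_1^2+c_3\|\nabla U_1\|_{L^2}^2\leq0$. Applying $\dot\Delta_q$ to (\ref{R-E130}) and repeating the Step 1 energy identity shows that $(A^0\widehat{\dot\Delta_q U},\widehat{\dot\Delta_q U})$ is nonincreasing, hence $\|U\|_{\dot B^{-s}_{2,\infty}}\leq C\|U_0\|_{\dot B^{-s}_{2,\infty}}$; the interpolation inequality of Lemma \ref{lem8.2} then converts the low-frequency estimate into $\frac{d}{dt}\tilde{\mathcal E}_1^2+C\|U_0\|_{\dot B^{-s}_{2,\infty}}^{-2/s}\|U_1\|_{L^2}^{2(1+1/s)}\leq0$, which integrates to $\|U_1\|_{L^2}\lesssim\|U_0\|_{\dot B^{-s}_{2,\infty}}(1+t)^{-s/2}$. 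Combining the two frequency regimes yields (\ref{R-E131}), and (\ref{R-E132}) follows from the embedding $L^p\hookrightarrow\dot B^{-s}_{2,\infty}$ with $s=n(1/p-1/2)$ in Lemma \ref{lem8.5}. The only real obstacle is the cubic $B(\omega)$-term in Step 2; everything hinges on the observation that the extra derivative it carries is compensated exactly by the $|\xi|^2$-strengthened dissipation of the parabolic system, after which the argument is structurally the same as in the damped hyperbolic case.
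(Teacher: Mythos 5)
Your proposal is correct and follows essentially the same route as the paper's own (sketched) proof: the Fourier energy identity, the compensating matrix $K(\omega)$ with the Lyapunov functional $E[\hat U]$, the high/low-frequency unit decomposition, preservation of the $\dot{B}^{-s}_{2,\infty}$ norm, the interpolation Lemma \ref{lem8.2}, and the embedding Lemma \ref{lem8.5}. In fact you supply a detail the paper glosses over, namely the absorption of the cubic term $|\xi|^{3}\mathrm{Im}(K(\omega)B(\omega)\hat U,\hat U)$ into the $|\xi|^{2}|(I-\mathcal{P})\hat U|^{2}$ dissipation via $B(\omega)\hat U=B(\omega)(I-\mathcal{P})\hat U$ and Young's inequality, which is exactly how the paper's asserted inequality (\ref{R-E134}) is obtained.
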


\begin{proof}
The proof is just similar to that of Proposition \ref{prop7.1}. In order to explain how the strong dissipative term
plays a key role, we give the outline of the proof.

Firstly, there exists a constant $c_{0}>0$ such that
\begin{equation}
\frac{1}{2}\frac{d}{dt}(A^{0}\hat{U},\hat{U})+c_{0}|\xi|^2|\hat{v}|^2\leq 0, \label{R-E133}
\end{equation}
since $A^{0}, A(\omega)$ are real symmetric and $\tilde{B}^{j,k}$ is real symmetric and positive definite.
Then it follows from the [SK] condition that there exist two constants $c_{1}\geq0$ and $c_{2}>0$ such that

\begin{eqnarray}
\frac{d}{dt}E[\hat{U}]+c_{1}|\xi|^2|\hat{v}|^2+\frac{c_{2}|\xi|^2}{1+|\xi|^2}|\hat{U}|^2\leq0\label{R-E134}
\end{eqnarray}
with
\begin{eqnarray*}
E[\hat{U}]=\frac{1}{2}(A^{0}\hat{U},\hat{U})+\frac{\kappa}{2}\mathrm{Im}\Big(\frac{|\xi|}{1+|\xi|^2}K(\omega)A^{0}\hat{U},\hat{U}\Big),
\end{eqnarray*}
where $\kappa>0$ is chosen so small that $E[\hat{U}]\approx\hat{U}^2$.

Just doing the same high-frequency and low-frequency decomposition as (\ref{R-E121})-(\ref{R-E124}), we arrive at
\begin{eqnarray}
\|U_{2}\|_{L^2}\leq Ce^{-c_{3}t}\|U_{0}\|_{L^2},\label{R-E135}
\end{eqnarray}
and
 \begin{eqnarray}
\frac{d}{dt}\mathcal{\tilde{E}}_{2}^2+c_{4}\|\nabla U_{1}\|^2_{L^2}\leq 0, \label{R-E136}
\end{eqnarray}
where $U_{1}:=\mathcal{F}^{-1}[\phi\hat{U}], U_{2}:=\mathcal{F}^{-1}[\varphi\hat{U}]$ and $U=U_{1}+U_{2}$, additionally, $$\mathcal{\tilde{E}}_{2}:=\Big(\int_{R^{n}_{\xi}}\phi^2E[\hat{U}]^2d\xi\Big)^{1/2}\approx \|U_{1}\|_{L^2}$$
and the constants $c_{3},c_{4}>0$ both depend on $R$.



Finally, with the aid of Lemma \ref{lem8.2}, we can obtain
\begin{eqnarray}
\frac{d}{dt}\mathcal{\tilde{E}}_{2}^2+C\|U_{0}\|_{\dot{B}^{-s}_{2,\infty}}^{-2/s}\|U_{1}\|^{2(1+1/s)}_{L^2}\leq0,\label{R-E139}
\end{eqnarray}
which leads to the desired decay estimates (\ref{R-E131}) and (\ref{R-E132}).\end{proof}


\begin{rem}\label{rem7.1}
The low-frequency and high-frequency decomposition enable us to capture the effective pointwise behavior of the pseudo-differential operator $(1-\Delta)^{-\frac{1}{2}}\nabla$, which is a crucial point in our analysis. It is the first time to obtain the decay estimates for the low-frequency part of solutions by using the interpolation inequality related to the Besov space $\dot{B}^{-s}_{2,\infty}$, which is
quite different in comparison with the classical manner as in \cite{UKS}. Furthermore,  the low-frequency and high-frequency techniques  inspire us to use the Littlewood-Paley decomposition and do more elaborate analysis, which provides the motivation on studying decay problems on the framework of spatially Besov spaces for general damped hyperbolic system or hyperbolic-parabolic composite system.
\end{rem}


\section{Appendix B (interpolation inequalities)}\setcounter{equation}{0}\label{sec:B}
For the convenience of reader, we present some interpolation inequalities, actually,
which parallel the work of \textsc{Sohinger} and \textsc{Strain} \cite{SS}. However, we make some simplicity for use,
since their inequalities are related to the mixed spaces containing the microscopic velocity.
\begin{lem}\label{lem8.1}
Suppose $k\geq0$ and $m,\varrho>0$. Then the following inequality holds
\begin{eqnarray}
\|f\|_{\dot{B}^{k}_{2,1}}\lesssim \|f\|^{\theta}_{\dot{B}^{k+m}_{2,\infty}}\|f\|^{1-\theta}_{\dot{B}^{-\varrho}_{2,\infty}} \ \  \mbox{with}\ \ \ \theta=\frac{\varrho+k}{\varrho+k+m}. \label{R-E145}
\end{eqnarray}
\end{lem}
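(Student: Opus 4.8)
The plan is to prove the inequality directly from the Littlewood--Paley characterization of the homogeneous Besov norms, splitting the defining series for $\dot{B}^{k}_{2,1}$ at a well-chosen dyadic threshold. Recall that $\|f\|_{\dot{B}^{k}_{2,1}}=\sum_{q\in\mathbb{Z}}2^{qk}\|\dot{\Delta}_q f\|_{L^2}$, while the two norms on the right control each individual block from opposite sides: the definition of the $\dot{B}^{-\varrho}_{2,\infty}$-norm gives $\|\dot{\Delta}_q f\|_{L^2}\le 2^{q\varrho}\|f\|_{\dot{B}^{-\varrho}_{2,\infty}}$, and that of the $\dot{B}^{k+m}_{2,\infty}$-norm gives $\|\dot{\Delta}_q f\|_{L^2}\le 2^{-q(k+m)}\|f\|_{\dot{B}^{k+m}_{2,\infty}}$. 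We may assume both right-hand norms are finite and nonzero, the degenerate cases being trivial.

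First I would fix an integer $N$ (to be optimized later) and split the series into the low frequencies $q<N$ and the high frequencies $q\ge N$. On the low-frequency range I insert the $\dot{B}^{-\varrho}_{2,\infty}$ bound to obtain
\[
\sum_{q<N}2^{qk}\|\dot{\Delta}_q f\|_{L^2}\le \|f\|_{\dot{B}^{-\varrho}_{2,\infty}}\sum_{q<N}2^{q(k+\varrho)}\lesssim 2^{N(k+\varrho)}\|f\|_{\dot{B}^{-\varrho}_{2,\infty}},
\]
where the geometric series converges precisely because $k+\varrho>0$ (this is where the hypotheses $k\ge 0$ and $\varrho>0$ are used). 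On the high-frequency range I insert the $\dot{B}^{k+m}_{2,\infty}$ bound to obtain
\[
\sum_{q\ge N}2^{qk}\|\dot{\Delta}_q f\|_{L^2}\le \|f\|_{\dot{B}^{k+m}_{2,\infty}}\sum_{q\ge N}2^{-qm}\lesssim 2^{-Nm}\|f\|_{\dot{B}^{k+m}_{2,\infty}},
\]
the convergence now using $m>0$. Adding the two yields $\|f\|_{\dot{B}^{k}_{2,1}}\lesssim 2^{N(k+\varrho)}\|f\|_{\dot{B}^{-\varrho}_{2,\infty}}+2^{-Nm}\|f\|_{\dot{B}^{k+m}_{2,\infty}}$.

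It remains to optimize in $N$, which is the only genuinely delicate point. Treating $N$ as a real parameter, the two terms balance when $2^{N(k+\varrho+m)}\approx \|f\|_{\dot{B}^{k+m}_{2,\infty}}/\|f\|_{\dot{B}^{-\varrho}_{2,\infty}}$; choosing $N$ to be the integer nearest to this value changes each exponential factor by at most a fixed constant and so does not affect the estimate. Substituting this $N$ back, both terms become comparable to $\|f\|_{\dot{B}^{k+m}_{2,\infty}}^{\theta}\|f\|_{\dot{B}^{-\varrho}_{2,\infty}}^{1-\theta}$ with $\theta=(k+\varrho)/(k+\varrho+m)$, which is exactly the claimed bound. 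The main obstacle is thus the bookkeeping of the optimization together with the integer rounding; the structural content is simply the observation that $k+\varrho>0$ and $m>0$ guarantee convergence of the two truncated geometric series, so that splitting at a single frequency $N$ suffices to control the full $\dot{B}^{k}_{2,1}$ sum.
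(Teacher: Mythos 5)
Your proof is correct, and it is the standard argument for this inequality: split the $\dot{B}^{k}_{2,1}$ sum at a dyadic threshold $N$, bound the low frequencies by the $\dot{B}^{-\varrho}_{2,\infty}$ norm and the high frequencies by the $\dot{B}^{k+m}_{2,\infty}$ norm (both geometric series converging precisely because $k+\varrho>0$ and $m>0$), and then optimize over $N$ to balance the two terms. The paper itself gives no inline proof of Lemma \ref{lem8.1} but defers to the work of Sohinger and Strain \cite{SS}, whose proof of the corresponding inequality proceeds by exactly this decomposition and optimization, so your argument matches the intended one; your handling of the degenerate cases and of the integer rounding of $N$ (which only costs a constant depending on $k,m,\varrho$) is also sound.
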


As a matter of fact, the interpolation inequality (\ref{R-E145}) can hold true for the general case $1\leq p\leq\infty$ rather than $p=2$ only. In particular, we have
\begin{lem}\label{lem8.2}
Suppose $k\geq0$ and $m,\varrho>0$.  Then the following inequality holds
\begin{eqnarray}
\|\Lambda^{k}f\|_{L^2}\lesssim \|\Lambda^{k+m}f\|^{\theta}_{L^2}\|f\|^{1-\theta}_{\dot{B}^{-\varrho}_{2,\infty}} \ \  \mbox{with}\ \ \ \theta=\frac{\varrho+k}{\varrho+k+m}, \label{R-E146}
\end{eqnarray}
where (\ref{R-E146}) is also true for $\partial^{\alpha}$ with $|\alpha|=k(k$ nonnegative integer).
\end{lem}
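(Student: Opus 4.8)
The plan is to establish \eqref{R-E146} directly from the Littlewood--Paley characterization of $\|\Lambda^{k}\cdot\|_{L^2}$ together with a frequency splitting whose cut-off is optimized in the dyadic parameter; the closing remark for $\partial^{\alpha}$ will then follow by the same argument. First I would record the equivalence
\[
\|\Lambda^{k}f\|_{L^2}^2\approx\sum_{q\in\mathbb{Z}}2^{2qk}\|\dot{\Delta}_{q}f\|_{L^2}^2,
\]
which is immediate from Plancherel's theorem and part (ii) of Lemma \ref{lem3.1}, since $\|\dot{\Delta}_{q}\Lambda^{k}f\|_{L^2}\approx 2^{qk}\|\dot{\Delta}_{q}f\|_{L^2}$. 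Abbreviating $A:=\|\Lambda^{k+m}f\|_{L^2}$ and $B:=\|f\|_{\dot{B}^{-\varrho}_{2,\infty}}$, the two block bounds I would exploit are $\|\dot{\Delta}_{q}f\|_{L^2}\le 2^{q\varrho}B$, directly from the definition of the $\dot{B}^{-\varrho}_{2,\infty}$ norm, and the elementary rewriting $2^{qk}\|\dot{\Delta}_{q}f\|_{L^2}=2^{-qm}\bigl(2^{q(k+m)}\|\dot{\Delta}_{q}f\|_{L^2}\bigr)$, which ties the high modes to $A$.

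Next I would split the dyadic sum at an integer level $j_{0}$, to be chosen later. On the low range $q\le j_{0}$, using $k+\varrho>0$ to sum the geometric series,
\[
\sum_{q\le j_{0}}2^{2qk}\|\dot{\Delta}_{q}f\|_{L^2}^2\le B^{2}\sum_{q\le j_{0}}2^{2q(k+\varrho)}\lesssim B^{2}2^{2j_{0}(k+\varrho)},
\]
while on the high range $q>j_{0}$, using $m>0$,
\[
\sum_{q> j_{0}}2^{2qk}\|\dot{\Delta}_{q}f\|_{L^2}^2\le 2^{-2j_{0}m}\sum_{q> j_{0}}2^{2q(k+m)}\|\dot{\Delta}_{q}f\|_{L^2}^2\lesssim 2^{-2j_{0}m}A^{2}.
\]
Adding the two contributions gives $\|\Lambda^{k}f\|_{L^2}^2\lesssim B^{2}2^{2j_{0}(k+\varrho)}+A^{2}2^{-2j_{0}m}$.

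Finally I would optimize in $j_{0}$: balancing the two terms forces $2^{j_{0}}\approx(A/B)^{1/(k+\varrho+m)}$, and choosing $j_{0}\in\mathbb{Z}$ nearest to this value produces
\[
\|\Lambda^{k}f\|_{L^2}^2\lesssim A^{2\theta}B^{2(1-\theta)},\qquad \theta=\frac{\varrho+k}{\varrho+k+m},
\]
which is exactly \eqref{R-E146} after taking square roots. For $\partial^{\alpha}$ with $|\alpha|=k$ a nonnegative integer the identical argument applies, because $\|\dot{\Delta}_{q}\partial^{\alpha}f\|_{L^2}\approx 2^{qk}\|\dot{\Delta}_{q}f\|_{L^2}$ on each dyadic annulus. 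I do not anticipate a genuine obstacle; the only point needing mild care is that $j_{0}$ must be an integer rather than the exact real minimizer, but rounding affects only the implicit constant. As a shortcut, \eqref{R-E146} can also be read off from Lemma \ref{lem8.1} through the embeddings $\dot{B}^{k}_{2,1}\hookrightarrow\dot{B}^{k}_{2,2}$ and $\dot{B}^{k+m}_{2,2}\hookrightarrow\dot{B}^{k+m}_{2,\infty}$ combined with $\|\Lambda^{s}g\|_{L^2}\approx\|g\|_{\dot{B}^{s}_{2,2}}$.
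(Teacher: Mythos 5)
Your proposal is correct, but your main argument takes a genuinely different route from the paper's. The paper disposes of this lemma in one line: it invokes Lemma \ref{lem8.1} (the $\dot{B}^{k}_{2,1}$--$\dot{B}^{k+m}_{2,\infty}$--$\dot{B}^{-\varrho}_{2,\infty}$ interpolation inequality, which the paper itself does not prove but quotes as paralleling Sohinger--Strain) together with the chain $\|\Lambda^{k}f\|_{L^{2}}\approx\|f\|_{\dot{B}^{k}_{2,2}}\lesssim\|f\|_{\dot{B}^{k}_{2,1}}$ and $\|f\|_{\dot{B}^{k+m}_{2,\infty}}\lesssim\|f\|_{\dot{B}^{k+m}_{2,2}}\approx\|\Lambda^{k+m}f\|_{L^{2}}$ --- exactly the ``shortcut'' you append in your final sentence. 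Your dyadic-splitting proof instead establishes the inequality from scratch: the cutoff $j_{0}$, the two geometric series (convergent precisely because $k+\varrho>0$ and $m>0$), and the balancing $2^{j_{0}}\approx(A/B)^{1/(k+\varrho+m)}$ are all sound, and the rounding of $j_{0}$ indeed only costs a constant depending on $k,m,\varrho$. What your route buys is self-containedness --- it effectively reproves the mechanism behind Lemma \ref{lem8.1} in the $\ell^{2}$ setting rather than taking it as a black box --- at the cost of length; the paper's route buys brevity by leaning on an unproved cited lemma. One small inaccuracy to fix: for a \emph{single} multi-index $\alpha$ with $|\alpha|=k$, the two-sided equivalence $\|\dot{\Delta}_{q}\partial^{\alpha}f\|_{L^{2}}\approx 2^{qk}\|\dot{\Delta}_{q}f\|_{L^{2}}$ you assert is false in general, since the symbol $\xi^{\alpha}$ can degenerate on part of the annulus (e.g.\ $\xi^{\alpha}=\xi_{1}^{k}$ vanishes on $\{\xi_{1}=0\}$); only the direction $\lesssim$ holds. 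This is harmless because your argument uses only that direction --- or, even more simply, $\|\partial^{\alpha}f\|_{L^{2}}\leq\|\Lambda^{k}f\|_{L^{2}}$ by Plancherel, so the $\partial^{\alpha}$ statement is an immediate corollary of the $\Lambda^{k}$ case.
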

The proof follows from the embedding
$\dot{B}^{k}_{2,1}\hookrightarrow\dot{B}^{k}_{2,2}\hookrightarrow\dot{B}^{k}_{2,\infty}$ and $\|f\|_{\dot{B}^{k}_{2,2}}\approx\|\Lambda^{k}f\|_{L^2}$ directly.

\begin{lem}\label{lem8.3}
Suppose that $m\neq\varrho$. Then the following inequality holds
\begin{eqnarray}
\|f\|_{\dot{B}^{k}_{p,1}}\lesssim \|f\|^{1-\theta}_{\dot{B}^{m}_{r,\infty}}\|f\|^{\theta}_{\dot{B}^{\varrho}_{r,\infty}}, \label{R-E147}
\end{eqnarray}
where $0<\theta<1$,  $1\leq r\leq p\leq\infty$ and $$k+n\Big(\frac{1}{r}-\frac{1}{p}\Big)=m(1-\theta)+\varrho\theta.$$
\end{lem}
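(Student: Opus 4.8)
The plan is to reduce everything to a single dyadic sum in the $L^r$ scale and then interpolate by splitting that sum at an optimally chosen frequency. First I would write the target norm out by definition,
$$\|f\|_{\dot{B}^{k}_{p,1}}=\sum_{q\in\mathbb{Z}}2^{qk}\|\dot{\Delta}_{q}f\|_{L^p},$$
and recall that each block $\dot{\Delta}_{q}f$ has Fourier support in the annulus $\{|\xi|\sim 2^{q}\}\subset\{|\xi|\lesssim 2^{q}\}$. Since $1\le r\le p\le\infty$, the Bernstein inequality (Lemma \ref{lem3.1}(i), applied with $\alpha=0$ and the support contained in a ball of radius $\sim 2^{q}$) gives $\|\dot{\Delta}_{q}f\|_{L^p}\lesssim 2^{qn(\frac1r-\frac1p)}\|\dot{\Delta}_{q}f\|_{L^r}$. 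Setting $\beta:=k+n\big(\frac1r-\frac1p\big)$, this reduces the claim to the purely $L^r$-based estimate $\sum_{q}2^{q\beta}\|\dot{\Delta}_{q}f\|_{L^r}\lesssim \|f\|^{1-\theta}_{\dot{B}^{m}_{r,\infty}}\|f\|^{\theta}_{\dot{B}^{\varrho}_{r,\infty}}$, where the dimensional balance hypothesis reads exactly $\beta=m(1-\theta)+\varrho\theta$.

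Next I would exploit the two supremum bounds coming from the definition of the $\dot{B}^{s}_{r,\infty}$ norms, namely $\|\dot{\Delta}_{q}f\|_{L^r}\le 2^{-qm}\|f\|_{\dot{B}^{m}_{r,\infty}}$ and $\|\dot{\Delta}_{q}f\|_{L^r}\le 2^{-q\varrho}\|f\|_{\dot{B}^{\varrho}_{r,\infty}}$. Writing $A:=\|f\|_{\dot{B}^{m}_{r,\infty}}$ and $B:=\|f\|_{\dot{B}^{\varrho}_{r,\infty}}$, and assuming without loss of generality that $m<\varrho$ (the case $m>\varrho$ follows by swapping the roles of the two norms, which is legitimate because $m\neq\varrho$), the balance identity forces $\beta-m=\theta(\varrho-m)>0$ and $\beta-\varrho=(1-\theta)(m-\varrho)<0$. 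I would therefore fix an integer threshold $Q$ and estimate
$$\sum_{q\le Q}2^{q\beta}\|\dot{\Delta}_{q}f\|_{L^r}+\sum_{q> Q}2^{q\beta}\|\dot{\Delta}_{q}f\|_{L^r}\lesssim A\sum_{q\le Q}2^{q(\beta-m)}+B\sum_{q> Q}2^{q(\beta-\varrho)}\lesssim 2^{Q(\beta-m)}A+2^{Q(\beta-\varrho)}B,$$
where both geometric series converge precisely because of the sign information on $\beta-m$ and $\beta-\varrho$.

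Finally I would optimize over $Q$: choosing $Q$ so that $2^{Q(\varrho-m)}\approx B/A$ balances the two terms, each becomes $\approx A^{1-(\beta-m)/(\varrho-m)}B^{(\beta-m)/(\varrho-m)}$, and since $(\beta-m)/(\varrho-m)=\theta$ this is exactly $A^{1-\theta}B^{\theta}=\|f\|^{1-\theta}_{\dot{B}^{m}_{r,\infty}}\|f\|^{\theta}_{\dot{B}^{\varrho}_{r,\infty}}$, completing the proof. The only genuinely delicate point, and the step I expect to require the most care, is the optimization over the integer threshold $Q$: one must verify that rounding the real minimizer $\log_{2}\big((B/A)^{1/(\varrho-m)}\big)$ to a nearby integer only costs a harmless multiplicative constant (absorbed into $\lesssim$), and one should treat separately the degenerate cases $A=0$ or $B=0$, where $f$ is (modulo polynomials) trivial and the inequality holds vacuously. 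The endpoint cases $p=\infty$ or $r=\infty$ need no special argument, since Bernstein and the two supremum bounds remain valid there.
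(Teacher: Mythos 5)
Your proposal is correct and is essentially the paper's own argument: the paper gives no independent proof but defers to \cite{SS}, whose proof is precisely this reduction via Bernstein's inequality to the $L^{r}$ dyadic scale, followed by splitting the sum at a threshold $Q$, bounding each half by the appropriate $\dot{B}^{s}_{r,\infty}$ norm times a geometric series, and optimizing $Q$ so that $2^{Q(\varrho-m)}\approx\|f\|_{\dot{B}^{\varrho}_{2,\infty}$-type ratio, yielding the product $A^{1-\theta}B^{\theta}$. Your attention to the integer rounding of $Q$ and the degenerate cases $A=0$ or $B=0$ is appropriate care, and nothing further is needed.
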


The proof is just the same one as in \cite{SS} without the mixed Hilbert space related to the microscopic velocity.
\begin{lem}\label{lem8.4}
Suppose that $m\neq\varrho$. One has the interpolation inequality of Gagliardo-Nirenberg-Sobolev type
\begin{eqnarray}
\|\Lambda^{k}f\|_{L^{q}}\lesssim \|\Lambda^{m}f\|^{1-\theta}_{L^{r}}\|\Lambda^{\varrho}f\|^{\theta}_{L^{r}}, \label{R-E148}
\end{eqnarray}
where $0\leq\theta\leq1$,  $1\leq r\leq q\leq\infty$ and $$k+n\Big(\frac{1}{r}-\frac{1}{q}\Big)=m(1-\theta)+\varrho\theta.$$
\end{lem}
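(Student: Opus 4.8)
The plan is to deduce Lemma \ref{lem8.4} directly from the Besov-space interpolation inequality of Lemma \ref{lem8.3}, by sandwiching the homogeneous Sobolev norms $\|\Lambda^{\alpha}f\|_{L^{p}}$ between homogeneous Besov norms using the elementary facts already collected in Lemmas \ref{lem3.1}--\ref{lem3.2}. The guiding observation is that $\Lambda^{\alpha}$ acts as an isomorphism shifting the regularity index of homogeneous Besov spaces, namely $\|\Lambda^{\alpha}f\|_{\dot{B}^{s}_{p,r}}\approx\|f\|_{\dot{B}^{s+\alpha}_{p,r}}$, so that the whole statement should reduce to specializing $p=q$ in Lemma \ref{lem8.3} and checking that the scaling relations match.

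First I would treat the generic case $0<\theta<1$. On the left-hand side, the embedding $\dot{B}^{0}_{q,1}\hookrightarrow L^{q}$ from Lemma \ref{lem3.2}(1) together with the homogeneous shift from Lemma \ref{lem3.1}(ii) gives
$$\|\Lambda^{k}f\|_{L^{q}}\lesssim\|\Lambda^{k}f\|_{\dot{B}^{0}_{q,1}}\approx\|f\|_{\dot{B}^{k}_{q,1}}.$$
Next I would apply Lemma \ref{lem8.3} with $p=q$: its scaling condition $k+n(\tfrac{1}{r}-\tfrac{1}{q})=m(1-\theta)+\varrho\theta$ coincides exactly with the hypothesis of Lemma \ref{lem8.4}, yielding
$$\|f\|_{\dot{B}^{k}_{q,1}}\lesssim\|f\|^{1-\theta}_{\dot{B}^{m}_{r,\infty}}\|f\|^{\theta}_{\dot{B}^{\varrho}_{r,\infty}}.$$
Finally, the two Besov factors are controlled by Sobolev norms by the same facts read in reverse: $\|f\|_{\dot{B}^{m}_{r,\infty}}\approx\|\Lambda^{m}f\|_{\dot{B}^{0}_{r,\infty}}\lesssim\|\Lambda^{m}f\|_{L^{r}}$ via $L^{r}\hookrightarrow\dot{B}^{0}_{r,\infty}$, and likewise with $\varrho$. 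Chaining the three displays produces the claimed estimate.

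The endpoint cases $\theta=0$ and $\theta=1$ lie outside the range $0<\theta<1$ in which Lemma \ref{lem8.3} is stated, so they must be handled separately, and this is where I expect the genuine care to be required. When $\theta=0$ the scaling relation collapses to $k+n(\tfrac1r-\tfrac1q)=m$ with $r\le q$, and the desired bound is precisely the homogeneous Sobolev embedding $\|\Lambda^{k}f\|_{L^{q}}\lesssim\|\Lambda^{m}f\|_{L^{r}}$; the case $\theta=1$ is symmetric. One cannot close these endpoints through the Besov chain above, because controlling $\|\Lambda^{m}f\|_{\dot{B}^{0}_{r,1}}$ by $\|\Lambda^{m}f\|_{L^{r}}$ would require the embedding $\dot{B}^{0}_{r,1}\hookrightarrow L^{r}$ in the \emph{wrong} direction. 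Instead I would invoke the classical homogeneous Sobolev (Hardy--Littlewood--Sobolev) embedding directly. Thus the only delicate point is the endpoint bookkeeping: the interior regime is purely mechanical once $p=q$ is fixed in Lemma \ref{lem8.3}, and the task is to confirm that the single scaling condition on $(k,m,\varrho,\theta,r,q)$ is propagated consistently through the left embedding, the interpolation step, and the right embeddings, with the endpoints supplied by the standard Sobolev inequality.
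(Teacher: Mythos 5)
Your proof follows essentially the same route as the paper's: bound $\|\Lambda^{k}f\|_{L^{q}}$ by $\|f\|_{\dot{B}^{k}_{q,1}}$ (Minkowski plus Bernstein, i.e.\ the embedding $\dot{B}^{0}_{q,1}\hookrightarrow L^{q}$ combined with the homogeneous shift), apply Lemma \ref{lem8.3} with $p=q$, and close with $\|f\|_{\dot{B}^{m}_{r,\infty}}\approx\|\Lambda^{m}f\|_{\dot{B}^{0}_{r,\infty}}\lesssim\|\Lambda^{m}f\|_{L^{r}}$ via $L^{r}\hookrightarrow\dot{B}^{0}_{r,\infty}$. Your separate handling of the endpoints $\theta=0$ and $\theta=1$, where Lemma \ref{lem8.3} does not apply and one must fall back on the classical homogeneous Sobolev embedding, is a legitimate point of care that the paper's own proof silently glosses over.
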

\begin{proof}
Indeed, it is the direct consequence of Lemma \ref{lem8.3}.
Due to the homogeneous decomposition $f=\sum_{q\in \mathbb{Z}}\dot{\Delta}_{q}f$, we have
\begin{eqnarray}
\|\Lambda^{k}f\|_{L^{q}}&=&\|\sum_{q\in \mathbb{Z}}\dot{\Delta}_{q}\Lambda^{k}f\|_{L^{q}}\leq\sum_{q\in \mathbb{Z}}\|\dot{\Delta}_{q}\Lambda^{k}f\|_{L^{q}}\nonumber\\ &\approx& \sum_{q\in \mathbb{Z}} 2^{qk}\|\dot{\Delta}_{q}f\|_{L^{q}}=\|f\|_{\dot{B}^{k}_{q,1}},\label{R-E149}
\end{eqnarray}
where we have used the Minkowski's inequality and Bernstein's inequality (see Lemma \ref{lem3.1}).

In the other hand, note that the embedding $L^{r}\hookrightarrow\dot{B}^{0}_{r,\infty}(1\leq r\leq\infty)$ and
the Bernstein's inequality, we have
\begin{eqnarray}
\|f\|_{\dot{B}^{m}_{r,\infty}}\approx\|\Lambda^{m}f\|_{\dot{B}^{0}_{r,\infty}}\lesssim \|\Lambda^{m}f\|_{_{L^r}}. \label{R-E150}
\end{eqnarray}
Hence, (\ref{R-E148}) is followed from Lemma \ref{lem8.3} immediately.
\end{proof}

\begin{lem}\label{lem8.5}
Suppose that $\varrho>0$ and $1\leq p<2$. One has
\begin{eqnarray}
\|f\|_{\dot{B}^{-\varrho}_{r,\infty}}\lesssim \|f\|_{L^{p}} \label{R-E151}
\end{eqnarray}
with $1/p-1/r=\varrho/n$. In particular, this holds with $\varrho=n/2, r=2$ and $p=1$.
\end{lem}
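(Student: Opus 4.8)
The plan is to prove the stated embedding $L^{p}\hookrightarrow\dot{B}^{-\varrho}_{r,\infty}$ by reducing it to two of the basic facts already collected in Lemma \ref{lem3.2}: the endpoint embedding $L^{p}\hookrightarrow\dot{B}^{0}_{p,\infty}$ of Lemma \ref{lem3.2}(1), and the gain-of-integrability embedding $\dot{B}^{s}_{p,r}\hookrightarrow\dot{B}^{s-n(1/p-1/\tilde{p})}_{\tilde{p},r}$ for $p\leq\tilde{p}$ of Lemma \ref{lem3.2}(4). Since $\varrho>0$ and $1/p-1/r=\varrho/n$, one has $1/p>1/r$, hence $p<r$, so the second embedding applies with target integrability $\tilde{p}=r$.

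First I would record the embedding chain. Applying Lemma \ref{lem3.2}(4) with smoothness $s=0$, summability index $\infty$, and target integrability $\tilde{p}=r\geq p$ gives $\dot{B}^{0}_{p,\infty}\hookrightarrow\dot{B}^{-n(1/p-1/r)}_{r,\infty}$. Because $n(1/p-1/r)=\varrho$ by hypothesis, this is exactly $\dot{B}^{0}_{p,\infty}\hookrightarrow\dot{B}^{-\varrho}_{r,\infty}$. Composing with $L^{p}\hookrightarrow\dot{B}^{0}_{p,\infty}$ yields $\|f\|_{\dot{B}^{-\varrho}_{r,\infty}}\lesssim\|f\|_{\dot{B}^{0}_{p,\infty}}\lesssim\|f\|_{L^{p}}$, which is the claim. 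The distinguished case is then immediate: taking $p=1$ and $r=2$ gives $\varrho=n(1-1/2)=n/2$, recovering $L^{1}\hookrightarrow\dot{B}^{0}_{1,\infty}\hookrightarrow\dot{B}^{-n/2}_{2,\infty}$ as asserted.

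Alternatively, and this is essentially Lemma \ref{lem3.2}(4) unpacked at the level of a single dyadic block, I would argue directly from Bernstein's inequality (Lemma \ref{lem3.1}). Each block $\dot{\Delta}_{q}f=\Phi_{q}*f$ has Fourier support in the annulus $A_{q}\subset\{|\xi|\lesssim 2^{q}\}$, so part (i) of Lemma \ref{lem3.1} with exponents $p\leq r$ and $\alpha=0$ gives $\|\dot{\Delta}_{q}f\|_{L^{r}}\lesssim 2^{qn(1/p-1/r)}\|\dot{\Delta}_{q}f\|_{L^{p}}=2^{q\varrho}\|\dot{\Delta}_{q}f\|_{L^{p}}$. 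Moreover $\|\dot{\Delta}_{q}f\|_{L^{p}}=\|\Phi_{q}*f\|_{L^{p}}\leq\|\Phi_{q}\|_{L^{1}}\|f\|_{L^{p}}=\|\Phi_{0}\|_{L^{1}}\|f\|_{L^{p}}$ by Young's inequality together with the scaling identity $\Phi_{q}(x)=2^{qn}\Phi_{0}(2^{q}x)$, which makes $\|\Phi_{q}\|_{L^{1}}$ independent of $q$. Combining these, $2^{-q\varrho}\|\dot{\Delta}_{q}f\|_{L^{r}}\lesssim\|f\|_{L^{p}}$ uniformly in $q\in\mathbb{Z}$, and taking the supremum over $q$ gives exactly $\|f\|_{\dot{B}^{-\varrho}_{r,\infty}}\lesssim\|f\|_{L^{p}}$.

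There is no serious obstacle here, as the result is a standard consequence of scaling; the only two points needing a line of care are (a) checking that the hypothesis $1/p-1/r=\varrho/n$ matches the exponent produced by Bernstein's inequality, so that the factors $2^{q\varrho}$ and $2^{-q\varrho}$ cancel exactly, and (b) the $q$-uniformity of the $L^{1}$-norm of the convolution kernel, which is what lets the supremum defining the $\dot{B}^{-\varrho}_{r,\infty}$-norm be controlled. I would favor the embedding-chain argument for brevity, since both ingredients are already stated in Lemma \ref{lem3.2}, and would cite the Bernstein computation only as the underlying mechanism.
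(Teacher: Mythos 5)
Your proposal is correct, and your second argument --- Bernstein's inequality on each dyadic block together with Young's inequality $\|\dot{\Delta}_{q}f\|_{L^{p}}\leq\|\Phi_{q}\|_{L^{1}}\|f\|_{L^{p}}$ (with $\|\Phi_{q}\|_{L^{1}}$ uniform in $q$), followed by taking the supremum over $q\in\mathbb{Z}$ --- is precisely the paper's own proof of Lemma \ref{lem8.5}. The embedding-chain version you favor is the same mechanism repackaged through Lemma \ref{lem3.2}(1) and (4), so both routes match the paper's approach.
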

\begin{proof}
It follows from Bernstein's inequality and Young's inequality that
\begin{eqnarray}
2^{q\frac{n}{r}}\|\dot{\Delta}_{q}f\|_{L^r}\lesssim 2^{q\frac{n}{p}}\|\dot{\Delta}_{q}f\|_{L^p}\lesssim 2^{q\frac{n}{p}}\|f\|_{L^p} \label{R-E152}
\end{eqnarray}
for all $q\in \mathbb{Z}$, which implies (\ref{R-E151}) directly.
\end{proof}

\section*{Acknowledgments}
The authors are very grateful to Prof. R.-J Duan for his
warm communication. The first author (J. Xu) is partially supported by the Program for New Century Excellent
Talents in University (NCET-13-0857), Special Foundation of China Postdoctoral
Science Foundation (2012T50466) and the NUAA Fundamental
Research Funds (NS2013076). He would like to thank Prof. Kawashima
for giving him an opportunity to work at Kyushu University.  The work is also partially supported by
Grant-in-Aid for Scientific Researches (S) 25220702 and (A) 22244009.

\end{document}